\documentclass{amsart}
\usepackage{amssymb}
\usepackage{bbm}
\usepackage{mathtools}
\usepackage[all,2cell]{xy}
\usepackage{tikz}
\usetikzlibrary{matrix,arrows,decorations.markings, calc, positioning}

\usepackage[colorlinks,citecolor=blue,linkcolor=black]{hyperref}
\usepackage[nameinlink,capitalize]{cleveref}
%%%%
\DeclareSymbolFont{bbold}{U}{bbold}{m}{n}
\DeclareSymbolFontAlphabet{\mathbbold}{bbold}

\usepackage[shortcuts]{extdash}

%%%%%%%Macros%%%%%%%%

%%%%Redefining box product symbols to be in the binary operator category 
\DeclareMathSymbol{\ssquare}{\mathbin}{AMSa}{"03}
\DeclareMathSymbol{\square}{\mathbin}{AMSa}{"03}
\DeclareMathSymbol{\Box}{\mathbin}{AMSa}{"03}

%%%Functors

\newcommand{\coTHH}{\mathrm{coTHH}}
\newcommand{\THH}{\mathrm{THH}}
\newcommand{\sma}{\wedge}
\newcommand{\sm}{\wedge}
\newcommand{\coHH}{\mathrm{coHH}}
\newcommand{\HH}{\mathrm{HH}}
\newcommand{\cotor}{\mathrm{Cotor}}
\DeclareMathOperator{\Tot}{Tot}
\DeclareMathOperator{\coker}{coker}

%%%Categories
\newcommand{\Spec}{\mathrm{Spec}}
\newcommand{\Set}{\mathrm{Set}}
\newcommand{\sSet}{\mathrm{sSet}}
\newcommand{\cC}{\mathrm{CoCAlg}}
\newcommand{\CAlg}{\mathrm{CAlg}}
\newcommand{\Mod}{\mathrm{Mod}}
\newcommand{\sset}{\sSet}
\newcommand{\CoMod}{\mathrm{BiCoMod}}
\newcommand{\Cat}{\mathrm{Cat}}
\newcommand{\Top}{\mathrm{Top}}

%%%Misc
\newcommand{\bbone}{\mathbbold{1}}
\newcommand{\op}{\mathrm{op}}
\newcommand{\Hom}{\mathrm{Hom}}

\newcommand{\inv}{{-1}}
\newcommand{\xto}[1]{\xrightarrow{#1}}
\newcommand{\sh}{\mathrm{sh}}

\newcommand{\AW}{\mathrm{AW}}

\newcommand{\comult}{\triangle}

\newcommand{\Map}{\mathrm{Map}}

\newcommand{\id}{\mathrm{id}}
\newcommand{\boxhopf}[1]{\square_{#1}\textup{-Hopf algebra}}
\newcommand{\boxcoalg}[1]{\square_{#1}\textup{-coalgebra}}

\newcommand{\boxbialg}[1]{\square_{#1}\textup{-bialgebra}}

\newcommand{\iso}{\cong}

\newcommand{\freeloops}{\mathcal{L}}

%%%%%%%%%%%%%Theorem environments

\theoremstyle{plain}   % This is the default, anyway
\newtheorem{thm}{Theorem}[section] % numbered theorem
\newtheorem{cor}[thm]{Corollary}
\newtheorem{lemma}[thm]{Lemma}
\newtheorem{prop}[thm]{Proposition}
\crefname{prop}{Proposition}{Propositions}
\newtheorem{thrm}[thm]{Theorem}

\theoremstyle{definition}
\newtheorem{defn}[thm]{Definition}
\crefname{defn}{Definition}{Definitions}

\theoremstyle{remark}
\newtheorem{rem}[thm]{Remark}
\newtheorem{ex}[thm]{Example}
\newtheorem{observation}[thm]{Observation}

%%%%%%%%%%%%%Make extensible double arrows
\makeatletter
\newcommand*{\relrelbarsep}{.386ex}
\newcommand*{\relrelbar}{%
  \mathrel{%
    \mathpalette\@relrelbar\relrelbarsep
  }%
}
\newcommand*{\@relrelbar}[2]{%
  \raise#2\hbox to 0pt{$\m@th#1\relbar$\hss}%
  \lower#2\hbox{$\m@th#1\relbar$}%
}
\providecommand*{\rightrightarrowsfill@}{%
  \arrowfill@\relrelbar\relrelbar\rightrightarrows
}
\providecommand*{\leftleftarrowsfill@}{%
  \arrowfill@\leftleftarrows\relrelbar\relrelbar
}
\providecommand*{\xrightrightarrows}[2][]{%
  \ext@arrow 0359\rightrightarrowsfill@{#1}{#2}%
}
\providecommand*{\xleftleftarrows}[2][]{%
  \ext@arrow 3095\leftleftarrowsfill@{#1}{#2}%
}
\makeatother

\UseAllTwocells

%%%%%%%%%%%%Data for title
\title[CoTHH and the Homology of Free Loop Spaces]{Topological coHochschild Homology and the Homology of Free Loop Spaces}
\author[Bohmann]{Anna Marie Bohmann}
\author[Gerhardt]{Teena Gerhardt}
\author[Shipley]{Brooke Shipley}

\address{Department of Mathematics, Vanderbilt University, 1326 Stevenson Center, Nashville, TN, 37240, USA}
\email{am.bohmann@vanderbilt.edu}

\address{Department of Mathematics, Michigan State University, 619 Red Cedar Road, East Lansing, MI, 48824, USA}
\email{teena@math.msu.edu}

\address{Department of Mathematics, Statistics, and Computer Science, University of Illinois at
Chicago, 508 SEO m/c 249,
851 S. Morgan Street,
Chicago, IL, 60607-7045, USA}
    \email{shipleyb@uic.edu}

\date{\today}

\makeatletter
\@namedef{subjclassname@2020}{\textup{2020} Mathematics Subject Classification}
\makeatother

\keywords{Topological Hochschild homology, coalgebra, free loop spaces}
\subjclass[2020]{Primary:{55P35, 16T15; Secondary: 55P43, 13D03, 16T05, 55T99}}

\begin{document}

\begin{abstract}
We study the homology of free loop spaces via techniques arising from the theory of topological coHochschild homology (coTHH).  Topological coHochschild homology is a topological analogue of the classical theory of coHochschild homology for coalgebras.  We produce new spectrum-level structure on coTHH of suspension spectra as well as new algebraic structure in the coB\"okstedt spectral sequence for computing coTHH.  These new techniques allow us to compute the homology of free loop spaces in several new cases, extending known calculations.
\end{abstract}
\maketitle

 \section{Introduction}

Given a space $X$, the free loop space $\freeloops X$ is the space of all maps from the circle $S^1$ into $X$. The (co)homology of free loop spaces has been an active area of research for many decades. Interest in this area stems from a number of important applications of free loop spaces to topology and mathematical physics. For example, a classical theorem of Gromoll and Meyer \cite{GrMe69} ties the homology of free loop spaces to the enumeration of closed geodesics on manifolds. The homology of free loop spaces is also the main object of study in the field of string topology \cite{ChasSullivan, CJY}.  

One of the primary strategies for understanding the (co)homology of free loop spaces is to relate it to Hochschild homology \cite{Go85, BuFi86, Men01}.  In this paper, we use the dual construction, coHochschild homology, and the emerging theory of topological coHochschild homology to develop new tools for calculating the homology of free loop spaces. We also describe new algebraic structure on the homology of free loop spaces. 

Recall that topological Hochschild homology (THH) is an analogue for ring spectra of the classical theory of Hochschild homology for algebras. Similarly, the classical theory of coHochschild homology for coalgebras \cite{doi, brzeznski-wisbauer} has a topological analogue for coalgebras in spectra, called topological coHochschild homology (coTHH). The foundations of coTHH have been developed in work of Hess and Shipley \cite{hs.coTHH} as well as work of Bohmann, Gerhardt, H\o genhaven, Shipley, and Ziegenhagen \cite{BGHSZ}. 

Some important examples of coalgebras in spectra are given by suspension spectra of spaces. Indeed, for a space $X$, the diagonal map on $X$ induces a comultiplication 
\[
\triangle\colon \Sigma_+^{\infty}X \to \Sigma_+^{\infty}X \wedge \Sigma_+^{\infty}X. 
\]
which makes $\Sigma^\infty_+X$ into a coalgebra. For $X$ a simply connected space,  $\coTHH( \Sigma_+^{\infty}X)$ can be identified with the suspension spectrum of the free loop space on $X$,
\[
\coTHH( \Sigma_+^{\infty}X) \simeq \Sigma_+^{\infty}\freeloops X.
\] 
Thus tools to study topological coHochschild homology in particular yield information about free loop spaces. 

For topological Hochschild homology, one of the primary computational tools is the B\"okstedt spectral sequence, which relates the homology of THH with the algebraic theory of Hochschild homology. For a field $k$ and a ring spectrum $R$, this has the form
\[
E^2_{*,*} = \HH_*(H_*(R;k)) \Rightarrow H_*(\THH(R);k). 
\]
In prior work, the authors and collaborators constructed an analogous coB\"okstedt spectral sequence computing the homology of coTHH \cite{BGHSZ}. For a field $k$ and a coalgebra spectrum $C$, this spectral sequence has $E_2$-term
\[
E_2^{s,t} = \coHH_{s,t}(H_*(C;k))
\]
and abuts to $H_{t-s}(\coTHH(C);k)$. Here $\coHH$ is the classical coHochschild homology for coalgebras of Doi \cite{doi}. In the case of suspension spectra of simply connected spaces, the coB\"okstedt spectral sequence converges if a Mittag-Leffler condition is satisfied, and gives computational tools to study the homology of free loop spaces:
\[
H_*(\coTHH( \Sigma_+^{\infty}X); k) \cong H_*(\freeloops X; k).
\]

The current paper studies the algebraic structure in the coB\"okstedt spectral sequence, with an eye towards applications to the homology of free loop spaces. Work of Angeltveit and Rognes \cite{AR} shows that under appropriate flatness conditions, the B\"okstedt spectral sequence computing $H_*(\THH(R);k)$  is a spectral sequence of $H_*(R;k)$-Hopf algebras. One might then expect a dual algebraic structure in the coB\"okstedt spectral sequence. This is more subtle than it seems, however, because such an algebraic structure has a multiplication and comultiplication over the coalgebra $H_*(C;k)$, rather than over a ring. In this paper we define the appropriate Hopf algebra-like structure over a coalgebra $D$, which we call a $\Box_D$-Hopf algebra. A $\Box_D$-Hopf algebra $H$ is a $D$-bicomodule with appropriately compatible multiplication and comultiplication maps
\[
\mu\colon H \Box_D H \to H \qquad \text{and} \qquad \triangle\colon H \to H \Box_DH,
\]
where $\Box_D$ denotes the cotensor product over the coalgebra $D$. 

These new constructions allow us to describe new algebraic structure on the homology of free loop spaces.

\begin{thrm}
For $X$ a simply connected space and $k$ a field, if $H_*(\freeloops X;k)$ is coflat as a comodule over $H_*(X;k)$, then $H_*(\freeloops X;k)$ is a $\Box_{H_*(X;k)}$-Hopf algebra. 
\end{thrm}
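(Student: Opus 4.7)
The plan is to realize the required structure maps geometrically on the free loop space and then transport them to homology using the coflatness hypothesis. The $D := H_*(X;k)$-comodule structure on $H := H_*(\freeloops X;k)$ is induced by the evaluation map $\mathrm{ev}\colon \freeloops X \to X$ and requires no flatness. For the multiplication and comultiplication, I would exploit the cogroup structure on $S^1$: the pinch map $S^1\to S^1\vee S^1$ induces loop concatenation
\[
\mu_{\mathrm{top}}\colon \freeloops X \times_X \freeloops X \cong \Map(S^1\vee S^1, X) \longrightarrow \Map(S^1,X) = \freeloops X,
\]
while the fold map $S^1\vee S^1\to S^1$ induces the diagonal over $X$,
\[
\Delta_{\mathrm{top}}\colon \freeloops X \longrightarrow \freeloops X \times_X \freeloops X.
\]

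To turn these into $\Box_D$-operations I would invoke the Eilenberg--Moore spectral sequence for the homotopy pullback $\freeloops X \times_X \freeloops X$, whose $E_2$-page is $\cotor_D(H,H)$. Under the coflatness hypothesis it collapses to a K\"unneth-type isomorphism
\[
H_*(\freeloops X \times_X \freeloops X;k) \;\cong\; H\,\Box_D\,H.
\]
Applying $H_*(-;k)$ to $\mu_{\mathrm{top}}$ and $\Delta_{\mathrm{top}}$ and composing with this isomorphism then produces the required $\mu\colon H\,\Box_D\,H \to H$ and $\Delta\colon H \to H\,\Box_D\,H$. A unit and counit arise from the constant-loop section $X\hookrightarrow \freeloops X$ together with evaluation, and an antipode comes from loop reversal $\gamma \mapsto \bar\gamma$.

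The $\Box_D$-Hopf algebra axioms should then follow by naturality from standard homotopical facts about $S^1$: associativity and coassociativity from the homotopy-(co)associativity of pinch and fold, the (co)unit identities from the constant-loop section, and the antipode axiom from the fact that $\gamma * \bar\gamma$ is null-homotopic relative to $X$. The main obstacle will be the bialgebra compatibility between $\mu$ and $\Delta$: because both sides of this identity pass through the Eilenberg--Moore comparison more than once, one must check that coflatness of $H$ over $D$ also yields a clean identification of the homology of the iterated fibre product $\freeloops X \times_X \freeloops X \times_X \freeloops X$ with the iterated cotensor product $H\,\Box_D\,H\,\Box_D\,H$, and that these isomorphisms are natural enough to transport the diagrammatic identities among pinch and fold on $S^1$ into the defining equations of a $\Box_D$-Hopf algebra.
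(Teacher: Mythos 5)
Your proposal is correct in outline and rests on the same two pillars as the paper's argument --- the co-H structure of $S^1$ (pinch/fold/flip) supplying the structure maps, and a $\cotor$-type spectral sequence collapsing under coflatness to identify the homology of a fibre product over $X$ with a cotensor product over $H_*(X;k)$ --- but you implement it at the space level rather than the spectrum level. The paper instead first proves a spectrum-level statement (\cref{prop:suspspectrum_boxcoalgspectrallevel}): $\coTHH(\Sigma^\infty_+X)$ is a Hopf monoid in the homotopy category of cocommutative coalgebra spectra over $\Sigma^\infty_+X$, with $\Box_{\Sigma^\infty_+X}$ realized as the pullback in the $\infty$-category of coalgebra spectra and the structure maps obtained by cotensoring $\Sigma^\infty_+X$ with the fold map, the pinch map on a double-circle model $dS^1_\bullet$, and the flip map (plus a triple circle for the antipode axiom); the corollary then follows by smashing with $Hk$, using stability to convert the defining pullback into a pushout and back, and invoking \cref{prop-coflat}, whose cobar/Bousfield--Kan spectral sequence with $E_2=\cotor_{\pi_*C}(\pi_*A,\pi_*B)$ is precisely the algebraic shadow of your Eilenberg--Moore argument. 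What your route buys is concreteness: $\Map(S^1\vee S^1,X)\cong\freeloops X\times_X\freeloops X$ on the nose, the pinch map is an honest continuous map (no simplicial double-circle model needed), and the homology EMSS for the simply connected base $X$ converges with field coefficients, so the collapse under coflatness is legitimate and its edge map is natural, which is what you need to transport the axioms. What the paper's route buys is the intermediate spectrum-level Hopf monoid structure, which is of independent interest and feeds into the rest of the paper. Your flagged worry about the bialgebra compatibility is the right one to have, and it resolves as you suspect: since $H_*(\freeloops X;k)$ is coflat over $H_*(X;k)$, the EMSS for the iterated pullback $(\freeloops X\times_X\freeloops X)\times_X\freeloops X$ again collapses, giving the identification with the threefold cotensor product; this is the same bookkeeping the paper does implicitly when it says the remaining structure maps are treated ``likewise.''
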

We obtain this algebraic result as the consequence of spectrum-level structure arising in the $\infty$-category of coalgebra spectra over the suspension spectrum $\Sigma^\infty_+X$, which is discussed in \cref{prop:suspspectrum_boxcoalgspectrallevel}.

We then prove the following result capturing the algebraic structure in the coB\"okstedt spectral sequence:

\begin{thrm}
Let $C$ be a connected cocommutative coalgebra spectrum. The coB\"okstedt spectral sequence is a spectral sequence of  $\Box_{H_*(C;k)}$-coalgebras. Further, if for each $r\geq 2$, $E_r^{*,*}(C)$ is coflat over $H_*(C;k)$, then the coB\"okstedt spectral sequence is a spectral sequence of $\Box_{H_*(C;k)}$-Hopf algebras.
\end{thrm}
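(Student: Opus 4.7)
The plan is to dualize the Angeltveit--Rognes argument \cite{AR} for the multiplicative B\"okstedt spectral sequence, working directly with the cosimplicial cobar construction $C^{\bullet}$ (where $C^n = C^{\sma(n+1)}$) whose totalization computes $\coTHH(C)$ and whose canonical filtration produces the coB\"okstedt spectral sequence. Cocommutativity of $C$ equips $C^\bullet$ with additional structure, which I will use to induce the $\Box_{H_*(C;k)}$-coalgebra structure page-by-page and, under coflatness, to promote it to a $\Box_{H_*(C;k)}$-Hopf algebra structure.

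For the coalgebra part, begin at the $E_2$-page. For any cocommutative coalgebra $D$, the classical coHochschild object $\coHH_{*,*}(D)$ is naturally a $\Box_D$-coalgebra: the comultiplication is defined cosimplicially by applying the comultiplication of $D$ diagonally across each cosimplicial degree. Specialized to $D = H_*(C;k)$, this supplies the $\Box_{H_*(C;k)}$-coalgebra structure on $E_2$. To promote this to every page, I would lift the diagonal construction to the cosimplicial spectrum $C^{\bullet}$ itself, producing a cosimplicial map that models the spectrum-level comultiplication $\Delta\colon\coTHH(C)\to\coTHH(C)\Box_C\coTHH(C)$ while respecting the filtration. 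This induces a map of spectral sequences $E_r^{*,*}\to E_r^{*,*}\Box_{H_*(C;k)}E_r^{*,*}$ on every page and forces each $d_r$ to be a coderivation. Since the comultiplication maps into a cotensor product rather than out of one, no exactness hypothesis on $\Box_{H_*(C;k)}$ is needed at this stage.

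For the Hopf algebra part, one must additionally realize the multiplication $\mu\colon\coTHH(C)\Box_C\coTHH(C)\to\coTHH(C)$ page-by-page. This requires identifying the spectral sequence of the filtered object $\coTHH(C)\Box_C\coTHH(C)$ with $E_r^{*,*}(C)\Box_{H_*(C;k)}E_r^{*,*}(C)$, a K\"unneth-type statement dual to the one used in \cite{AR}. Coflatness of each $E_r^{*,*}(C)$ over $H_*(C;k)$ makes the functor $E_r^{*,*}\Box_{H_*(C;k)}(-)$ exact, so the cotensor of spectral sequences is well-behaved and matches the spectral sequence associated to a bicosimplicial model of $\coTHH(C)\Box_C\coTHH(C)$. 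Granted this identification, $\mu$ descends to a multiplication on every $E_r^{*,*}$, and the Hopf compatibility with $\Delta$ follows by naturality from the cosimplicial level.

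The main obstacle will be the K\"unneth-type identification of the spectral sequence of the cotensored cosimplicial object. Concretely, I would construct a bicosimplicial resolution of $\coTHH(C)\Box_C\coTHH(C)$ and show that its two partial totalizations collapse, under the coflatness hypothesis, to identify the $E_r$-page with $E_r^{*,*}\Box_{H_*(C;k)}E_r^{*,*}$. Once this is in hand, verifying the $\Box_{H_*(C;k)}$-Hopf axioms on every page reduces to algebraic bookkeeping using the cocommutativity of $C$, and the connectedness assumption ensures the convergence and completeness needed for these spectral-sequence comparisons.
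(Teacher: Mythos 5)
Your overall strategy---dualizing Angeltveit--Rognes, inducing the comultiplication from cosimplicial-level structure without any exactness hypothesis, and reserving the coflatness/K\"unneth argument for the multiplicative structure---matches the paper's route, and your identification of the key K\"unneth-type comparison (that the spectral sequence of the cotensored cosimplicial object agrees with $E_r^{*,*}\Box_{H_*(C;k)}E_r^{*,*}$, proved by induction on $r$ using coflatness and \cref{thrm:boxkunneth}) is exactly the right technical heart of the multiplicative part. However, there is a genuine gap in where the multiplication comes from. You write as if a spectrum-level map $\mu\colon \coTHH(C)\Box_C\coTHH(C)\to\coTHH(C)$ already exists and merely needs to be ``realized page-by-page,'' but for a general cocommutative coalgebra spectrum $C$ no such map is constructed (the paper only obtains one for suspension spectra, and even the object $\coTHH(C)\Box_C\coTHH(C)$ is delicate because totalization does not commute with smash products). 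The multiplication must instead be produced from a simplicial pinch map, and there is no simplicial pinch map $S^1_\bullet\to S^1_\bullet\vee S^1_\bullet$ on the standard simplicial circle. One must replace $S^1_\bullet$ by the double circle $dS^1_\bullet$, use the pinch $\psi\colon dS^1_\bullet\to S^1_\bullet\vee S^1_\bullet$ to get a map of spectral sequences $D_r^{*,*}\to dE_r^{*,*}$, and then prove that the collapse $\pi\colon dS^1_\bullet\to S^1_\bullet$ induces an isomorphism $E_r^{*,*}\to dE_r^{*,*}$ for $r\geq 2$ (this is \cref{lemma:pi}, an injective-resolution/cotor argument over $D\otimes D$). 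The multiplication is then the zigzag $\mu_r=\pi^{-1}\psi\phi_r^{-1}$. Your ``bicosimplicial resolution'' does not supply the pinch map or this comparison, and without it the multiplicative structure never materializes.

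The second gap is the antipode: a $\Box_D$-Hopf algebra in the sense of \cref{defn:boxhopf} requires an explicit antipode $\chi$ satisfying the pentagon-type diagram, and your proposal stops at the bialgebra compatibility (``algebraic bookkeeping''). The paper constructs $\chi_r$ as $\pi^{-1}\circ\chi'\circ\pi$ from the simplicial flip map on $dS^1_\bullet$, and verifies the antipode axiom by cotensoring with a homotopy-commutative diagram of simplicial sets that requires yet a third model, the triple circle $tS^1_\bullet=\partial\Delta^2$. Since no automatic-antipode theorem for $\Box_D$-bialgebras is invoked or available here, omitting this construction leaves the Hopf claim unproved. A smaller point: for the comultiplication you should also say how the map of spectral sequences $E_r^{*,*}\to D_r^{*,*}$ (for $C^{S^1_\bullet\vee S^1_\bullet}\sma Hk$) lands in $E_r^{*,*}\Box_{H_*(C;k)}E_r^{*,*}$; the paper gets this by exhibiting $S^1_\bullet\vee S^1_\bullet$ as a coequalizer of $S^1_\bullet\amalg\ast\amalg S^1_\bullet\rightrightarrows S^1_\bullet\amalg S^1_\bullet$ and using the Bousfield--Kan shuffle pairing to see that $D_r^{*,*}\to E_r^{*,*}\otimes_k E_r^{*,*}$ equalizes the two coaction maps, hence factors through the cotensor---this step, while not needing coflatness, is not automatic from ``applying the diagonal.''
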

This algebraic structure in the coB\"okstedt spectral sequence facilitates new computations of $H_*(\coTHH(C);k)$, and in particular of the homology of free loop spaces $H_*(\freeloops X;k)$.

In this work, we illustrate the power of this structure on the spectral sequence by considering the homology of free loop spaces on simply connected spaces $X$ with certain cohomology rings. We consider in detail the homology of  $\freeloops X$ when $X$ is a simply connected space with exterior cohomology. The (co)homology of $\freeloops X$ in such cases has been considered, for instance, in \cite{KY97}, \cite{Kuribayashi11}, and \cite{KMN14}. Our approach yields new results.
 
 \begin{thm}\label{exteriorloop1}
Let $k$ be a field of characteristic $p$ and let $X$ be a simply connected space whose cohomology is exterior on a finite number of generators
\[
H^*(X; k) \cong  \Lambda_k(x_{i_1}, x_{i_2}, \ldots x_{i_n}),
\]
where the $x_{i_j}$ are generators in odd degrees, $|x_{i_j}|=i_j$, and  $i_{j+1} \geq i_j$. Then when  $\frac{i_n + \sum_{j=1}^n i_j}{i_1-1}\leq p$, the homology of the free loop space on $X$ is given as a graded $k$-module by
\[
H_*(\freeloops X; k) \cong \Lambda_{k}(y_{i_1}, y_{i_2}, \ldots y_{i_n}) \otimes k[w_{i_1}, w_{i_2}, \ldots w_{i_n}],
\]
where $|y_{i_j}| = i_j$, and $|w_{i_j}| = i_j - 1$. 
\end{thm}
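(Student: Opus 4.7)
The plan is to compute $H_*(\freeloops X;k)$ via the coB\"okstedt spectral sequence and force it to collapse at $E_2$ using the $\Box_{H_*(X;k)}$-Hopf algebra structure established earlier in the paper. Since $X$ is simply connected, $\coTHH(\Sigma^\infty_+ X) \simeq \Sigma^\infty_+ \freeloops X$, and we have the coB\"okstedt spectral sequence
\[
E_2^{s,t} = \coHH_{s,t}(H_*(X;k)) \Rightarrow H_{t-s}(\freeloops X; k),
\]
which converges via Mittag-Leffler (each total degree has finite-dimensional $E_2$ since $i_1 \geq 3$). The coalgebra $H_*(X;k)$, linear dual to $\Lambda_k(x_{i_1}, \ldots, x_{i_n})$, is the primitively generated exterior coalgebra on the dual odd-degree generators. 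Using a K\"unneth decomposition for $\coHH$ of tensor products of cocommutative coalgebras and the cobar computation $\coHH(\Lambda^c(y)) \cong \Lambda^c(y) \otimes k[w]$ for a single primitive odd-degree generator (where $w = s^{-1}y$ has bidegree $(1, |y|)$; the shuffle product on the cobar complex makes $k[w]$ the polynomial Hopf algebra with $w$ primitive), we obtain
\[
E_2 \cong \Lambda(y_{i_1}, \ldots, y_{i_n}) \otimes k[w_{i_1}, \ldots, w_{i_n}]
\]
as a bigraded $\Box_D$-Hopf algebra with $D = H_*(X;k)$. Crucially, as a bigraded $k$-module $E_2$ already realizes the claimed answer.

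The main task is to show the spectral sequence collapses at $E_2$. By the preceding theorem on the $\Box_D$-Hopf algebra structure of the coB\"okstedt spectral sequence (applicable because $E_2 = D \otimes k[w_1, \ldots, w_n]$ is cofree and hence coflat over $D$, and similarly for later pages), each differential $d_r$ is a derivation of the multiplication and a coderivation of the comultiplication over $D$. The $y_{i_j}$'s sit in filtration zero inside $D$ and are permanent cycles, so by Leibniz it suffices to show $d_r(w_{i_j}) = 0$ for every $r \geq 2$ and every $j$. The coderivation property forces $d_r(w_{i_j})$ to be primitive in $E_r$ since $w_{i_j}$ is itself primitive. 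In characteristic $p$, the primitives of the polynomial Hopf algebra $k[w_1, \ldots, w_n]$ with each $w_\ell$ primitive are spanned not only by the $w_\ell$'s but also by all Frobenius powers $w_\ell^{p^a}$ for $a \geq 1$, since $\Delta(w_\ell^{p^a}) = \Delta(w_\ell)^{p^a} = 1 \otimes w_\ell^{p^a} + w_\ell^{p^a} \otimes 1$. Consequently the $\Box_D$-primitives of $E_2$ are spanned by elements $d \otimes w_\ell^{p^a}$ in bidegrees $(p^a, |d| + p^a i_\ell)$.

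For $d_r(w_{i_j}) \in E_r^{1+r,\, i_j + r - 1}$ to match such a primitive we require $p^a = 1+r$ and $|d| = i_j - 2 - p^a(i_\ell - 1) \geq 0$. The case $a = 0$ forces $r = 0$, outside our range; the case $a \geq 1$ requires $p(i_\ell - 1) \leq p^a(i_\ell - 1) \leq i_j - 2 \leq i_n - 2$. But the hypothesis $\frac{i_n + \sum_j i_j}{i_1 - 1} \leq p$ gives $p(i_\ell - 1) \geq p(i_1 - 1) \geq i_n + \sum_j i_j > i_n > i_j - 2$, a contradiction. Hence $d_r(w_{i_j}) = 0$ for every $r \geq 2$. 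Combined with Leibniz (and the identity $d_r(w^{p^a}) = p^a w^{p^a - 1} d_r(w) = 0$ in characteristic $p$), this gives $d_r = 0$ on all of $E_r$, so $E_\infty = E_2$, and no extension problems arise because the claimed graded $k$-module structure is already realized on $E_2$. The main obstacle is the correct identification of primitives of $E_2$ in characteristic $p$ — they include all Frobenius powers $w_\ell^{p^a}$, not just the $w_\ell$'s themselves — and seeing that the hypothesis on $p$ is precisely what excludes these Frobenius-power primitives from the relevant target bidegrees.
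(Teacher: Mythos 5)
There is a genuine gap at the step where you reduce the collapse to showing $d_r(w_{i_j})=0$. You write that the $y_{i_j}$ are permanent cycles in filtration zero, ``so by Leibniz it suffices to show $d_r(w_{i_j})=0$.'' This implicitly treats $E_2\cong \Lambda_k(y_{i_1},\dots,y_{i_n})\otimes k[w_{i_1},\dots,w_{i_n}]$ as a $k$-algebra on which $d_r$ is a derivation, so that $x\otimes w_{i_j}$ factors as $(x\otimes 1)\cdot(1\otimes w_{i_j})$. But the coB\"okstedt spectral sequence carries no $k$-algebra structure: the only multiplication available is the $\Box_{H_*(X;k)}$-product of \cref{thrm:algebrass}, and with respect to that product the element $x\otimes 1$ is not even in the augmentation ideal (the augmentation is $\epsilon\colon C\otimes D\to C$), while \emph{every} element $x\otimes w_{i_j}$ with $x\in\Lambda_k(y_{i_1},\dots,y_{i_n})$ is $\Box$-indecomposable --- this is exactly the content of \cref{prop:indecomposable}. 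So the Leibniz rule only reduces the collapse to the vanishing of $d_r$ on all of the classes $x\otimes w_{i_j}$, not just on $1\otimes w_{i_j}$, and your bidegree argument treats only the latter. The tell is that your final inequality uses only $p(i_1-1)>i_n-2$, which is much weaker than the stated hypothesis $\frac{i_n+\sum_j i_j}{i_1-1}\leq p$; if your reduction were valid, the theorem would hold under that weaker bound, which it does not.

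The gap is repairable along the lines you set up, and doing so recovers the paper's proof of \cref{exterior}. Each $x\otimes w_{i_j}$ is still $\Box_{H_*(X;k)}$-primitive by \cref{prop:primitive}, so your coLeibniz argument and your identification of the possible targets as sums of elements $d\otimes w_\ell^{p^a}$ (with $a\geq 1$ after discarding filtration reasons) go through verbatim for these sources. The source bidegree is now $(1,|x|+i_j)$ with $|x|\leq\sum_j i_j$, and the same comparison gives $p\leq p^a=\frac{|x|+i_j-2-|d|}{i_\ell-1}\leq\frac{i_n-2+\sum_j i_j}{i_1-1}<\frac{i_n+\sum_j i_j}{i_1-1}\leq p$, a contradiction; this is where the full hypothesis is actually consumed. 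The paper organizes this via \cref{prop:differentials} (first nonzero differential goes from a $\Box$-indecomposable to a primitive) and additionally invokes the $k$-coalgebra structure from \cite{BGHSZ} to restrict targets to $1\otimes w_\ell^{p^m}$, but that extra restriction is not needed for the degree count. Your $E_2$ identification, the observation about Frobenius-power primitives in characteristic $p$, and the convergence discussion all match the paper.
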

The case where $H^*(X; k)$ is exterior on two generators had been previously considered in work of Kuribayashi and Yamaguchi \cite{KY97}. \cref{exteriorloop1} extends their result in the two generator case to a broader range of degrees, while also treating the case of more than two generators.

\subsection{Organization} This paper is organized as follows. In \cref{sec:structures} we recall some classical foundations for coalgebras and introduce the new notion of a $\Box_D$-Hopf algebra over a coalgebra $D$. The definition of coHochschild homology for coalgebras is recalled in  \cref{sec:coHH}, and we prove in that section that the coHochschild homology of a coalgebra $D$ has the structure of a $\Box_D$-bialgebra. In \cref{sec:coTHH} we move to the topological setting, establishing the infinity categorical framework in which to discuss topological coHochschild homology, and defining coTHH. In \cref{sec:freeloop} we use the relationship between coTHH and free loop spaces to show that under coflatness conditions the homology of a free loop space has the structure of a $\Box$-Hopf algebra. In \cref{sect:thespectralsequence} we begin our analysis of the coB\"okstedt spectral sequence. We prove that under coflatness conditions the coB\"okstedt spectral sequence for computing $\coTHH(C)$ is a spectral sequence of $\Box$-Hopf algebras over the homology of $C$. Finally, in \cref{sect:computations} we use this new algebraic structure on the coB\"okstedt spectral sequence to make explicit computations of the homology of free loop spaces.

\subsection{Acknowledgments}  The authors express their gratitude to the organizers of the Women in Topology II Workshop and the Banff International Research Station, where this collaboration began.  We thank Vigleik Angeltveit, Ben Antieau, David Chan, Paul Goerss, Kathryn Hess, Sarah Klanderman, Maximilien P\'eroux, Emily Riehl, and Stephanie Ziegenhagen for helpful conversations related to this work. The authors also thank an anonymous referee for helpful comments. This research was supported by the National Science Foundation [DMS-1710534 and DMS-2104300 to Bohmann, DMS-1810575 to Gerhardt, and DMS-1811278 to Shipley]. Some of this work was done while the second author was in residence at the Mathematical Sciences Research Institute in Berkeley, CA (supported by the National Science Foundation under grant DMS-1440140) during the Spring 2020 semester. The first and third authors would like to thank the Isaac Newton Institute for Mathematical Sciences, Cambridge, for support and hospitality during the program `Homotopy harnessing higher structures' during which some of the work on this paper was carried out. Work during this program was supported by~EPSRC grant no~EP/K032208/1.

\section{(Co)algebraic structures}\label{sec:structures}

Hopf algebra structures arise naturally in the study of (topological) Hochschild homology. For a commutative ring spectrum $R$, $\THH(R)$ is a Hopf algebra in the homotopy category. Further, Angeltveit and Rognes prove that under appropriate flatness conditions, the B\"okstedt spectral sequence computing $H_*(\THH(R);k)$  is a spectral sequence of $H_*(R;k)$-Hopf algebras \cite{AR}. It is then natural to ask whether the coB\"okstedt spectral sequence computing $H_*(\coTHH(C);k)$ for a coalgebra spectrum $C$ is similarly a spectral sequence of Hopf algebras. However, the usual definition of a Hopf algebra over a ring is not the correct framework for this question. The coB\"okstedt spectral sequence should have a multiplication and comultiplication structure over $H_*(C;k)$, but $H_*(C;k)$ is a coalgebra, not a ring. To study the algebraic structure in the coB\"okstedt spectral sequence, one needs a notion of a Hopf algebra-like structure over a coalgebra $D$. In this section we define such a structure, which we call a  $\Box_D$-Hopf algebra.   

We begin by reviewing some standard definitions for coalgebras before arriving at our new constructions of $\Box_D$-algebras, coalgebras, bialgebras, and Hopf algebras. 

\begin{defn}
A \emph{coalgebra} $D$ over a field $k$ is a $k$-vector space along with $k$-linear maps
\[
\triangle\colon D \to D\otimes D \qquad \textup{and} \qquad \epsilon\colon D \to k,
\]
called the comultiplication and counit, such that the following coassociativity and counitality diagrams commute:
\[\xymatrix { D \ar[rr]^\triangle\ar[d]_\triangle& &D\otimes D\ar[d]^{\id\otimes \triangle}\\
D\otimes D\ar[r]^-{\triangle\otimes \id} &(D\otimes D)\otimes D & D\otimes(D\otimes D)\ar[l]_-{\cong}
}
\]
\[\xymatrix{ &D\ar[d]^{\id}\ar[dl]_{\triangle}\ar[dr]^{\triangle}\\
D\otimes D\ar[r]^-{\epsilon\otimes \id} & D & D\otimes D \ar[l]_-{\id\otimes \epsilon}
}\]
The coalgebra $D$ is called \emph{coaugmented} if there is furthermore a coaugmentation $k$-linear map $\eta\colon k \to D$, satisfying the identities
\[
\triangle \eta = \eta \otimes \eta \hspace{1cm} \textup{and} \hspace{1cm} \epsilon \eta = \id. 
\]
\end{defn}

We recall some classical examples of coalgebras over a field which will play an important role in this work. 
\begin{ex}\label{polycoalgex}
Let $D=k[w_1, w_2, \dots]$, where each $w_i$ is in even degree, denote the $k$-coalgebra with vector space basis $\{w_1^{j_1}w_2^{j_2}\dotsm\}_{j_i \geq0}$, and with comultiplication given by
\[
\triangle(w_i^j) = \sum_k  \tbinom{j}{k}\,w_i^k\otimes w_i^{j-k}
\]
on basis elements of the form $w_i^j$.  This defines the comultiplication $\triangle|_i$ on the underlying vector space of $k[w_i]$ for each $i$.  To extend to all basis elements, regard  $w_{i_1}^{j_1}\dotsm w_{i_n}^{j_n}$ as the simple tensor  $w_{i_1}^{j_1}\otimes\dots \otimes  w_{i_n}^{j_n}\in k[w_{i_1}]\otimes\dots \otimes k[w_{i_n}]$.  The comultiplication is defined by the composite
\begin{multline*}
 k[w_{i_1}]\otimes \dots \otimes k[w_{i_n}] \xto{\triangle|_{i_1}\otimes \dots \otimes\triangle|_{i_n}} k[w_{i_1}]\otimes k[w_{i_1}]\otimes \dots \otimes k[w_{i_n}]\otimes k[w_{i_n}]\\
\xto{\ \sigma\ } k[w_{i_1}]\otimes \dots \otimes k[w_{i_n}]\otimes k[w_{i_1}]\otimes \dots \otimes k[w_{i_n}]
\end{multline*}
where $\sigma$ is the evident permutation of the tensor factors.
The counit is given by:
\[
\epsilon(w_1^{j_1}w_2^{j_2}\dotsm) = \begin{cases} 1 &\text{if all } j_i=0\\
0 & \text{if some } j_i>0
\end{cases}
\]
We refer to this as the \emph{polynomial coalgebra.} Some readers will find this example familiar as the underlying coalgebra of the polynomial Hopf algebra; the fact that this coalgebra extends to a Hopf algebra structure determines the comultiplication on all basis elements from its definition on the elements $w_i$. 
\end{ex}

\begin{ex}
Let $D=\Lambda_k(x_1, x_2, \dotsc)$, where the degree of each $x_i$ is odd, denote the $k$-coalgebra with vector space basis $\{x_{i_1}\dotsm x_{i_n}\}_{n\geq 0, i_1<\dots <i_n}$, and with comultiplication on $x_i$ given by
\[
\triangle(x_i) = 1 \otimes x_i + x_i \otimes 1.
\]
The comultiplication can be extended from $\Lambda_k(x_i)$ to all basis elements as above in \cref{polycoalgex}. The counit is given by:
\[
\epsilon(x_{i_1}\dotsm x_{i_n}) = 0, \qquad \epsilon(1) = 1. 
\]
This is the cofree graded cocommutative coaugmented coalgebra on the cogenerators $\{x_1, x_2,\dots\}$  More details about the cofree coalgebra construction may be found in \cite[Chapter 12]{Sweedler}.  We refer to this as the \emph{exterior coalgebra.} Again, this example may be familiar as the underlying coalgebra structure of the exterior Hopf algebra.
\end{ex}

\begin{defn}
A graded $k$-coalgebra $D_*$ is \emph{connected} if $D_*=0$ when $*<0$ and the counit map $\epsilon\colon D_* \to k$ is an isomorphism in degree zero.  
 \end{defn}

\begin{defn}
Let $D$ be a coalgebra over a field $k$.  A \emph{right $D$-comodule} is a $k$-vector space $M$ along with a linear coaction map
\[
\rho_M\colon M \to M \otimes D
\]
such that the following coassociativity and counitality diagrams commute:
\[
\xymatrix{M \ar[r]^{\rho_M} \ar[d]_{\rho_M} & M \otimes D \ar[d]^{\id \otimes \triangle} \\
M \otimes D \ar[r]^-{\rho_M \otimes \id} & M \otimes D \otimes D} \hspace{1cm} 
\xymatrix{M \ar[r]^-{\rho_M} \ar[dr]_{\id} & M\otimes D \ar[d]^{\id\otimes \epsilon} \\ 
& M}
\]
A \emph{left $D$-comodule} is a $k$-vector space $N$ along with a linear coaction map
\[
\rho_N\colon N \to D \otimes N,
\]
such that coassociativity and counitality diagrams analogous to those above commute. 
\end{defn}

\begin{defn}
Let $D$ be a coalgebra over a field $k$. A \emph{$(D, D)$-bicomodule} is a $k$-vector space $M$ that is both a left and right $D$-comodule, such that the following diagram commutes:
\[
\xymatrix{M \ar[r]^-{\rho_M}\ar[d]_{\rho_M'} & M \otimes D \ar[d]^{\rho_M' \otimes \id} \\
D \otimes M \ar[r]^-{\id \otimes \rho_M} & D \otimes M \otimes D}
\]
Here $\rho_M$ denotes the right $D$-coaction map and $\rho_M'$ denotes the left $D$-coaction map. 
\end{defn}

Given a right $D$-comodule $M$ and a left $D$-comodule $N$, one can define their cotensor product $M \Box_D N$ as follows.
\begin{defn}
Let $D$ be a coalgebra over a field $k$. Let $M$ be a right $D$-comodule and $N$ be a left $D$-comodule with coaction maps $\rho_M\colon M \rightarrow M \otimes D$ and $\rho_N\colon N \rightarrow D \otimes N$. The \emph{cotensor product} of $M$ and $N$ over $D$, $M \Box_D N$, is defined as the equalizer in $k$-vector spaces:
\[
M \Box_D N \longrightarrow M\otimes N  \xrightrightarrows[\ \id\otimes \rho_N\ ]{\ \rho_M\otimes \id\ } M \otimes D \otimes N.
\]
\end{defn}

By construction, the cotensor $M \Box_D N$ is naturally a $k$-vector space. If $M$ and $N$ are bicomodules, the resulting cotensor $M \Box_D N$ will again be a bicomodule. Note that this relies on the fact that we are working with coalgebras over a field. For coalgebras over a general commutative ring $R$, the cotensor $M \Box_D N$ may not be a $D$-comodule (cf.\,\cite[11.3]{brzeznski-wisbauer}).  In fact, because we are working over a field, the category  $\CoMod_D$ of bicomodules is an abelian category in which finite products and coproducts are given by direct sum (which agrees with Cartesian product) of $k$-vector spaces \cite[3.26]{brzeznski-wisbauer}. 
The cotensor $\Box_D$ is a monoidal product on this category with unit $D$.

If $B$ and $D$ are cocommutative $k$-coalgebras, a map of cocommutative coalgebras $B\to D$ gives $B$ the structure of a $D$-comodule.  There is an alternate description of $\Box_D$ for such comodules. This is dual to observing that the pushout of commutative algebras produces the tensor product.
\begin{observation}\label{boxprodforcoalg}
Given  cocommutative coalgebras $B_1, B_2,$ and $C$ over a field $k$, along with maps of $k$-coalgebras $f_1\colon B_1 \to C$ and $f_2\colon B_2 \to C$, we can view $B_1$ and $B_2$ as $C$-bicomodules. The right coaction map $\rho_{B_1}$, for instance, is given by
\[
\rho_{B_1}\colon B_1 \xto{\ \triangle\ }  B_1 \otimes B_1 \xto{\ \id \otimes f_1\ }  B_1 \otimes C. 
\]
In this case, the pullback in the category of cocommutative $k$-coalgebras of the diagram  
\[\xymatrix{& B_2\ar[d]^{f_2} \\ B_1\ar[r]^{f_1} & C}
\]
also agrees with the cotensor product $B_1\Box_C B_2$ of $B_1$ and $B_2$ as $C$-bicomodules.  One can readily verify that, since we're working over a field, the cotensor product $B_1\Box_C B_2$ is again a cocommutative coalgebra and satisfies the universal property of the displayed pullback.  
\end{observation}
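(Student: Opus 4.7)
The plan is to verify two claims: (a) the cotensor $B_1 \Box_C B_2$ inherits a natural cocommutative $k$-coalgebra structure, and (b) equipped with this structure it satisfies the universal property of the pullback of $f_1, f_2$ in the category of cocommutative $k$-coalgebras.

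For (a), I would start from the fact that $B_1 \otimes B_2$ is itself a cocommutative $k$-coalgebra via the tensor-product comultiplication $(\id \otimes \tau \otimes \id) \circ (\triangle_{B_1} \otimes \triangle_{B_2})$. The task is to show that this comultiplication restricts to the subspace $B_1 \Box_C B_2 \subseteq B_1 \otimes B_2$, i.e., sends $B_1 \Box_C B_2$ into $(B_1 \Box_C B_2) \otimes (B_1 \Box_C B_2)$. In Sweedler notation, this reduces to verifying both $\Box_C$-equalizer conditions on the outer factors of $\triangle_{B_1 \otimes B_2}(x)$ for $x \in B_1 \Box_C B_2$; these are obtained by applying $\triangle \otimes \id \otimes \triangle$ to the equalizer condition on $x$ and then permuting Sweedler components, with the required permutations being swaps of iterated coproduct factors that hold precisely because $B_1$ and $B_2$ are cocommutative. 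Counitality and cocommutativity of the resulting structure on $B_1 \Box_C B_2$ are then inherited from $B_1 \otimes B_2$.

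For (b), given any cocommutative $k$-coalgebra $A$ with coalgebra maps $g_i \colon A \to B_i$ satisfying $f_1 g_1 = f_2 g_2$, define the candidate map
\[
g \colon A \xrightarrow{\triangle_A} A \otimes A \xrightarrow{g_1 \otimes g_2} B_1 \otimes B_2.
\]
Checking that $g$ factors through the equalizer $B_1 \Box_C B_2$ is a direct Sweedler computation using only coassociativity and the hypothesis $f_1 g_1 = f_2 g_2$; notably, this step does not itself require cocommutativity. Checking that $g$ is a coalgebra map amounts to comparing $(g \otimes g)\triangle_A(a)$ and $\triangle_{B_1 \otimes B_2}(g(a))$ as elements of $(B_1 \otimes B_2)^{\otimes 2}$, and expanding via $\triangle_A^{(3)}(a) = \sum a^{(1)} \otimes a^{(2)} \otimes a^{(3)} \otimes a^{(4)}$ one sees that these four-fold tensors agree precisely after transposing the middle factors $a^{(2)} \leftrightarrow a^{(3)}$, which is exactly cocommutativity of $A$. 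Uniqueness follows from the standard identity $w = (\pi_1 \otimes \pi_2)\triangle_{B_1 \otimes B_2}(w)$ for $w \in B_1 \otimes B_2$, where $\pi_i$ is the counit-induced projection onto $B_i$; this forces any coalgebra map $h$ with $\pi_i h = g_i$ to satisfy $h(a) = (\pi_1 h \otimes \pi_2 h)\triangle_A(a) = g(a)$.

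The main obstacle is the restriction check in part (a): a priori, $\triangle_{B_1 \otimes B_2}$ only lands in $(B_1 \otimes B_2)^{\otimes 2}$, and confirming that it descends to the cotensor subspace is where the cocommutativity hypotheses on $B_1$ and $B_2$ come in essentially. Once this is established, both stated claims follow from routine Sweedler bookkeeping, aided throughout by the fact that we are working over a field, so that all relevant cotensor equalizers are exact.
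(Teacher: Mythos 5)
Your verification is correct and is exactly the standard argument the paper has in mind: the paper states this as an Observation and leaves the check that $B_1\Box_C B_2$ is a cocommutative subcoalgebra of $B_1\otimes B_2$ satisfying the pullback universal property as "readily verified," which is precisely what you supply. You correctly isolate the two places where hypotheses enter — cocommutativity of $B_1,B_2$ (and of the test object $A$) for the restriction of the comultiplication and for $g=(g_1\otimes g_2)\triangle_A$ being a coalgebra map, and the field hypothesis for identifying $(B_1\Box_C B_2)\otimes(B_1\Box_C B_2)$ with the intersection of the two equalizer conditions inside $B_1\otimes B_2\otimes B_1\otimes B_2$.
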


In our work it will be important to consider the exactness of the cotensor product. 
\begin{defn}
Let $D$ be a coalgebra over a field $k$. A right comodule $M$ over a coalgebra $D$ is \emph{coflat} if $M\square_D-$ is exact as a functor from left $D$-comodules to $k$-vector spaces. 
\end{defn}
For a coalgebra $D$ over a field $k$, any direct summand of a cofree $D$-comodule is coflat.  
In fact, in this case coflat $D$-comodules are precisely the $D$-injective comodules \cite[10.12]{brzeznski-wisbauer}, which are precisely the direct summands of cofree comodules \cite{doi}.

Observe that the standard definitions of coalgebraic structures above all involve working over a base ring, which in our case we take to be the field $k$.  As mentioned, the natural structures arising in $\coTHH$ also include new coalgebraic structures defined with respect to a base coalgebra. In particular, for a coalgebra $D$, we need notions of algebras, coalgebras, bialgebras, and Hopf algebras over $D$.  These definitions require working with $D$-bicomodules.  When $D$ is cocommutative, any $D$-comodule naturally has an induced $D$-bicomodule structure, and the examples we consider in the remainder of the paper are of this form.
\begin{defn}\label{def:boxcoalg}
Let $D$ be a cocommutative coalgebra over a field $k$. A \emph{$\Box_D$\=/coalgebra} $E$ is a $D$-bicomodule together with maps
\[ \triangle\colon E\to E\Box_D E \text{\qquad and\qquad} \epsilon\colon E\to D\]
of $D$-bicomodules satisfying appropriate coassociativity and counitality conditions.  Explicitly, for coassociativity, we require that the following diagram commutes.
\[\xymatrix@C=.7pc@R=1.3pc { E \ar[rr]^-\triangle\ar[d]_-\triangle& &E\Box_D E\ar[d]^-{\id\Box \triangle}\\
E\Box_D E\ar[dr]^-{\triangle\Box \id} && E\Box_D(E\Box_D E)\ar[dl]_-{\cong}\\
& (E\Box_D E)\Box_D E 
}
\]
Counitality is the requirement that the following diagram commutes.
\[\xymatrix{ &E\ar[dl]_{\triangle}\ar[dr]^{\triangle} \ar[d]^{\id}\\
E\Box_D E\ar[r]^-{\epsilon\Box \id} & E & E\Box_D E \ar[l]_-{\id\Box \epsilon}
}\]
A $\boxcoalg{D}$ $E$ is called \emph{coaugmented} if there is furthermore a coaugmentation morphism $\eta\colon D \to E$, satisfying the identities
\[
\triangle \eta = \eta \Box \eta \hspace{1cm} \textup{and} \hspace{1cm} \epsilon \eta = \id. 
\]
\end{defn}

\begin{defn}\label{def:boxalg}
Let $D$ be a cocommutative coalgebra over a field $k$.  A \emph{$\Box_D$\=/algebra} $A$ is a $D$-bicomodule together with maps of $D$-bicomodules $\mu\colon A\Box_D A \to A$ and $\eta\colon D\to A$, satisfying the usual associativity and unitality conditions. A $\Box_D$-algebra $A$ is called \emph{augmented} if furthermore there is an augmentation morphism $\epsilon\colon A \to D$ satisfying the identities
\[
 \epsilon\mu = \epsilon \Box \epsilon \text{\qquad and \qquad} \epsilon \eta = \id. 
\]
\end{defn}

\begin{defn}\label{def:boxbialg}
Let $D$ be a cocommutative coalgebra over a field $k$. A \emph{$\Box_D$\=/bialgebra} $H$ is a $\Box_D$-coalgebra that is also equipped with a multiplication $\mu\colon H\Box_D H\to H$ and a unit $\eta\colon D\to H$ that are maps of $D$-bicomodules. These must satisfy the usual associativity and unit conditions and additionally the following diagrams must commute:
\begin{enumerate}
\item Compatibility of comultiplication and multiplication
\[\xymatrix{H\Box_D H \ar[r]^-{\mu}\ar[d]^-{\triangle\Box\triangle}& H \ar[r]^-{\triangle} &H\Box_D H\\
H\Box_DH\Box_DH\Box_D H\ar[rr]^-{\id\Box\tau\Box \id} && H\Box_DH\Box_D H\Box_D H\ar[u]_-{\mu\Box \mu} 
}
\]
where $\tau$ is the twist.  
\item Compatibility of multiplication and counit
\[\xymatrix{H\Box_D H \ar[dr]^-{\epsilon\Box \epsilon}\ar[rr]^-{\mu} &&H\ar[dl]_{\epsilon}\\
& D}
\]
\item Compatibility of comultiplication and unit
\[\xymatrix{ & D\ar[dl]_{\eta\Box \eta} \ar[dr]^{\eta}\\
H\Box_D H && H\ar[ll]^-
{\triangle}
}\]
\item Compatibility of unit and counit
\[\xymatrix @R=.7pc{D\ar[dr]^{\eta} \ar[dd]^-{\id} \\
&H\ar[dl]^{\epsilon}\\
D
}\]
\end{enumerate}
\end{defn}

\begin{defn}\label{defn:boxhopf} Let $D$ be a cocommutative coalgebra over a field $k$.  A \emph{$\Box_D$\=/Hopf algebra} $H$ is a $\boxbialg{D}$ together with a $D$-bicomodule map $\chi\colon H\to H$ called the \emph{antipode} making the following diagram commute:
\[\xymatrix @R=.7pc{& H\Box_D H \ar[rr]^{\chi\Box \id}&& H\Box_D H\ar[ddr]^{\mu}\\
\\
H\ar[uur]^{\triangle}\ar[rr]^{\epsilon}\ar[ddr]_{\triangle} && D\ar[rr]^{\eta}&& H\\
\\
&H\Box_D H\ar[rr]^{\id\Box \chi}&& H\Box_D H\ar[uur]_{\mu}
}
\]
\end{defn}

\begin{rem}\label{remark:Hopfmonoids}
A $\boxhopf{D}$ can equivalently be defined as a Hopf monoid in the category of $D$-bicomodules.  A \emph{Hopf monoid} in a monoidal category is an object that has compatible  monoid and comonoid structures, together with an antipode.  See \cite[\S 1.2.5]{aguiarmahajan} for more details.
\end{rem}

We now define what it means for elements in a $\Box_D$-Hopf algebra to be indecomposable or primitive. 

\begin{defn}\label{def:indecomposable}
Let $A$ be an augmented $\Box_D$-algebra with augmentation $\epsilon\colon A \to D$.  Let $IA=\ker(\epsilon)$ denote the the augmentation ideal of $A$.  We define the \emph{indecomposable elements} of $A$, denoted $QA$, by the exact sequence
\[
IA \Box_D IA \xto{\ \mu\ }  IA \longrightarrow QA \longrightarrow 0.
\]
In other words, an element is indecomposable if it is in the kernel of the augmentation, but not in the image of the product on the augmentation ideal. 
\end{defn}

\begin{defn}\label{def:primitive}
Let $E$ be a coaugmented $\Box_D$-coalgebra, with coaugmentation $\eta\colon D \to E$. Let $JE$ denote the cokernel of $\eta$, and let $IE$ denote the kernel of the counit $\epsilon\colon E \to D$. Let $PE$ be defined by the exact sequence
\[
0 \longrightarrow  PE \longrightarrow JE \xto{\ \triangle\ } JE \Box_D JE.
\] 
An element $d\in IE$ is \emph{primitive} if its image in $JE$ is in $PE$. 
\end{defn}

\begin{lemma}
Let $E$ be a coaugmented $\Box_D$-coalgebra. If $e\in IE$ then 
\[
\triangle(e) = e \Box_D 1 + 1 \Box_D e + {\textstyle\sum_i} e_{(i)}' \Box_D e_{(i)}''
\]
where $\sum_i e_{(i)}' \Box_D e_{(i)}'' \in IE \Box_D IE$. 
\begin{proof}
The coaugmented $\Box_D$-coalgebra $E$ splits as $E \cong D \oplus IE$, and  
\[
E\Box_DE = (D\Box_D D) \oplus (IE \Box_D D) \oplus (D \Box_D IE) \oplus (IE \Box_D IE).
\]
Then the statement holds because by counitality
\[
\id = (\epsilon \Box_D \id)\circ \triangle = (\id \Box_D \epsilon) \circ\triangle.
\qedhere\] 
\end{proof}
\end{lemma}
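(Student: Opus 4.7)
The plan is to exploit the direct sum decomposition of $E$ provided by the coaugmentation, together with the decomposition of $E\Box_D E$ into four summands already indicated in the sketch. Because $\epsilon\eta = \id_D$, the short exact sequence $0 \to IE \to E \to D \to 0$ of $D$-bicomodules splits, giving $E \cong \eta(D) \oplus IE \cong D \oplus IE$ in $\CoMod_D$. Since we work over a field, $\CoMod_D$ is abelian and $\Box_D$ is additive in each variable with unit $D$, so this decomposition passes to
\[
E\Box_D E \cong (D\Box_D D) \oplus (IE\Box_D D) \oplus (D\Box_D IE) \oplus (IE\Box_D IE) \cong D \oplus IE \oplus IE \oplus (IE\Box_D IE).
\]

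For $e \in IE$, I would write $\triangle(e) = (a, b, c, g)$ according to this decomposition, with $a \in D$, $b, c \in IE$, and $g \in IE\Box_D IE$. The counit axiom reads $(\epsilon\Box_D \id)\circ \triangle = \id_E$; on each of the four summands the map $\epsilon\Box_D \id$ acts as the identity when the left factor lies in $\eta(D)\cong D$ (since $\epsilon|_{\eta(D)} = \id$) and as zero when it lies in $IE$. Thus $(\epsilon\Box_D \id)(\triangle(e))$ corresponds to $a + c$ under $D\Box_D E \cong E = D \oplus IE$, and this must equal $e$. Since $e \in IE$, this forces $a = 0$ and $c = e$. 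Applying $\id\Box_D \epsilon$ symmetrically forces $b = e$.

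Finally, under the unit isomorphisms $IE\Box_D D \cong IE \cong D\Box_D IE$, the components $b$ and $c$ are precisely what the lemma denotes by $e \Box_D 1$ and $1 \Box_D e$, while $g$ is the claimed remainder $\sum_i e'_{(i)} \Box_D e''_{(i)} \in IE\Box_D IE$. The only delicate point is verifying that the coaugmentation splitting interacts cleanly with the cotensor product, but this is immediate from the fact that $\CoMod_D$ is abelian and $\Box_D$ distributes over finite direct sums, as established in the discussion preceding the definition of $\boxcoalg{D}$. There is no real obstacle; the argument is essentially a bookkeeping exercise once the four-summand decomposition is in hand.
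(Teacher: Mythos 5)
Your argument is correct and is essentially the paper's own proof, just written out in full: the paper likewise splits $E\cong D\oplus IE$, decomposes $E\Box_D E$ into the same four summands, and invokes counitality to identify the components. The details you supply (the splitting of the bicomodule sequence via $\epsilon\eta=\id$, additivity of $\Box_D$ over a field, and the componentwise evaluation of $\epsilon\Box_D\id$ and $\id\Box_D\epsilon$) are exactly the steps the paper leaves implicit.
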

Note that the natural map $IE \to JE$ is an isomorphism.  It follows that if $e \in IE$ is primitive, then $\triangle(e) = e \Box_D 1 + 1 \Box_D e$.

The following proposition allows us to understand $\Box_C$-primitive elements in a $\Box_C$-coalgebra of the form $C \otimes D$. This will be computationally useful in \cref{sect:computations}. 

\begin{prop}\label{prop:primitive}
Let $C$ and $D$ be cocommutative coaugmented $k$-coalgebras. 
Then $C\otimes D$ is a $\Box_C$-coalgebra, and an element of the form $c\otimes d \in C \otimes D$ is primitive as an element of the $\Box_C$-coalgebra $C \otimes D$ if and only if $d$ is primitive in the $k$-coalgebra $D$. 
\begin{proof}
Recall that for $C$ and $D$ $k$-coalgebras, the tensor product $C \otimes D$ (over $k$) is also a $k$-coalgebra, with comultiplication given as follows. 
\[
C \otimes D \xto{\ \triangle_C \otimes \triangle_D\ } (C \otimes C) \otimes (D \otimes D) \xto{\ \id \otimes \tau \otimes \id\ } (C\otimes D) \otimes (C \otimes D)
\]
where $\tau$ is the twist.
Note that $C \otimes D$ is a left $C$-comodule, with coaction map
\[
\psi\colon C \otimes D \xto{\ \triangle_C \otimes \id\ } C \otimes C \otimes D.\]
Similarly, $C \otimes D$ is a right $C$-comodule with coaction map
\[
\rho\colon C \otimes D \xto{\ \triangle_C \otimes \id\ }  C \otimes C \otimes D \xto{\ \tau'\ }  C \otimes D \otimes C,
\]
where $\tau'$ rotates the first factor to the end. 
The cotensor $(C\otimes D) \Box_C (C\otimes D)$ is defined as the equalizer
\[
(C\otimes D) \Box_C (C\otimes D) \longrightarrow (C\otimes D) \otimes (C\otimes D) \xrightrightarrows[\id \otimes \psi]{\rho \otimes \id}  (C\otimes D) \otimes C \otimes (C\otimes D).
\]
Observe that the comultiplication on $C\otimes D$ as a $k$-coalgebra induces a comultiplication map $C\otimes D \to (C\otimes D) \Box_C (C\otimes D )$ by the universal property of the equalizer. In particular, the coassociativity of the comultiplication on $C$ guarantees that the diagram below commutes: 
\[
\xymatrix{ & C\otimes D \ar[d]^{\triangle} \ar@{-->}[ld]&  \\(C\otimes D) \Box_C (C\otimes D) \ar[r] & (C\otimes D) \otimes (C\otimes D) \ar@<.5ex>[r]^-{\rho \otimes \id}\ar@<-.5ex>[r]_-{\id \otimes \psi} & (C\otimes D) \otimes C \otimes (C\otimes D).
}
\]

By \cref{def:primitive}, to understand the primitive elements of $C\otimes D$ as a $\Box_C$-coalgebra, we need to first consider the cokernel of the map $\eta\colon C \to C\otimes D$ defined by
\[
C\longrightarrow C \otimes k \xto{\ \id \otimes \eta_D\ } C \otimes D,
\]
where $\eta_D$ is the coaugmentation on $D$. The cokernel of $\eta$, which we will denote $J$, is given by $J = C \otimes \coker(\eta_D)$ because we are working over the field $k$. To identify the primitive elements we must calculate the kernel of $\triangle\colon J \to J\Box_C J$. This map factors as the composite
\[
J \xto{\ \triangle\ }   (C \otimes D) \Box_C (C\otimes D) \xto{}  J \Box_C J.\]

We claim that these primitive elements are $C\otimes PD$, where $PD$ denotes the primitive elements of $D$ as a $k$-coalgebra.  To see this, consider the exact sequence defining the primitive elements of $D$ as a $k$-coalgebra:
\[
0 \longrightarrow PD \longrightarrow \coker(\eta_D) \xto{\ \triangle_D\ }  \coker(\eta_D) \otimes \coker(\eta_D). 
\]
Tensoring this with $C$ over the field $k$ we have
\[
0 \longrightarrow C \otimes PD \longrightarrow C \otimes \coker(\eta_D) \xto{\ \id \otimes \triangle_D\ }  C \otimes( \coker(\eta_D) \otimes \coker(\eta_D)). 
\]
Noting that 
\[
C \otimes( \coker(\eta_D) \otimes \coker(\eta_D)) \cong (C \otimes \coker(\eta_D)) \Box_C (C \otimes \coker(\eta_D)),
\]
the conclusion follows. 
\end{proof}
\end{prop}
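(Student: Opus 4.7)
The plan is to first equip $C\otimes D$ with its canonical $\Box_C$-coalgebra structure, then carefully compute the cokernel of the coaugmentation and its image under comultiplication, and finally deduce the primitives by exactness of tensor products over a field.

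For the first step, I would make $C\otimes D$ into a $(C,C)$-bicomodule using only the comultiplication of $C$: the right coaction is $\Delta_C\otimes\id_D$ followed by the twist sending the new $C$-factor to the end, and the left coaction is $\Delta_C\otimes\id_D$ directly. The usual $k$-coalgebra comultiplication
\[
C\otimes D\xto{\Delta_C\otimes\Delta_D} C\otimes C\otimes D\otimes D \xto{\id\otimes\tau\otimes\id}(C\otimes D)\otimes(C\otimes D)
\]
equalizes the two maps into $(C\otimes D)\otimes C\otimes(C\otimes D)$ because $C$ is cocommutative and coassociative, so by the universal property of the equalizer it factors through $(C\otimes D)\Box_C(C\otimes D)$. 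Taking the $\Box_C$-counit to be $\id_C\otimes\epsilon_D\colon C\otimes D\to C$ and the coaugmentation to be $\eta\colon c\mapsto c\otimes 1_D$ gives the required coaugmented $\Box_C$-coalgebra structure.

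For the second step, I would identify $J=\coker(\eta)$ and $J\Box_C J$ explicitly. Since we work over a field, the functor $C\otimes-$ is exact, so $J\cong C\otimes \overline{D}$, where $\overline{D}=\coker(\eta_D)$. The key identification is
\[
(C\otimes \overline{D})\Box_C(C\otimes \overline{D})\;\cong\; C\otimes \overline{D}\otimes \overline{D},
\]
which follows from unwinding the equalizer: the two maps $\rho\otimes\id$ and $\id\otimes\psi$ differ only in how they duplicate the single $C$-factor, so by cocommutativity their equalizer is precisely the locus $C\otimes \overline{D}\otimes\overline{D}$ sitting inside $C\otimes\overline{D}\otimes C\otimes\overline{D}$ via $\Delta_C$ on the $C$-factor. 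Under this identification, the induced comultiplication $J\to J\Box_CJ$ is $\id_C\otimes\overline{\Delta}_D$, where $\overline{\Delta}_D$ is the induced map $\overline{D}\to\overline{D}\otimes\overline{D}$.

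The third step is then a direct application of exactness. Tensoring the defining exact sequence $0\to PD\to\overline{D}\xto{\overline{\Delta}_D}\overline{D}\otimes\overline{D}$ with $C$ over $k$ yields
\[
0\longrightarrow C\otimes PD \longrightarrow C\otimes \overline{D} \xto{\id\otimes \overline{\Delta}_D} C\otimes\overline{D}\otimes\overline{D},
\]
so by \cref{def:primitive} the primitive elements of $C\otimes D$ as a $\Box_C$-coalgebra are exactly $C\otimes PD$. In particular, $c\otimes d$ is primitive if and only if $d$ lies in $PD$.

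The main obstacle I anticipate is the explicit identification of the cotensor product $(C\otimes\overline{D})\Box_C(C\otimes\overline{D})$ with $C\otimes\overline{D}\otimes\overline{D}$; once this is in hand, the coflatness of $C$ over itself and the exactness of $-\otimes_k-$ make the remaining arguments essentially formal.
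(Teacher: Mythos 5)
Your proposal is correct and follows essentially the same route as the paper: both factor the $k$-coalgebra comultiplication through the equalizer defining $(C\otimes D)\Box_C(C\otimes D)$, identify $J\cong C\otimes\coker(\eta_D)$, and conclude by tensoring the exact sequence defining $PD$ with $C$ using the identification $(C\otimes\coker(\eta_D))\Box_C(C\otimes\coker(\eta_D))\cong C\otimes\coker(\eta_D)\otimes\coker(\eta_D)$. Your explicit observation that the induced comultiplication on $J$ becomes $\id_C\otimes\overline{\Delta}_D$ under this identification is a helpful detail the paper leaves implicit, but it is not a different argument.
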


\section{coHochschild homology}\label{sec:coHH}

In this section, we recall the definition of coHochschild homology, and prove that under coflatness conditions, the coHochschild homology of $D$ has the structure of a $\Box_D$-bialgebra.

\begin{defn}[{Doi \cite{doi}}]\label{def:coHH1} Let $D$ be a cocommutative coalgebra over a field $k$.  Then the \emph{coHochschild homology} $\coHH(D)$ of $D$ is the homology of the cochain complex $C^*(D)$ defined by
\[ C^n(D)=D^{\otimes n+1}\]
with differential $d$ given by 
\begin{multline*}
d
(d_0\otimes\dots\otimes d_n)= \sum_{i=0}^n (-1)^i d_0\otimes \dots \otimes \triangle(d_i)\otimes \dots \otimes d_n  \\+(-1)^{n+1} \tau_{1,n+1}\big(\triangle(d_0)\otimes d_1\otimes \dots \otimes d_n\big)
\end{multline*}
where $\tau_{1,n+1}$ is the twist map that moves the first tensor factor in $D^{\otimes n+2}$ to the last spot. 

This definition can be extended to define coHochschild homology for graded coalgebras and differential graded coalgebras, as in \cite{hps}. 
\end{defn}

The cochain complex in \cref{def:coHH1} is the (unnormalized) cochain complex of a cosimplicial $k$-module $\coHH^\bullet(D)$ defined as follows. 
\begin{defn}\label{defn:coHH}
Let $D$ be a cocommutative coalgebra over a field $k$. The cosimplicial $k$-module $\coHH^{\bullet}(D)$ is given by
\[
\coHH^{n}(D) = D^{\otimes n+1}
\]
with cofaces
\[\delta_i \colon D^{\otimes n+1} \to D^{\otimes n+2}, \quad \delta_i=\begin{cases}
D^{\otimes i} \otimes \comult \otimes D^{n-i}, & 0\leq i \leq n,\\
\tau(\comult \otimes  D^{\otimes n}), & i=n+1,
\end{cases}\]
where $\tau$ twists the first factor to the last, and codegeneracies
\[\sigma_i \colon D^{\otimes n+2} \to D^{\otimes n+1}, \quad
\sigma_i = D^{\otimes i+1} \otimes \epsilon \otimes D^{\otimes n-i} \quad \text{for } 0 \leq i \leq n.\]
\end{defn}

 Under the generalized Dold--Kan correspondence in the cosimplicial setting, the cosimplicial $k$-module $\coHH^\bullet(D)$ is sent to the normalization of the cochain complex in \cref{def:coHH1}; see \cite[{Corollary/Definition 8.4.3}]{weibel}.

In order to prove that the coHochschild homology of $D$ has the structure of a $\Box_D$-bialgebra, we first verify that some standard results from homological algebra carry over to the setting of $\Box_D$-products. In particular, we will use  versions of the Eilenberg--Zilber Theorem and the K\"unneth Theorem for $\Box_D$-products. 

\begin{prop}\label{prop:EZ} Let $D$ be a coalgebra over a field $k$, and let $A^\bullet$ and $B^\bullet$ be cosimplicial $D$-bicomodules.  Then the Eilenberg--Zilber (shuffle) map induces a quasi-isomorphsim  of cochain complexes
\[\sh \colon C^*(A^\bullet \Box_D B^\bullet) \to C^*(A^\bullet)\Box_D C^*(B^\bullet).\]
The Alexander--Whitney map induces a quasi-inverse map
\[AW\colon N^*(A^\bullet)\Box_D N^*(B^\bullet)\to N^*(A^\bullet \Box_D B^\bullet).\]
\end{prop}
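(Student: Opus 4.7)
My plan is to deduce the $\Box_D$-version from the classical cosimplicial Eilenberg--Zilber theorem, using the defining equalizer structure on $\Box_D$ together with strict naturality of the shuffle, Alexander--Whitney, and associated homotopies.

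First, I would invoke the classical cosimplicial Eilenberg--Zilber theorem: for any cosimplicial $k$-modules $X^\bullet$ and $Y^\bullet$, the shuffle $\sh\colon C^*(X^\bullet \otimes Y^\bullet) \to C^*(X^\bullet) \otimes C^*(Y^\bullet)$ and the Alexander--Whitney map $\AW\colon N^*(X^\bullet) \otimes N^*(Y^\bullet) \to N^*(X^\bullet \otimes Y^\bullet)$ are natural chain maps, quasi-inverse via natural chain homotopies. This is the cosimplicial dual of the simplicial statement in \cite{weibel} and admits the same acyclic-models proof, which crucially yields naturality both for the chain maps and for the homotopies.

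Second, I would realize the cotensor as a cosimplicial equalizer,
\[A^\bullet \Box_D B^\bullet = \mathrm{eq}\bigl(A^\bullet \otimes B^\bullet \rightrightarrows A^\bullet \otimes D \otimes B^\bullet\bigr),\]
where the two parallel arrows are the coaction maps $\rho_A \otimes \id$ and $\id \otimes \rho_B$ and $D$ is regarded as a constant cosimplicial $k$-module. Since $C^*$ is degreewise the identity on underlying $k$-modules (only the alternating-sum differential is added), it preserves this levelwise equalizer, and likewise $C^*(A^\bullet) \Box_D C^*(B^\bullet)$ is the analogous equalizer of cochain complexes. Because $D$ is constant, the relevant identifications involving $D$, such as $C^*(- \otimes D) = C^*(-) \otimes D$, are strict equalities of cochain complexes.

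Third, I would appeal to naturality: naturality of $\sh$ in each variable, applied to the cosimplicial maps $\rho_A$ and $\rho_B$, produces strictly commuting squares showing that $\sh$ intertwines the two parallel coaction maps. Hence $\sh$ descends to a chain map $\sh\colon C^*(A^\bullet \Box_D B^\bullet) \to C^*(A^\bullet) \Box_D C^*(B^\bullet)$ on equalizers, and the same argument produces $\AW$ on the normalized complexes. The natural homotopies witnessing that $\sh$ and $\AW$ are quasi-inverse likewise carry equalizer elements to equalizer elements (naturality again), so they restrict to the $\Box_D$-cotensors and continue to exhibit quasi-inverse equivalences there.

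The main obstacle I anticipate is the bookkeeping required to handle the three-fold tensor $A^\bullet \otimes D \otimes B^\bullet$ and its cochain complex in a manner that makes the naturality squares commute on the nose rather than merely up to homotopy. Because $D$ is constant cosimplicially (all its codegeneracies and cofaces are identities), these identifications are direct and should not introduce any genuine difficulty beyond careful bookkeeping.
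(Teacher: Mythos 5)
Your proposal is correct, but it follows a genuinely different route from the paper's proof, which is essentially a one-line reduction: because $k$ is a field, the category $\CoMod_D$ of $D$-bicomodules is abelian, and the generalized Eilenberg--Zilber theorem for bisimplicial objects in an arbitrary abelian category \cite[Section 8.5]{weibel}, applied to $\CoMod_D^{\op}$ and then dualized, directly supplies natural shuffle and Alexander--Whitney quasi-isomorphisms between the cochain complex of the diagonal of the bicosimplicial bicomodule $(p,q)\mapsto A^p\Box_D B^q$ and its total complex --- and these are exactly $C^*(A^\bullet\Box_D B^\bullet)$ and $C^*(A^\bullet)\Box_D C^*(B^\bullet)$. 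That argument never opens up the equalizer presentation of $\Box_D$ and therefore requires no compatibility checks. Your argument instead descends the classical $k$-linear shuffle, Alexander--Whitney, and homotopy data to the equalizers by naturality. This works, and it has the advantage of exhibiting the $\Box_D$-level maps as literal restrictions of the $k$-linear ones, a perspective the paper exploits later when comparing the $\Box_{H_*(C;k)}$-coalgebra structure on the coB\"okstedt spectral sequence with its $k$-coalgebra structure. The cost is exactly the bookkeeping you flag: you must verify that $\sh_{A\otimes D,B}$ and $\sh_{A,D\otimes B}$ (and likewise the two Alexander--Whitney maps and the two homotopies) agree as maps into $C^*(A)\otimes D\otimes C^*(B)$ after identifying both targets; this does hold because every coface and codegeneracy of the constant cosimplicial object $D$ is the identity, so the explicit Eilenberg--MacLane formulas visibly coincide, but it is a verification the abelian-category argument sidesteps entirely. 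Both proofs invoke the field hypothesis at the same point: you need it so that the cotensor products involved are again bicomodules and the degreewise equalizers assemble as claimed, while the paper needs it so that $\CoMod_D$ is abelian.
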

Here the monoidal product $\Box_D$ of $D$-bicomodules extends to a monoidal product on cochain complexes in the usual way: for cochain complexes $A^*$ and $B^*$, the box product $A^*\Box_D B^*$ is defined at level $n$ by
\[(A^*\Box_D B^*)^n=\bigoplus_{p+q=n} A^p\Box_D B^q.\]
In contrast, the monoidal product of cosimplicial $D$-bicomodules is defined levelwise.

\begin{proof}  After applying simplicial/cosimplicial duality, this result follows from the generalized Eilenberg--Zilber theorem for an arbitrary abelian category, which is proved in \cite[Section 8.5]{weibel}.  In more detail, the category $\CoMod_D^\Delta$ of cosimplicial $D$-bicomodules is equal to the opposite category of \emph{simplicial} objects in $\CoMod_D^\op$, and the category of nonnegatively graded cochain complexes in $\CoMod_D$ is equal to the opposite category of nonnegatively graded \emph{chain} complexes in $\CoMod_D^\op$. The generalized Eilenberg--Zilber theorem applies to the abelian category $\CoMod_D^\op$ to produce natural shuffle and Alexander--Whitney maps in chain complexes in $\CoMod_D^\op$, which then dualize to the maps above. 
\end{proof}

\begin{prop}[{K\"unneth Theorem over $\Box_D$}]\label{thrm:boxkunneth}
 Let $D$ be a coalgebra over a field $k$ and let $A^*$ and $B^*$ be bounded below cochain complexes of $D$-bicomodules.  Then there is a K\"unneth map 
\[ H(A^*\Box_D B^*) \to H(A^*)\Box_D H(B^*).\]
If $A^*$ is a complex of coflat $D$-comodules and all $H_i(A^*)$ (or $H_i(B^*)$) are coflat $D$-comodules, then the K\"unneth map is an isomorphism. 
\end{prop}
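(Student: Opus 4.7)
The strategy is to adapt the classical proof of the Künneth theorem to the abelian category of $D$-bicomodules, with the cotensor product $\Box_D$ replacing the tensor product. I focus on the case where $H^i(A^*)$ is coflat; the alternative hypothesis on $H^i(B^*)$ is handled by a symmetric argument with the roles of $A^*$ and $B^*$ swapped. The Künneth map itself is obtained from the shuffle map of \cref{prop:EZ} applied to the cosimplicial bicomodules corresponding to $A^*$ and $B^*$ under the generalized Dold--Kan correspondence.

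The first substantive step is to upgrade the coflatness hypotheses to cover the cycles $Z^n(A^*)$ and boundaries $B^n(A^*)$. This is done by induction on $n$, starting from the vanishing in low degree (using that $A^*$ is bounded below) and combining the two short exact sequences
\[ 0 \to B^n(A^*) \to Z^n(A^*) \to H^n(A^*) \to 0 \qquad \text{and} \qquad 0 \to Z^n(A^*) \to A^n \to B^{n+1}(A^*) \to 0, \]
with the standard fact --- a consequence of the long exact sequence for $\cotor_D$ --- that in a short exact sequence of $D$-bicomodules, coflatness of any two of the three terms implies coflatness of the third.

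The isomorphism now follows from the short exact sequence of complexes $0 \to Z^*(A^*) \to A^* \to B^{*+1}(A^*) \to 0$, in which $Z^*$ and $B^{*+1}$ carry the zero differential. Applying $-\Box_D B^*$ preserves exactness thanks to the coflatness just established, producing a long exact sequence in cohomology. Using again the coflatness of cycles and boundaries, the neighboring terms are identified as $\bigoplus_{p+q=n} Z^p(A^*) \Box_D H^q(B^*)$ and $\bigoplus_{p+q=n} B^{p+1}(A^*) \Box_D H^q(B^*)$, with connecting homomorphism equal to the inclusion $B^{p+1}(A^*) \hookrightarrow Z^{p+1}(A^*)$ cotensored with $H^q(B^*)$. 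This map is injective since $H^{p+1}(A^*)$ is coflat, so the short exact sequence $0 \to B^{p+1}(A^*) \to Z^{p+1}(A^*) \to H^{p+1}(A^*) \to 0$ remains short exact after cotensoring with $H^q(B^*)$. The long exact sequence therefore decomposes into short exact sequences whose cokernels are $H^p(A^*) \Box_D H^q(B^*)$, yielding the desired isomorphism. A diagram chase identifies it with the Künneth map defined above.

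The principal technical subtlety is the coflatness-propagation induction, which depends on the long exact sequence for $\cotor_D$ and the fact that $\CoMod_D$ is abelian. Once the cycles and boundaries are known to be coflat, the remainder is standard long-exact-sequence bookkeeping, so the hard part lies entirely in the initial reduction.
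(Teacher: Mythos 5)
Your cycles-and-boundaries argument for the isomorphism is a genuinely different route from the paper's, which simply invokes the dual K\"unneth (Eilenberg--Moore) spectral sequence $E_2^{p,q}=\bigoplus_{s+t=q}\cotor_D^p(H_s(A^*),H_t(B^*))\Rightarrow H_{p+q}(A^*\Box_D B^*)$ and notes that coflatness of the homology kills the higher $\cotor$ terms; your version is more elementary and self-contained, and the core of it is sound: the induction only ever uses the two valid configurations of the ``two out of three'' principle (sub and quotient coflat imply middle coflat; sub and middle coflat imply quotient coflat), the cotensored short exact sequence of complexes stays exact because $Z^p(A^*)$ is coflat, and the connecting map analysis is correct. (Be aware, though, that the blanket ``any two of three'' claim is not a formal consequence of the long exact sequence: if $M$ and $M''$ are coflat in $0\to M'\to M\to M''\to 0$, one only gets $\cotor_D^1(M',N)\cong\coker(M\Box_D N\to M''\Box_D N)$, which need not vanish. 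You don't use that case, so your induction survives, but don't assert the general statement.)

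There are, however, two genuine gaps. First, your construction of the K\"unneth map does not work: the shuffle map of \cref{prop:EZ} has source $C^*(A^\bullet\Box_D B^\bullet)$, the cochain complex of the \emph{levelwise} cotensor of cosimplicial objects, not the cochain-level product $A^*\Box_D B^*$; so on homology it does not produce a map out of $H(A^*\Box_D B^*)$. The paper instead observes that $A^*\Box_D B^*$ is the equalizer of the two coaction maps $A^*\otimes_k B^*\rightrightarrows A^*\otimes_k D\otimes_k B^*$ in cochain complexes of $k$-modules, applies the classical K\"unneth isomorphism over the field $k$, and uses naturality to see that the composite $H(A^*\Box_D B^*)\to H(A^*\otimes_k B^*)\cong H(A^*)\otimes_k H(B^*)$ lands in the equalizer $H(A^*)\Box_D H(B^*)$. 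Second, the case where only the $H_i(B^*)$ are assumed coflat is \emph{not} symmetric: the hypothesis that the complex is levelwise coflat is placed on $A^*$ alone, so swapping the roles of $A^*$ and $B^*$ would require $B^*$ to be levelwise coflat, which is not given. Your induction establishing coflatness of $Z^n(A^*)$ and $B^n(A^*)$ genuinely needs the $H^n(A^*)$ to be coflat, so the parenthetical half of the statement is left unproved by your argument; the paper's spectral-sequence proof covers both cases at once, since $\cotor_D^p(M,N)$ vanishes for $p>0$ when either variable is coflat.
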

\begin{proof}
 The naturality of the  K\"unneth theorem for cochain complexes of  $k$-modules implies that we get commutative squares choosing either the left or right vertical arrows in the lower square below:  
\[
\xymatrix{
H(A^*\Box_D B^*)\ar[d]\ar@{-->}[r] & H(A^*)\Box_D H(B^*)\ar[d]\\
H(A^*\otimes_k B^*)\ar@<-1ex>[d]\ar@<1ex>[d] \ar[r]^-\cong& H(A^*)\otimes_k H(B^*)\ar@<-1ex>[d]\ar@<1ex>[d]\\
H(A^*\otimes_k D\otimes_k B^*) \ar[r]^-\cong & H(A^*)\otimes_k D\otimes_k H(B^*)
}
\]
Here, to obtain the parallel maps in the left column, we view $D$ as a cochain complex concentrated in degree $0$ and note that the right coaction maps $A^n\to A^n\otimes D$ assemble into a map of $k$-cochain complexes $A^*\to A^*\otimes D$; similarly for the left coaction maps on $B^*$. The $\Box_D$ product $A^*\Box_D B^*$ is the equalizer in cochain complexes of these coaction maps. 

Since the the top left arrow equalizes the lower left parallel arrows in the category of cochain complexes and since homology is functorial, the top left arrow equalizes the right parallel arrows after passing through the horizontal isomorphisms.  Hence there is an induced map to the equalizer of the right lower parallel arrows, which is $H(A^*)\Box_D H(B^*)$ as shown.  
This is the desired map
\[H(A^*\Box_D B^*)\to H(A^*)\Box_D H(B^*).\]
If $A^*$ is a complex of coflat $D$-comodules, the dual K\"unneth spectral sequence \cite{EilenbergMoore66} has the form
\[
E_2^{p,q} = \bigoplus_{s+t=q} \cotor_{D}^p(H_s(A^*), H_t(B^*)) \Rightarrow H_{p+q}(A^*\Box_D B^*).
\]
If all $H_i(A^*)$ (or all $H_i(B^*)$) are coflat $D$-comodules, $ \cotor_{D}^p(H_s(A^*), H_t(B^*))$ is trivial for $p>0$, and this spectral sequence collapses, yielding the desired isomorphism. 
 \end{proof}

Having established these homological algebra results, we now verify that for a cocommutative coalgebra $D$ over a field, its coHochschild homology, $\coHH(D)$, is a $\Box_D$-bialgebra. Note that the cosimplicial $k$-module $\coHH^\bullet(D)$ is a cosimplicial $D$-bicomodule.  At level $n$, the left coaction of $D$ is given by the comultiplication on the first copy of $D$:
\[ D\otimes (D^{\otimes n})\xto{\triangle\otimes \id} D\otimes D\otimes (D^{\otimes n}).\]
The right coaction is given by comultiplication followed by a twist:
\[ D\otimes (D^{\otimes n})\xto {\triangle \otimes \id} D\otimes D \otimes (D^{\otimes n})\xto{\tau_{1,n+1}} D\otimes (D^{\otimes n})\otimes D.\]

\begin{prop}\label{prop:coHHcoalg}
Let $D$ be a cocommutative coalgebra over a field $k$. Then the coHochschild homology, $\coHH_*(D)$, is a $\Box_D$-coalgebra. 
\end{prop}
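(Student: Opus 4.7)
The plan is to lift a $\Box_D$-coalgebra structure from the cosimplicial level, where it arises naturally from cocommutativity, to homology via the shuffle map (\cref{prop:EZ}) and the K\"unneth map (\cref{thrm:boxkunneth}). First I would observe that, since $D$ is cocommutative, each level $\coHH^n(D)=D^{\otimes n+1}$ is a cocommutative coalgebra (over $k$) with the tensor-product comultiplication, and it becomes a cocommutative coalgebra over $D$ via the structure map $f_n=\id\otimes\epsilon^{\otimes n}\colon D^{\otimes n+1}\to D$. A short calculation in Sweedler notation, using counitality, shows that the left and right $D$-coactions on $D^{\otimes n+1}$ induced by $f_n$ agree with those described immediately before the Proposition. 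By \cref{boxprodforcoalg}, each $\coHH^n(D)$ is therefore a $\Box_D$-coalgebra whose comultiplication $\triangle_n\colon \coHH^n(D)\to \coHH^n(D)\Box_D \coHH^n(D)$ is the diagonal in cocommutative coalgebras over $D$ and whose counit is $f_n$.

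Next I would check that the cofaces $\delta_i$ and codegeneracies $\sigma_i$ are morphisms in the category of cocommutative coalgebras over $D$. Each $\delta_i$ is built from identities together with a single application of $\triangle$ (and, for $\delta_{n+1}$, a twist), while each $\sigma_i$ is built from identities and $\epsilon$; these are all coalgebra maps over $k$ thanks to cocommutativity of $D$. Compatibility with the structure maps $f_n$ reduces to iterated use of the counit identities $(\epsilon\otimes\id)\triangle=\id=(\id\otimes\epsilon)\triangle$. Consequently, $\coHH^\bullet(D)$ is a cosimplicial $\Box_D$-coalgebra: the maps $\triangle_n$ and $f_n$ assemble into morphisms of cosimplicial $D$-bicomodules $\coHH^\bullet(D)\to \coHH^\bullet(D)\Box_D \coHH^\bullet(D)$ and $\coHH^\bullet(D)\to D$, where $D$ is regarded as a constant cosimplicial bicomodule.

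Finally, applying the cochain complex functor and postcomposing with the shuffle map of \cref{prop:EZ} yields a chain-level comultiplication
\[C^*(\coHH^\bullet(D))\to C^*(\coHH^\bullet(D)\Box_D \coHH^\bullet(D))\xto{\sh} C^*(\coHH^\bullet(D))\Box_D C^*(\coHH^\bullet(D)),\]
and an analogous counit to $D$. Passing to homology and composing with the K\"unneth map of \cref{thrm:boxkunneth} (which exists without any coflatness hypothesis) produces the desired $D$-comodule maps
\[\coHH_*(D)\to \coHH_*(D)\Box_D \coHH_*(D)\qquad\text{and}\qquad \coHH_*(D)\to D.\]
Coassociativity and counitality on $\coHH_*(D)$ then follow from the levelwise versions together with the (co)associativity of the shuffle map and naturality of the K\"unneth map. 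I expect the main subtlety to be verifying that the levelwise diagonal $D^{\otimes n+1}\to D^{\otimes n+1}\otimes D^{\otimes n+1}$ genuinely factors through the equalizer $D^{\otimes n+1}\Box_D D^{\otimes n+1}$; this follows abstractly from \cref{boxprodforcoalg}, but requires unpacking the cotensor equalizer in terms of Sweedler coproducts and carefully tracking which factors the left and right $D$-coactions act on.
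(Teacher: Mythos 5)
Your proposal is correct and follows essentially the same route as the paper's proof: define the comultiplication levelwise via the interleaved diagonal (which lands in the cotensor $\coHH^n(D)\Box_D\coHH^n(D)$ by cocommutativity and coassociativity of $D$), pass to cochains, apply the shuffle map of \cref{prop:EZ}, and then the K\"unneth map of \cref{thrm:boxkunneth}, with coassociativity and counitality inherited from the levelwise structure. Your packaging of the levelwise structure via \cref{boxprodforcoalg} and the structure map $f_n=\id\otimes\epsilon^{\otimes n}$ is a harmless reformulation of the coactions the paper writes down directly before the proposition.
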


\begin{proof}
We must define a map of $D$-comodules $\coHH_*(D)\to \coHH_*(D)\Box_D \coHH_*(D)$. We use the generalized Eilenberg--Zilber theorem to first build this map at the cosimplicial level.

  At cosimplicial level $n$, the cocommutative comultiplication $\triangle\colon D\to D\otimes D$ induces a map
\[\triangle \colon \coHH^n(D)\to \coHH^n(D)\otimes \coHH^n(D)\] 
by applying $\triangle$ to each tensor factor and then reordering to interleave the copies of $D$. Because $D$ is coassociative and cocommutative, this map equalizes the left and right $D$-comodule action at each level and thus produces a map of $D$-comodules
\[\coHH^n(D) \to \coHH^n(D)\Box_D\coHH^n(D).\]
Hence the map of cosimplicial $k$-modules
\[ \triangle^\bullet\colon\coHH^\bullet(D)\to \coHH^\bullet(D)\otimes \coHH^\bullet(D)\]
in fact induces a map to the equalizer
\[\triangle^\bullet\colon\coHH^\bullet(D)\to \coHH^\bullet(D)\Box_D\coHH^\bullet(D),\]
recalling that limits (and colimits) in cosimplicial objects are computed levelwise.

Applying the Dold--Kan equivalence, we obtain a map of cochain complexes
\[ N^*(\coHH^\bullet(D))\to N^*(\coHH^\bullet(D)\Box_D\coHH^\bullet).\]
The generalized cosimplicial Eilenberg--Zilber theorem of \cref{prop:EZ} implies that the shuffle map induces a quasi-isomorphism of cochain complexes of $D$-bicomodules
\[N^*(\coHH^\bullet(D)\Box_D\coHH^\bullet(D))\to N^*(\coHH^\bullet(D))\Box_DN^*(\coHH^\bullet(D)).\]
Composing with the map induced from $\triangle^{\bullet}$ above induces a map on homology
\[ \coHH_*(D)\to H_*(N^*(\coHH^\bullet(D))\Box_D N^*(\coHH^\bullet(D))).\] 
By \cref{thrm:boxkunneth} above, we then have a map
\[
\coHH_*(D)\to \coHH_*(D)\Box_D \coHH_*(D).
\]
The counit $\epsilon\colon \coHH_*(D) \to D$ is the map that sends $\coHH_*(D)$ to the $0$-cochains. One can check coassociativity and counitality by hand, or we will see later in \cref{coHHbialg2} that these conditions follow from an alternate simplicial description of this structure.
\end{proof}

\begin{prop}\label{coHHbialg} If $\coHH_*(D)$ is coflat over $D$, then $\coHH_*(D)$ is additionally a $\Box_D$-algebra.
\end{prop}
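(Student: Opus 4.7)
The strategy parallels the construction of the comultiplication in \cref{prop:coHHcoalg}, with the roles of the shuffle and Alexander--Whitney maps effectively swapped and with the K\"unneth map inverted---which is precisely where the coflatness hypothesis enters. I would build the multiplication first at the cochain level via an external pairing, then pass to homology using the K\"unneth isomorphism.

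Since the $D$-bicomodule structure on $\coHH^n(D)=D^{\otimes n+1}$ is concentrated on the first tensor factor, there is a natural identification
\[ \coHH^p(D) \Box_D \coHH^q(D) \cong D^{\otimes p+q+1} = \coHH^{p+q}(D) \]
obtained by identifying the two ``basepoint'' factors via the $\Box_D$ equalizer condition, or equivalently by applying the counit to one of the two redundant $D$-factors in $\coHH^p(D) \otimes \coHH^q(D)$. Geometrically this corresponds to the concatenation of two cyclic tensor configurations at a common vertex, mirroring loop concatenation in $\freeloops X$. The resulting pairings $\mu^{p,q}\colon \coHH^p(D) \Box_D \coHH^q(D) \to \coHH^{p+q}(D)$ assemble into a cochain-level map
\[ \mu^*\colon N^*(\coHH^\bullet(D)) \Box_D N^*(\coHH^\bullet(D)) \to N^*(\coHH^\bullet(D)), \]
since the cochain box product at level $n$ is $\bigoplus_{p+q=n} N^p \Box_D N^q$.

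Each $\coHH^n(D) = D \otimes D^{\otimes n}$ is coflat as a $D$-comodule, because the coaction is concentrated on a single free factor and so $(D \otimes M) \Box_D (-)$ is naturally isomorphic to $M \otimes (-)$, which is exact over $k$. Combined with the coflatness hypothesis on $\coHH_*(D)$, \cref{thrm:boxkunneth} shows the K\"unneth map is an isomorphism
\[ H^*\bigl(N^*(\coHH^\bullet(D)) \Box_D N^*(\coHH^\bullet(D))\bigr) \cong \coHH_*(D) \Box_D \coHH_*(D). \]
Inverting this isomorphism and composing with the homology of $\mu^*$ yields the multiplication $\mu\colon \coHH_*(D) \Box_D \coHH_*(D) \to \coHH_*(D)$, with unit $\eta\colon D = \coHH^0(D) \hookrightarrow \coHH_*(D)$. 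Associativity and unitality should follow from the corresponding properties of the external pairing, reflecting the associativity of loop concatenation; alternatively, they can be extracted from the alternate simplicial description of $\coHH^\bullet(D)$ flagged in the proof of \cref{prop:coHHcoalg}.

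The main technical obstacle is verifying that $\mu^*$ is genuinely a chain map, i.e., that
\[ d\circ \mu^{p,q} = \mu^{p+1,q}\circ(d\otimes \id) + (-1)^p \mu^{p,q+1}\circ (\id\otimes d). \]
The coHochschild differential is an alternating sum of cofaces that includes both internal comultiplications of individual tensor factors and the ``wraparound'' coface $\delta_{n+1}$ carrying the twist $\tau_{1,n+1}$ from \cref{defn:coHH}. Matching these contributions on the two sides of the concatenation relies crucially on the cocommutativity of $D$, analogous to the role of commutativity of $A$ in the construction of the multiplication on $\HH_*(A)$.
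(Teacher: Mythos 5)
Your proposal is correct and takes essentially the same route as the paper: the paper defines the identical cochain-level pairing (apply the counit to the redundant first factor of the second tensor string, $\id^{\otimes i+1}\otimes\epsilon\otimes\id^{\otimes j}$, then concatenate), takes the unit to be the inclusion of the $0$-cochains, and uses the coflatness hypothesis to invert the K\"unneth map of \cref{thrm:boxkunneth} exactly as you describe. The only differences are cosmetic---the paper works with the unnormalized complex and leaves implicit both the coflatness of each $D^{\otimes n+1}$ and the chain-map verification that you flag (the latter is effectively handled later by dualizing Angeltveit--Rognes in \cref{prop:simplicialbialg}).
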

\begin{proof}
There is a cochain level product
\[
\mu\colon C^*(D) \Box_D C^*(D) \to C^*(D),
\]
where $C^*(D)$ denotes the cochain complex computing $\coHH_*(D)$. Let 
\[
{\textstyle\sum}(d_0 \otimes d_1 \otimes \ldots d_i) \otimes(d'_0 \otimes d'_1 \otimes \ldots d'_j)  \in C^*(D) \Box_D C^*(D)
\]
The product $\mu$ is given on each summand 
\[
(d_0 \otimes d_1 \otimes \ldots d_i) \otimes (d'_0 \otimes d'_1 \otimes \ldots d'_j)
\]
by the composite
\[
D^{\otimes (i+1)} \otimes D^{\otimes (j+1)} \xto{\ \id^{i+1} \otimes \epsilon \otimes \id^{j}\ }  D^{\otimes (i+1)} \otimes k \otimes D^{\otimes (j)} \xto{\ \cong\ }  D^{\otimes (i+j+1)},
\]
where $\epsilon$ is the counit in $D$. When $\coHH_*(D)$ is coflat as a $D$-comodule, by \cref{thrm:boxkunneth} this induces a product on coHochschild homology: 
\[
\mu\colon \coHH_*(D) \Box_D \coHH_*(D) \to \coHH_*(D). 
\]
There is also a unit map $\eta\colon D \to \coHH_*(D)$ given by the inclusion of the $0$-cochains.
It is clear from the definition on cochains that the product $\mu$ is associative and unital.  
\end{proof}

We show below that the $\Box_D$-coalgebra structure of \cref{prop:coHHcoalg} and the $\Box_D$-algebra structure of \cref{coHHbialg} will make $\coHH_*(D)$ a $\Box_D$-bialgebra when $\coHH_*(D)$ is coflat over $D$. Before proving this, we consider an alternate description of the $\Box_D$-algebra and coalgebra structures from a cosimplicial perspective.

The identification of the coHochschild cochain complex as arising from the  cosimplicial coalgebra of \cref{defn:coHH} allows us to identify much of the structure on coHochschild homology as arising at this cosimplicial level.  In particular, $\coHH^\bullet(D)$ can be viewed as a cosimplicial cotensor of $S^1_\bullet$ with the coalgebra $D$.   Similar structure exists for Cartesian monoidal $\infty$-categories, as we discuss in the next section.  More precisely, for a cocommutative graded $k$-coalgebra over a field $k$, the cosimplicial $k$-module of \cref{defn:coHH} has the structure of a cosimplicial $k$-coalgebra.  By inspection, this structure is in fact given by viewing $\coTHH_\bullet(D)$ as the cosimplicial cotensor object $D^{S^1_\bullet}$.

With this identification, the $\Box_D$-coalgebra structure on $\coHH_*(D)$, as defined in \cref{prop:coHHcoalg}, is induced by the simplicial fold map $\nabla\colon S^1_{\bullet} \vee S^1_{\bullet} \to S^1_{\bullet}$. The $\Box_D$-product structure on $\coHH(D)$, as in \cref{coHHbialg}, is induced by a simplicial pinch map. In order to arrive at a pinch map that is indeed simplicial, however, one must use a different simplicial model of the circle. In this work we will use two different ``double circle'' models, denoted $dS^1_{\bullet}$ and $d'S^1_\bullet$ respectively (following \cite{AR}). The double circle $dS^1_\bullet$ is given by 
 \[ dS^1_\bullet =(\Delta^1\amalg \Delta^1)\amalg_{(\partial\Delta^1\amalg\partial\Delta^1)}\partial\Delta^1.\]   
  The double circle $d'S^1_{\bullet}$ is given by the quotient of the double 1-simplex $d\Delta^1 = \Delta^1 \amalg_{\Delta^0}\Delta^1$ by its two end-points, $\partial d\Delta^1$. It is this double circle model $d'S^1_{\bullet}$ that is relevant in the current section. We revisit the model $dS^1_{\bullet}$ in \cref{sec:freeloop,sect:thespectralsequence}.

There is a simplicial pinch map
 \[
 \psi\colon d'S^1_{\bullet} \to S^1_{\bullet} \vee S^1_{\bullet}.
 \]
 Let $d'\coHH^\bullet(D)$ denote the cosimplicial $k$-coalgebra $D^{d'S^1_\bullet}$.  To parallel \cref{def:coHH1}, we let $d'\coHH_*(D)$ denote the homology of the cochain complex of $k$-modules $C^*(d'\coHH^\bullet(D))$ associated to $d'\coHH^\bullet(D)$ under the Dold--Kan correspondence.  Then the simplicial pinch map $\psi$ induces a map
\[
\coHH_*(D)\Box_{D} \coHH_*(D) \to d'\coHH_*(D).
\]
We observe that the map $\pi\colon d'S^1_{\bullet} \to S^1_{\bullet}$, given by collapsing the second $\Delta^1$ to a point, induces an isomorphism 
\[
\coHH_*(D) \to d'\coHH_*(D).
\]
This can be proven by dualizing the proof in \cite[Lemma 2.2]{AR}. 
Thus, the simplicial pinch map $\psi$ induces a $\Box_D$-product
\[
\coHH_*(D)\Box_{D} \coHH_*(D) \to \coHH_*(D).
\]
We would like to see that this product agrees with the cochain level product from \cref{coHHbialg}.

\begin{prop}\label{prop:simplicialbialg}
If $\coHH_*(D)$ is coflat over $D$, the $\Box_D$-product 
\[\coHH_*(D)\Box_{D} \coHH_*(D) \to \coHH_*(D)\]
 induced by the simplicial pinch map $\psi\colon d'S^1_{\bullet} \to S^1_{\bullet} \vee S^1_{\bullet}$ agrees with the cochain level $\Box_D$-product defined in \cref{coHHbialg}. 
\end{prop}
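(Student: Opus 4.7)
The plan is to compare the two products directly at the cochain level. Since $\pi^*$ is a quasi-isomorphism, the simplicial pinch product equals the composite
\[\coHH_*(D) \Box_D \coHH_*(D) \xrightarrow{\psi^*} d'\coHH_*(D) \xleftarrow[\cong]{\pi^*} \coHH_*(D),\]
and I want to verify this matches the cochain-level product $\mu$ of \cref{coHHbialg}. Equivalently, it suffices to check that $\pi^* \circ \mu$ and $\psi^*$ agree as maps of cochain complexes (up to chain homotopy), after applying the Alexander--Whitney decomposition of \cref{prop:EZ} to identify $C^*(\coHH^\bullet(D) \Box_D \coHH^\bullet(D))$ with $C^*(\coHH^\bullet(D)) \Box_D C^*(\coHH^\bullet(D))$.

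First, I would unwind the combinatorics of the simplicial sets $d'S^1_\bullet$, $S^1_\bullet$, and $S^1_\bullet \vee S^1_\bullet$, together with the maps $\psi$ and $\pi$. The simplicial set $d'S^1_\bullet$ has two ``halves'' (two copies of $\Delta^1_\bullet$ joined at the gluing vertex and with the two free endpoints identified to a single basepoint); the pinch $\psi$ sends the first half to the first $S^1_\bullet$ and the second half to the second $S^1_\bullet$; and the collapse $\pi$ sends the second half to the basepoint. Cotensoring with $D$, the resulting cosimplicial maps at level $n$ are built from $\id$ and $\epsilon$ applied to the tensor factors corresponding to the simplices that are collapsed or identified. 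Applying Dold--Kan normalization and the Alexander--Whitney map then produces an explicit formula for $\psi^*$ on a decomposable element $(d_0 \otimes \cdots \otimes d_i) \otimes (d'_0 \otimes \cdots \otimes d'_j)$.

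A direct computation then compares this with $\pi^* \circ \mu$ applied to the same element, where by \cref{coHHbialg} we have $\mu\big((d_0 \otimes \cdots \otimes d_i) \otimes (d'_0 \otimes \cdots \otimes d'_j)\big) = \epsilon(d'_0) \cdot d_0 \otimes \cdots \otimes d_i \otimes d'_1 \otimes \cdots \otimes d'_j$. Conceptually, in both constructions the counit $\epsilon$ is applied exactly to the tensor factor corresponding to the ``identification vertex'' (the basepoint of the second $S^1$ after wedging, equivalently the middle gluing vertex of $d'S^1_\bullet$ after the endpoint collapse), and the remaining cochain is the concatenation of the two sequences. The main obstacle will be the careful bookkeeping required for the Alexander--Whitney decomposition: one must track signs, orderings, and contributions from degenerate simplices to verify that only the expected terms survive after normalization. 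This argument parallels and dualizes the algebra-case analysis in \cite[Lemma 2.2]{AR}, which should serve as a template for the detailed verification.
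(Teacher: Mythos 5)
Your proposal is correct and follows essentially the same route as the paper: both reduce the comparison to a cochain-level check mediated by the generalized Eilenberg--Zilber maps of \cref{prop:EZ} and obtain the verification by dualizing the Angeltveit--Rognes argument (the relevant template is \cite[Proposition 2.3]{AR}, the product comparison, rather than \cite[Lemma 2.2]{AR}, which is the collapse equivalence $\pi$). The only cosmetic difference is that you phrase the identification via the Alexander--Whitney direction while the paper uses the quasi-inverse shuffle map.
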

\begin{proof}
In work of Angeltveit and Rognes \cite{AR} they prove a dual result. In the dual case, one views Hochschild homology $\HH_{\bullet}(A)$ as the simplicial tensor with $S^1_{\bullet}$, $\HH_{\bullet}(A) = A \otimes S^1_{\bullet}$.  Angeltveit and Rognes then show that the simplicial pinch map induces a comultiplication on Hochschild homology
\[
\HH_*(A) \to \HH_*(A) \otimes_{A} \HH_*(A), 
\]
and verify that when $\HH_*(A)$ is flat over $A$, this comultiplication agrees with the one induced by the chain level comultiplication, $\psi\colon C_*(A) \to C_*(A) \otimes_A C_*(A)$ given by
\[
\psi(a_0 \otimes a_1 \otimes \cdots \otimes a_q) = \sum_{i=0}^q (a_0 \otimes a_1 \otimes \cdots \otimes a_i) \otimes_A (1 \otimes a_{i+1} \otimes \cdots \otimes a_q).
\] 
Their identification uses the shuffle equivalence 
\[
\sh\colon C_*(A) \otimes_A C_*(A) \to \textup{Ch}(A \otimes (S^1_\bullet \vee S^1_\bullet)).\]
The dual shuffle equivalence that we need,
\[
\sh\colon \textup{CoCh}(D^{S^1_\bullet\vee S^1_\bullet}) \to  C^*(D) \Box_D C^*(D)
\]
is defined in our generalized Eilenberg-Zilber theorem, \cref{prop:EZ}. Here $C^*(D)$ denotes the cochain complex for $\coHH_*(D)$, as in \cref{def:coHH1}. The statement in the proposition then follows by dualizing the proof of \cite[Proposition 2.3]{AR}.
\end{proof}

\begin{prop} If $\coHH_*(D)$ is coflat over $D$, then $\coHH_*(D)$ is a $\Box_D$-bialgebra.\label{coHHbialg2}
\end{prop}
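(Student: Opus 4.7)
The plan is to combine the $\Box_D$-coalgebra structure from \cref{prop:coHHcoalg} with the $\Box_D$-algebra structure from \cref{coHHbialg} and verify the four compatibility diagrams of \cref{def:boxbialg}. The essential trick is to work simplicially: by \cref{prop:coHHcoalg}, the comultiplication $\triangle\colon \coHH_*(D) \to \coHH_*(D) \Box_D \coHH_*(D)$ is induced on $D^{S^1_\bullet}$ by the fold $\nabla\colon S^1_\bullet\vee S^1_\bullet \to S^1_\bullet$, while by \cref{prop:simplicialbialg} the multiplication $\mu$ is induced (under the equivalence $\coHH_*(D)\cong d'\coHH_*(D)$) by the pinch map $\psi\colon d'S^1_\bullet \to S^1_\bullet\vee S^1_\bullet$. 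Both structure maps therefore come from pre-composition of the cotensor functor $D^{(-)}$ with simplicial maps of models of the circle.

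First I would dispose of the axioms involving the unit and counit. The unit $\eta\colon D\to \coHH_*(D)$ is the inclusion of $0$-cochains, induced by the collapse $S^1_\bullet\to \Delta^0_\bullet$; the counit $\epsilon\colon \coHH_*(D)\to D$ is the projection to $0$-cochains, induced in the simplicial model by including $\Delta^0_\bullet$ as a vertex of the pinched circle $d'S^1_\bullet$. With these identifications, axioms (2)--(4) of \cref{def:boxbialg} correspond to elementary commutative diagrams of simplicial sets (the counit absorbs the pinch, the unit splits through the fold, and the unit composed with the counit is the identity on $D$), so after applying $D^{(-)}$, passing to cochain complexes via Dold--Kan, and using the $\Box_D$-K\"unneth isomorphism of \cref{thrm:boxkunneth} (which requires coflatness), each becomes an identity on $\coHH_*(D)$.

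The main work is axiom (1), the bialgebra compatibility of $\triangle$ and $\mu$. Both composites $\triangle\circ\mu$ and $(\mu\Box\mu)\circ(\id\Box\tau\Box\id)\circ(\triangle\Box\triangle)$ are induced on the cosimplicial level by suitable composites of pinch and fold maps between the simplicial models $d'S^1_\bullet$, $S^1_\bullet\vee S^1_\bullet$, and $d'S^1_\bullet\vee d'S^1_\bullet$. Concretely, one shows that the two simplicial composites
\[
d'S^1_\bullet\vee d'S^1_\bullet \xrightarrow{\;\psi\vee\psi\;} (S^1_\bullet\vee S^1_\bullet)\vee (S^1_\bullet\vee S^1_\bullet) \xrightarrow{\;\id\vee\tau\vee\id\;} (S^1_\bullet\vee S^1_\bullet)\vee(S^1_\bullet\vee S^1_\bullet)\xrightarrow{\;\nabla\vee\nabla\;} S^1_\bullet\vee S^1_\bullet
\]
and
\[
d'S^1_\bullet\vee d'S^1_\bullet \xrightarrow{\;\nabla_{d'}\;} d'S^1_\bullet \xrightarrow{\;\psi\;} S^1_\bullet\vee S^1_\bullet
\]
agree up to an equivalence of simplicial circles that becomes the identity after applying $\coHH_*$. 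This is precisely the dual of the computation Angeltveit and Rognes use to establish the bialgebra compatibility for $\THH(R)$ in \cite{AR}, and I would obtain it by dualizing their argument: convert their simplicial statement about $S^1\otimes R$ into the corresponding cosimplicial statement about $D^{S^1_\bullet}$, then apply the shuffle/Alexander--Whitney equivalences of \cref{prop:EZ} and the K\"unneth isomorphism of \cref{thrm:boxkunneth}.

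The principal obstacle is ensuring that the multiple different simplicial models for $S^1$ entering the diagram (a single $S^1_\bullet$, a pinched $d'S^1_\bullet$, and wedges of each) can be coherently compared so that the bialgebra square does commute on the nose after passing to $\coHH_*(D)$. Coflatness of $\coHH_*(D)$ over $D$ is what lets the K\"unneth map be an isomorphism throughout, so that the cochain-level comparisons induce well-defined maps on $\coHH_*(D)\Box_D\coHH_*(D)$ and $\coHH_*(D)\Box_D\coHH_*(D)\Box_D\coHH_*(D)\Box_D\coHH_*(D)$ that one can compare directly; without this hypothesis, the two composites only agree in a derived sense.
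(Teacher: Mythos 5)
Your proposal is correct and follows essentially the same route as the paper's proof: both induce all four structure maps from simplicial maps of circle models ($\nabla$, $\psi$ on $d'S^1_\bullet$, collapse to and inclusion of the basepoint), reduce the unit/counit axioms to elementary diagrams of simplicial sets, and establish the main bialgebra compatibility via exactly the diagram comparing $\psi\circ\nabla_{d'}$ with $(\nabla\vee\nabla)\circ(\id\vee\tau\vee\id)\circ(\psi\vee\psi)$, dualizing Angeltveit--Rognes and using coflatness for the $\Box_D$-K\"unneth isomorphism. The only cosmetic difference is that the paper describes the counit as induced by $\ast\to S^1_\bullet$ rather than by a vertex of $d'S^1_\bullet$, which is immaterial under the equivalence $\coHH_*(D)\cong d'\coHH_*(D)$.
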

\begin{proof}
The multiplication, comultiplication, unit, and counit maps are induced by simplicial maps of the circle. In particular, there are simplicial maps: 
\begin{alignat*}{2}
\eta\colon& S^1_{\bullet} \to \ast & \epsilon\colon& \ast \to S^1_{\bullet} \\
\nabla\colon& S^1_{\bullet} \vee S^1_{\bullet} \to S^1_{\bullet}\qquad &
\psi\colon& d'S^1_{\bullet} \to S^1_{\bullet} \vee S^1_{\bullet},
\end{alignat*}
given by retraction to the basepoint, inclusion of the basepoint, fold, and pinch. Using the identification of $\coHH^{\bullet}(D)$ as the cosimplicial cotensor of $D$ with $S^1_{\bullet}$, these simplicial maps induce the following maps on $\coHH_*(D)$:
\begin{alignat*}{2}
\eta\colon& D \to \coHH_*(D) \\
 \epsilon\colon& \coHH_*(D) \to D \\
\triangle\colon& \coHH_*(D) \to \coHH_*(D) \Box_D \coHH_*(D)\\
\mu\colon& \coHH_*(D) \Box_D \coHH_*(D) \to \coHH_*(D),
\end{alignat*}
By \cref{prop:simplicialbialg} and the preceding discussion, these maps agree with the maps described concretely in \cref{prop:coHHcoalg,coHHbialg}. It remains to check that the appropriate diagrams commute. The coassociativity and counitality of the $\Box_D$-comultiplication $\triangle$ follow from the commutativity of the diagrams of simplicial sets below:
\[
\xymatrix{ S^1_{\bullet} & & S^1_{\bullet} \vee S^1_{\bullet} \ar[ll]_{\nabla} \\
S^1_{\bullet} \vee S^1_{\bullet} \ar[u]^{\nabla}& (S^1_{\bullet} \vee S^1_{\bullet})\vee S^1_{\bullet} \ar[l]_-{\nabla \vee \id} \ar[r]^{\cong} & S^1_{\bullet} \vee(S^1_{\bullet} \vee S^1_{\bullet}) \ar[u]_{\id \vee \nabla}
} 
\]
and
\[
\xymatrix{ & S^1_{\bullet} & \\
 S^1_{\bullet} \vee S^1_{\bullet} \ar[ur]^{\nabla} & S^1_{\bullet} \ar[l]_-{\epsilon \vee \id} \ar[u]_{\id} \ar[r]^-{\id \vee \epsilon} & S^1_{\bullet} \vee S^1_{\bullet} \ar[ul]_{\nabla}
}
\]
It is straightforward to verify that the product $\mu$ is associative and unital using the cochain level formulas in \cref{coHHbialg}. One can also show this using diagrams of simplicial sets, although it requires a triple model for the circle $S^1$. We omit these details here. 

Finally, to conclude that $\coHH_*(D)$ is a bialgebra, we need some compatibility between the $\Box_D$-algebra structure and the $\Box_D$-coalgebra structure, as described in \cref{def:boxbialg}. The compatibility of the multiplication and comultiplication follows from the following commutative diagram of simplicial sets:
\[\xymatrix{S^1_{\bullet}\vee S^1_{\bullet} & d'S^1_{\bullet} \ar[l]_{\psi} & d'S^1_{\bullet} \vee d'S^1_{\bullet} \ar[l]_{\nabla} \ar[d]^{\psi \vee \psi}\\
S^1_{\bullet}\vee S^1_{\bullet} \vee S^1_{\bullet}\vee S^1_{\bullet} \ar[u]^{\nabla \vee \nabla} & & S^1_{\bullet}\vee S^1_{\bullet} \vee S^1_{\bullet}\vee S^1_{\bullet} \ar[ll]_{\id \vee \tau \vee \id}
}
\]
where $\tau$ is a twist map that swaps two factors. Compatibility of the multiplication and counit, compatibility of the comultiplication and unit, and compatibility of the unit and counit also follow directly from commutative diagrams of simplicial sets. Checking these compatibility conditions is left to the reader. 
\end{proof}

As an example, we compute the $\Box_{\Lambda_k(x)}$-bialgebra structure on $\coHH_*(\Lambda_k(x))$. This example will play a key role in later computations.

\begin{prop}\label{prop:exteriorbialg}
Consider the exterior coalgebra $\Lambda_k(x)$, where $|x|=2n+1$. There is a $\Box_{\Lambda_k(x)}$-coalgebra isomorphism
\[
\coHH_*(\Lambda_k(x)) \cong \Lambda_k(x) \otimes  k[\sigma x],
\]
where $(\sigma x)^q$ is the cohomology class of the cocycle $1 \otimes x \otimes x \dots \otimes x \in C^q(\Lambda_k(x)).$ The $\Box_{\Lambda_k(x)}$-multiplication on classes $(\sigma x)^q$ is given by $\mu((\sigma x)^q \otimes (\sigma x)^s) = (\sigma x)^{q+s}$. 
\end{prop}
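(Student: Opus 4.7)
The plan is to compute $\coHH_*(\Lambda_k(x))$ directly via the normalized cochain complex and then extract both pieces of structure from the $\Box_{\Lambda_k(x)}$-bialgebra machinery in \cref{coHHbialg,coHHbialg2}.

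First I would identify the underlying graded $k$-module. The normalized subcomplex $N^*$ has $N^n = \Lambda_k(x) \otimes (\ker\epsilon)^{\otimes n}$, i.e.\ the $2$-dimensional $k$-vector space spanned by $1 \otimes x^{\otimes n}$ and $x \otimes x^{\otimes n}$. I would compute the coHochschild differential on each generator, retaining only the non-degenerate terms (those with no $1$ in positions $1$ through $n+1$). Since $\triangle(1) = 1 \otimes 1$ and $\triangle(x) = 1 \otimes x + x \otimes 1$, every summand of $d(1 \otimes x^{\otimes n})$ carries a $1$ in some position $\geq 1$, so $1 \otimes x^{\otimes n}$ is a normalized cocycle. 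For $d(x \otimes x^{\otimes n})$, the $i=0$ summand contributes a single non-degenerate term $1 \otimes x^{\otimes n+1}$, while the final twist summand $(-1)^{n+1}\tau_{1,n+1}(\triangle(x) \otimes x^{\otimes n})$ contributes $-1 \otimes x^{\otimes n+1}$ after accounting for the graded sign $(-1)^{(2n+1)^2 n} = (-1)^n$ of moving an odd-degree $x$ past $n$ other copies of $x$. These cancel, so $N^*$ has vanishing differential and $\coHH_*(\Lambda_k(x)) \cong \Lambda_k(x) \otimes k[\sigma x]$ as a graded $k$-module, with $(\sigma x)^q$ represented by the cocycle $1 \otimes x^{\otimes q}$.

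Since $\Lambda_k(x) \otimes k[\sigma x]$ is a direct sum of copies of $\Lambda_k(x)$ indexed by a $k$-basis of $k[\sigma x]$, it is coflat as a $\Lambda_k(x)$-comodule; by \cref{coHHbialg2} this endows $\coHH_*(\Lambda_k(x))$ with a $\Box_{\Lambda_k(x)}$-bialgebra structure. The cochain-level multiplication formula from the proof of \cref{coHHbialg} then gives at once
\[
\mu\bigl((1 \otimes x^{\otimes q}) \Box (1 \otimes x^{\otimes s})\bigr) = \epsilon(1) \cdot 1 \otimes x^{\otimes q} \otimes x^{\otimes s} = 1 \otimes x^{\otimes q+s},
\]
establishing the multiplication rule $\mu((\sigma x)^q, (\sigma x)^s) = (\sigma x)^{q+s}$. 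For the coalgebra structure, I would observe that $\sigma x$ is primitive by a degree count: $\sigma x$ lies in cohomological degree $1$ and in the augmentation ideal $I\coHH_*$, and the cohomological-degree-$1$ part of $I\coHH_* \Box_{\Lambda_k(x)} I\coHH_*$ vanishes because $1$ admits no decomposition $p + r$ with $p, r \geq 1$. The lemma preceding \cref{prop:primitive} then forces $\triangle(\sigma x) = 1 \Box \sigma x + \sigma x \Box 1$. Combined with the multiplication rule and the compatibility of $\mu$ and $\triangle$ in a $\Box_{\Lambda_k(x)}$-bialgebra (\cref{def:boxbialg}(1)), an induction on $q$ yields
\[
\triangle\bigl((\sigma x)^q\bigr) = \sum_{j=0}^{q} \binom{q}{j} (\sigma x)^j \Box (\sigma x)^{q-j}.
\]
The comultiplication on each $x \otimes x^{\otimes q}$ is then pinned down by the requirement that $\triangle$ is a map of $\Lambda_k(x)$-bicomodules, together with the right coaction $\rho(x \otimes x^{\otimes q}) = (\sigma x)^q \otimes x + (x \otimes x^{\otimes q}) \otimes 1$; this matches the $\Box_{\Lambda_k(x)}$-coalgebra structure on $\Lambda_k(x) \otimes k[\sigma x]$ supplied by \cref{prop:primitive}, completing the isomorphism.

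The main technical obstacle is the sign bookkeeping in Step 1: the twist $\tau_{1,n+1}$ reorders graded tensor factors with odd-degree entries, and the sign $(-1)^n$ produced by moving one $x$ past $n$ other $x$'s must combine correctly with the outer $(-1)^{n+1}$ in the differential to yield the precise cancellation of the two non-degenerate contributions to $d(x \otimes x^{\otimes n})$.
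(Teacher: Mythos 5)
Your core computation coincides with the paper's proof: the same identification of the normalized complex $NC^q \cong \Lambda_k(x)\{1\otimes x^{\otimes q}\}$ as a $\Lambda_k(x)$-module, the same observation that the inner cofaces vanish after normalization because $\triangle(x)=0$ in $\overline{D}\otimes\overline{D}$, and the same sign cancellation between $\delta_0$ and $(-1)^{q+1}\delta_{q+1}$ on $x\otimes x^{\otimes q}$ --- the paper records $\delta_{q+1}(x\otimes x^{\otimes q})=(-1)^q(1\otimes x^{\otimes q+1})$, which matches your Koszul-sign computation (though you should not use $n$ for both $|x|=2n+1$ and the cosimplicial degree). The multiplication formula is likewise read off the cochain-level product of \cref{coHHbialg} in both proofs; your explicit coflatness check is a point the paper leaves implicit, and is a welcome addition. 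Where you genuinely diverge is the coalgebra structure: the paper simply cites the comultiplication description of \cref{prop:coHHcoalg}, whereas you derive $\triangle(\sigma x)=\sigma x\Box 1+1\Box\sigma x$ from a degree count on $I\Box_{\Lambda_k(x)} I$ and then propagate by induction using the bialgebra compatibility of \cref{def:boxbialg}. That route is legitimate and has the advantage of not tracking the shuffle map through the E--Z/K\"unneth comparison. Two small caveats on that step: the assertion that bicomodule-linearity alone pins down $\triangle(x\otimes x^{\otimes q})$ needs one more word --- a comodule map into the cofree comodule $\Lambda_k(x)\otimes\bigl(k[\sigma x]\otimes k[\sigma x]\bigr)$ is determined by its composite with $\epsilon\otimes\id$, and it is an internal-degree count (nothing in $k[\sigma x]\otimes k[\sigma x]$ has cosimplicial degree $q$ and internal degree $(q+1)|x|$) that forces that composite to vanish on $x\otimes x^{\otimes q}$; and your right coaction formula omits the Koszul sign $(-1)^q$ on the term $(\sigma x)^q\otimes x$. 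Neither affects the conclusion.
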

\begin{proof}
Let $D$ denote the exterior coalgebra $\Lambda_k(x)$. Let $\overline{D}$ denote the kernel of the counit $\epsilon\colon D\to k$.  Recall that the normalized coHochschild cochain complex $(NC^*(D), d)$ is given in degree $q$ by:
\[
NC^q(D) = \bigcap_{i=0}^{q-1} \ker(\sigma_i) \cong D \otimes \overline{D}^{\otimes q}. 
\]
Hence,
\[
NC^q(D) \cong D\{1\otimes x^{\otimes q}\}.
\]
We now compute the coboundary map $d$ on the normalized cochain complex. Note that the coface maps $\delta_i$ for $0<i<q+1$ are zero because 
\[
\triangle(x) = 1\otimes x + x \otimes 1 = 0 \in \overline{D} \otimes \overline{D}.
\]  
Similarly,
\[
\delta_0(1\otimes x^{\otimes q}) = \delta_{q+1}(1\otimes x^{\otimes q}) =0 \in D \otimes \overline{D}^{\otimes q}.
\]
We now consider $\delta_{q+1}(x \otimes x^{\otimes q}).$ By definition,
\[
\delta_{q+1}=     \tau\circ (\triangle \otimes \id^{\otimes q}),
\]
where $\tau$ is the rotating isomorphism that brings the front factor to the end. Note that in this graded setting, $\tau$ incorporates a sign:
\[
\tau(D_{i_0} \otimes D_{i_1} \otimes \dots\otimes D_{i_{q+1}}) = (-1)^{i_0(i_1 + i_2 + \dots+ i_{q+1})} (D_{i_1} \otimes D_{i_2} \otimes \dots \otimes D_{i_{q+1}}\otimes D_{i_0}).
\]
Thus, $\delta_{q+1}(x\otimes x^{\otimes q}) = (-1)^q(1 \otimes x^{q+1})$ in $D \otimes \overline{D}^{\otimes (q+1)},$ and hence the sum 
\[\delta_0(x\otimes x^{\otimes q}) + (-1)^{q+1}\delta_{q+1}(x\otimes x^{\otimes q})\] is zero. From these calculations of coface maps we conclude that $d=0$ on $NC^q(D)$, so 
\[
\coHH_q(\Lambda_k(x)) \cong \Lambda_k(x)\{1 \otimes x^{\otimes q}\}.
\] 
We write $\sigma x$ for the class $1 \otimes x$ and $(\sigma x)^q$ for the class $1 \otimes x^{\otimes q}$. From the description of the comultiplication in \cref{prop:coHHcoalg} it then follows that there is an isomorphism of $\Box_{\Lambda_k(x)}$-coalgebras 
\[
\coHH_*(\Lambda_k(x)) \cong \Lambda_k(x) \otimes  k[\sigma x].
\]
The formula for multiplication on the classes  $(\sigma x)^q$ follows directly from the formulas for $\Box$-multiplication in \cref{coHHbialg}. 
\end{proof}

\section{Coalgebra spectra and topological coHochschild homology}\label{sec:coTHH}

In this section we establish the infinity categorical framework in which to discuss cocommutative coalgebra spectra and cocommutative $Hk$-coalgebras.  Recall that there is a symmetrical monoidal $\infty$-category of spectra, which we  denote $\Spec$.  We denote the symmetric monoidal $\infty$-category of $Hk$-modules by $\Mod_{Hk}$. As dual objects to algebras, coalgebras are then defined to be algebras in the opposite category.

\begin{defn}[{An instance of \cite[Definition 3.1.1]{LurieElliptic}}]\label{defncoalspectra}
A \emph{cocommutative coalgebra spectrum} $C$ is a commutative algebra in the opposite $\infty$-category $\Spec^\op$.  The $\infty$-category of cocommutative coalgebra spectra is defined to be $(\CAlg(\Spec^\op))^\op$, that is, the opposite  category of the category of commutative algebras in $\Spec^\op$. We denote this $\infty$-category by $\cC(\Spec)$. 
\end{defn}

Similarly, there is an $\infty$-category of cocommutative $Hk$-coalgebras. 
\begin{defn}[{\cite[Definition 3.1.1]{LurieElliptic}}]\label{defnHkcoalgspectra}
A \emph{cocommutative $Hk$-coalgebra} is a commutative algebra in the opposite $\infty$-category $\Mod_{Hk}^{\op}$. The $\infty$-category of cocommutative $Hk$-coalgebras is defined to be $(\CAlg(\Mod_{Hk}^\op))^\op$. We denote this $\infty$-category by $\cC(\Mod_{Hk}).$
\end{defn}

\begin{rem} Note that in a classical category $\mathcal{C}$, the opposite category of commutative algebras in $\mathcal{C}^\op$ is the category of cocommutative coalgebras in $\mathcal{C}$.
\end{rem}

In \cite[Proposition 3.1.4]{LurieElliptic}, Lurie shows that under presentability conditions on $\mathcal{C}$ and accessibility conditions on the monoidal product, the $\infty$-category of cocommutative coalgebra objects in $\mathcal{C}$ is presentable.  This applies to the $\infty$-categories of cocommutative $Hk$-coalgebra spectra and of cocommutative coalgebra spectra.  In particular, these categories have all small limits and colimits, and colimits of cocommutative ($Hk$)-coalgebras are preserved by the forgetful functor from cocommutative ($Hk$)-coalgebra spectra to spectra.  

However, limits of cocommutative coalgebra spectra in general do \emph{not} agree with limits in underlying spectra.  This is essentially because the smash product of spectra doesn't play nice with limits, in contrast to colimits. 

Further results of Lurie's \emph{Higher Algebra} show that the $\infty$-categories of coalgebra spectra of \cref{defncoalspectra,defnHkcoalgspectra} are symmetric monoidal.
\begin{prop}
The $\infty$-categories $\cC(\Spec)$ and $\cC(\Mod_{Hk})$ have Cartesian symmetric monoidal structures under which the (finite) Cartesian product is given on objects by the smash product of spectra.
\end{prop}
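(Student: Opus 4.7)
The plan is to combine two general results about $\infty$-categories. First, any $\infty$-category with finite products carries a canonical Cartesian symmetric monoidal structure (this is a basic construction in Lurie's \emph{Higher Algebra}). As the excerpt has already observed (following Lurie's \emph{Elliptic Cohomology II}), both $\cC(\Spec)$ and $\cC(\Mod_{Hk})$ are presentable, so they admit all small limits and in particular finite products; they therefore inherit Cartesian symmetric monoidal structures. This gives the first half of the statement.

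The remaining work is to identify the underlying spectrum of the Cartesian product of two cocommutative coalgebras $C$ and $D$ with their smash product. I would construct an explicit candidate: equip $C \wedge D$ with the comultiplication
\[
C \wedge D \xto{\Delta_C \wedge \Delta_D} (C \wedge C) \wedge (D \wedge D) \xto{\id \wedge \tau \wedge \id} (C \wedge D) \wedge (C \wedge D),
\]
which makes it a cocommutative coalgebra, together with projection maps $\pi_C = \id \wedge \epsilon_D$ and $\pi_D = \epsilon_C \wedge \id$ built from the counits of $C$ and $D$. I would then verify that $\pi_C$ and $\pi_D$ are morphisms of coalgebras and that the triple $(C \wedge D, \pi_C, \pi_D)$ satisfies the universal property of the product in $\cC(\Spec)$: given a third coalgebra $E$ with coalgebra maps $f\colon E \to C$ and $g\colon E \to D$, the required factorization is $(f \wedge g) \circ \Delta_E$, and uniqueness follows from the counit axioms together with the fact that $\pi_C, \pi_D$ are defined using counits. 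The identical argument applies verbatim to $\cC(\Mod_{Hk})$.

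This is a classical argument in ordinary $1$-categories (going back to Sweedler for coalgebras over a field), and its $\infty$-categorical upgrade is carried out by Lurie in \emph{Elliptic Cohomology II}, in the same section cited earlier for the definition of cocommutative coalgebra spectra. The main technical obstacle is not the existence of the candidate product but rather ensuring full $\infty$-categorical coherence, i.e.\ producing a Cartesian symmetric monoidal structure on the $\infty$-category rather than only on its homotopy category. I would invoke Lurie's general machinery for coalgebra objects in symmetric monoidal $\infty$-categories for this coherence, rather than attempting to assemble the higher coherence data by hand.
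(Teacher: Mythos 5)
Your proposal is correct, but it takes a more constructive route than the paper. The paper's proof is purely formal: it observes that $\cC(\Spec)=(\CAlg(\Spec^\op))^\op$, that op-duality exchanges Cartesian and coCartesian symmetric monoidal structures, and then quotes Lurie's \emph{Higher Algebra} Proposition 3.2.4.7, which says that the induced symmetric monoidal structure on $\CAlg$ of any symmetric monoidal $\infty$-category is coCartesian (i.e.\ the tensor product of commutative algebras is their coproduct). Dualizing immediately gives that the smash product of cocommutative coalgebras is their Cartesian product, with no need to write down the comultiplication on $C\wedge D$ or the projections. You instead build the candidate product explicitly and check the universal property by hand; that argument is sound at the level of homotopy categories (and it is the classical Sweedler argument, using cocommutativity for existence of the factorization $(f\wedge g)\circ\Delta_E$ and counitality for uniqueness), but as you yourself note, the entire difficulty in the $\infty$-categorical setting is promoting ``uniqueness'' to contractibility of the space of factorizations, and at that point you are forced back onto Lurie's machinery anyway --- which is exactly the step the paper's duality argument packages once and for all via HA 3.2.4.7. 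Two smaller remarks: the coherence result you want is in \emph{Higher Algebra} \S 3.2.4, not in the section of \emph{Elliptic Cohomology II} where cocommutative coalgebra spectra are defined (that reference supplies the definition and presentability, not the Cartesian-ness of the monoidal structure); and your first step (presentability $\Rightarrow$ finite products $\Rightarrow$ a Cartesian symmetric monoidal structure exists) is fine but carries little content on its own --- the substance of the proposition is the identification of that product with the smash product, and the paper's approach establishes both simultaneously by showing the smash-product symmetric monoidal structure already present on $\cC(\Spec)$ is itself Cartesian. The gain of your approach is that it makes the projections and the diagonal completely explicit, which is genuinely useful later (the paper repeatedly uses exactly these formulas, e.g.\ in \cref{prop:primitive}); the gain of the paper's approach is brevity and the fact that it never has to address higher coherence by hand.
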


\begin{proof}
As observed in \cite[Remark 2.4.2.7]{lurieHA}, if $\mathcal{C}$ is a symmetric monoidal $\infty$-category, there is an induced symmetric monoidal structure on $\mathcal{C}^\op$.  Just as in the classical case, as discussed in \cite[\S 2.4.3]{lurieHA}, this duality takes Cartesian symmetric monoidal structures to coCartesian symmetric monoidal  structures.  Hence, it suffices to prove that $\CAlg(\Spec^\op)$ and $\CAlg(\Mod_{Hk}^\op)$ have the structure of coCartesian symmetric monoidal $\infty$-categories.  This result, in turn, is \cite[Proposition 3.2.4.7]{lurieHA}, which shows that for commutative algebras, the underlying symmetric monoidal product is the coproduct.
\end{proof}

Given a cocommutative coalgebra $C\in \cC(\Spec)$, one might ask for a nice $\infty$-category of comodules over $C$, in which to define the analogs of $\Box_C$-coalgebras and the related algebraic structures of  \cref{sec:structures}. While the dual construction of $\infty$-categories of modules over algebras has been extensively developed by Lurie \cite{lurieHA}, these constructions do not readily dualize because the smash product in spectra does not commute with totalization.  Hence we do not generally have a suitable symmetric monoidal $\infty$-category of $C$-comodules. In later sections, we largely avoid this point by working in the special case of $Hk$-coalgebras, where the Dold--Kan correspondence allows us to transfer questions to the realm of (co)chain complexes. This setting is also the subject of P\'eroux's rigidification results \cite{perouxrigid}.

 For a general cocommutative coalgebra $C$, we can also recover an analog of the structure of a $\Box_C$-coalgebra via the viewpoint of \cref{boxprodforcoalg}, which remarks that the box-product $B_1\Box_C B_2$ of coalgebras over $C$ can be defined via pullback over $C$.  
\begin{prop}\label{inftyboxprodstructurecoalg}
Let $C\in\cC(\Spec)$.  Then the over category  $\cC(\Spec)_{/C}$  is a Cartesian monoidal $\infty$-category where the monoidal product $B_1\Box_C B_2$ of objects $B_1\to C$ and $B_2\to C$ is the pullback in $\cC(\Spec)$:
\[ \xymatrix{ B_1\Box_C B_2 \ar[r]\ar[d] & B_2\ar[d]\\
B_1\ar[r] & C.}\]
Hence every object of $\cC(\Spec)_{/C}$ is a cocommutative $\Box_C$-coalgebra spectrum.
\end{prop}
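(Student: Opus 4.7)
The plan is to assemble this proposition from three standard facts about $\infty$-categories: existence of pullbacks in $\cC(\Spec)$, the fact that slice categories of categories with finite limits inherit Cartesian monoidal structures via pullback, and the universal fact that every object in a Cartesian monoidal $\infty$-category is canonically a cocommutative comonoid.

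First, I would invoke Lurie's presentability result cited in the paragraph preceding \cref{defncoalspectra}: since $\cC(\Spec) = (\CAlg(\Spec^\op))^\op$ is presentable, it admits all small limits and in particular pullbacks. (Equivalently, by the quoted remark that colimits in $\CAlg(\Spec^\op)$ are computed in $\Spec^\op$, pullbacks in $\cC(\Spec)$ exist as the opposites of pushouts of commutative algebras in $\Spec^\op$.) This ensures the pullback diagram defining $B_1\Box_C B_2$ actually makes sense inside $\cC(\Spec)$.

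Next, I would appeal to the general $\infty$-categorical fact that for any $\infty$-category $\mathcal{D}$ admitting pullbacks and any object $C\in\mathcal{D}$, the slice category $\mathcal{D}_{/C}$ admits finite products, with terminal object $\id_C$ and product of $B_1\to C$ and $B_2\to C$ given by the pullback $B_1\times_C B_2$ in $\mathcal{D}$ (HTT \S 1.2.13, together with the fact that limits in slice categories are computed by the corresponding limits in the ambient category over the cone). Applied to $\mathcal{D}=\cC(\Spec)$ and $C\in\cC(\Spec)$, this produces the desired finite-product structure on $\cC(\Spec)_{/C}$ with Cartesian product precisely the $\Box_C$-product.

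Any $\infty$-category with finite products carries a canonical Cartesian symmetric monoidal structure (\cite[\S 2.4.1]{lurieHA}), so $\cC(\Spec)_{/C}$ is a Cartesian monoidal $\infty$-category whose monoidal product is $\Box_C$. Finally, in any Cartesian symmetric monoidal $\infty$-category, every object is canonically a cocommutative comonoid: the comultiplication is the diagonal induced by the universal property of the product, and the counit is the unique map to the terminal object (\cite[Corollary 2.4.1.9]{lurieHA}). Applied here, the terminal object of $\cC(\Spec)_{/C}$ is $\id_C$, so the counit of an object $B\to C$ is the structure map itself, while the comultiplication $B\to B\Box_C B$ is the diagonal over $C$. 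These data give $B$ the structure of a $\Box_C$-coalgebra spectrum as required.

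The main potential obstacle is purely bookkeeping: confirming that the appropriate slice-category and Cartesian-monoidal constructions from HTT/HA apply verbatim to the $\infty$-category $\cC(\Spec) = (\CAlg(\Spec^\op))^\op$, rather than a subtle mathematical difficulty. The only nontrivial input is that $\cC(\Spec)$ has pullbacks, which is exactly what the presentability statement already quoted in the paper provides.
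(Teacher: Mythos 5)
Your proposal is correct and follows essentially the same route as the paper: existence of the relevant pullbacks, the slice category acquiring finite products computed as pullbacks, the induced Cartesian symmetric monoidal structure from \cite[\S 2.4.1]{lurieHA}, and the fact that every object of a Cartesian monoidal $\infty$-category is canonically a cocommutative comonoid. The only difference is presentational: the paper passes through the op-dual coslice $\CAlg(\Spec^\op)_{C^\op/}$ (verifying the identification via the join/slice adjunction) so as to quote Lurie's coCartesian statements, in particular \cite[Corollary 2.4.3.10]{lurieHA}, whereas you stay on the coalgebra side and invoke the Cartesian duals directly --- note that the precise reference for ``every object is a cocommutative comonoid'' is the op-dual of \cite[Corollary 2.4.3.10]{lurieHA} rather than a statement in \S 2.4.1.
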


\begin{proof}
For concreteness, we use Joyal's definition of the $\infty$-category $\cC(\Spec)_{/C}$ of cocommutative coalgebra spectra over $C$ \cite{Joyal02}. That is, we view $C$ as a diagram $C\colon \bbone\to \cC(\Spec)$, where $\bbone$ is the category with one object and only the identity morphism.  Then  $\cC(\Spec)_{/C}$ is the $\infty$-category defined by the property that for a simplicial set $Y$, 
\[ \Map(Y,\cC(\Spec)_{/C})\simeq \Map_{\bbone}(Y\star \bbone, \cC(\Spec)).\]
The subscript on the right-hand side indicates that we consider only those maps $f\colon Y \star \bbone \to \cC(\Spec)$ such that $f|_{\bbone}$ is $C$. The $\infty$-category $Y\star \bbone$ is the join of the simplicial sets $Y$ and $\bbone$, as defined in \cite{Joyal02}.  Since $\bbone$ is a single point, this operation produces the ``cocone'' on an $\infty$-category $Y$. 

Since the category $\cC(\Spec)$ is defined as the opposite of $\CAlg(\Spec^\op)$, by passing to opposite categories, it suffices to show that the under category $\CAlg(\Spec^\op)_{C^\op/}$ admits the structure of a coCartesian monoidal $\infty$-category.  More precisely, for an arbitrary simplicial set $Y$, the join/slice adjunction \cite[Proprosition 4.2.5]{RVelements} and op-duality provide a string of equivalences 
\begin{align*}
\Map(Y,\cC(\Spec)_{/C}) &\simeq \Map_{\bbone}(Y\star \bbone, \cC(\Spec))\\
&\simeq \Map_{\bbone}(\bbone\star (Y^\op), \CAlg(\Spec^\op))\\
&\simeq \Map(Y^\op, \CAlg(\Spec^\op)_{C^\op/})\\
&\simeq \Map(Y, (\CAlg(\Spec^\op)_{C^\op/})^\op)
\end{align*}
which shows that $\CAlg(\Spec^\op)_{C^\op/}$ is the opposite of $\cC(\Spec)_{/C}$.

In \cite[Section 2.4.3]{lurieHA}, Lurie defines an $\infty$-operad $\mathcal{C}^\amalg$ on an arbitrary $\infty$-category $\mathcal{C}$ and shows this is a symmetric monoidal $\infty$-category structure on $\mathcal{C}$ if and only if $\mathcal{C}$ admits finite coproducts.  In the case of the under category $\CAlg(\Spec^\op)_{C^\op/}$, coproducts are given by pushouts in $\CAlg(\Spec^\op)$,
which in turn are pullbacks in $\cC(\Spec)$ by \cite[Proposition 12.1.7]{RVelements}.  Since $\cC(\Spec)$ is complete, the necessary pullbacks exist.  Thus $\cC(\Spec)_{/C}$ is a Cartesian monoidal $\infty$-category.

Finally, to justify the statement that every object in $\cC(\Spec)_{/C}$ is a cocommutative coalgebra over $C$,  observe that the $\infty$-category 
 \[\cC\big(\cC(\Spec)_{C}\big)\] 
of cocommutative coalgebras in $\cC(\Spec)_{/C}$ is the opposite category of 
\[\CAlg\big((\cC(\Spec)_{C})^\op\big)\] 
of commutative algebras in the opposite category, as in \cite[Definition 3.1.1]{LurieElliptic} and \cref{defncoalspectra,defnHkcoalgspectra}.  Since $(\cC(\Spec)_{/C})^\op$ is coCartesian monoidal, by \cite[Corollary 2.4.3.10]{lurieHA}, every object is a commutative algebra object, and hence every object of $\cC(\Spec)_{/C}$ is a cocommutative coalgebra object.
\end{proof}

We next describe the topological coHochschild homology of a coalgebra spectrum in this framework.   We begin with some basic recollections about Cartesian monoidal $\infty$-categories.

Let $\mathcal{C}$ be a Cartesian monoidal $\infty$-category; in particular this implies $\mathcal{C}$ admits finite products.  For a finite set $X$ and an object $C\in \mathcal{C}$, the $X$-fold product $\prod_X C$ provides a model for the categorical cotensor $C^X$: we have an equivalence of mapping spaces
\[ \Map_\mathcal{C}\big(D, {\textstyle\prod\limits_X C}\big)\simeq (\Map_\mathcal{C}(D,C))^X.\]
Indeed, if $\mathcal{C}$ admits $X$-fold products for an arbitrary set $X$, we may similarly model a cotensor $C^X$ as a product.

This construction is contravariantly functorial in $X$, in the following sense. Suppose $\mathcal{C}$ admits products indexed by any set $X\in \Set$. We may then obtain a map of $\infty$-categories $\Set^\op\xto{C^-} \mathcal{C}$.  As shown in \cite[Proposition 1.1.11]{RVelements}, there is an adjunction
\[\sset(\Set^\op,\mathcal{C})\cong \Cat(\Set^\op,h\mathcal{C})\]
where $h\mathcal{C}$ is the homotopy category of $\mathcal{C}$ and we are, as usual, omitting notation for the nerve of the category $\Set^\op$ on the left. By \cite[Lemma 2.3.3]{RVelements} for a set $X$, $X$-fold products in $\mathcal{C}$ are also products in $h\mathcal{C}$, and so $h\mathcal{C}$ admits $X$-fold products.  The usual argument from the universal properties of products then shows that the assignment $X\mapsto C^X$ is a contravariant functor $\Set^\op\to h\mathcal{C}$; by adjunction we thus have the desired functor of $\infty$-categories.

Given a simplicial set $X_\bullet$ and an object $C$ in an $\infty$-category admitting products, we obtain a $\Delta$-shaped diagram in $C$ via the composite
\[\Delta\xto{X^\op_\bullet}\Set^\op \xto{C^-}\mathcal{C}.\]
This is a cosimplicial object in $\mathcal{C}$ which we denote by $C^{X_\bullet}$ and which we view as the ``cosimplicial cotensor'' of $X_\bullet$ and $C$.   When $\mathcal{C}$ is complete, we may take the limit $\lim_{\Delta} C^{X_\bullet}$ to obtain an object of $\mathcal{C}$. 
  We sometimes use the notation $C^X=\lim_{\Delta} C^{X_\bullet}$, where we remove the $\bullet$ to emphasize that it is a single object of $\mathcal{C}$, rather than a cosimplicial object.  This limit can alternately be described as the totalization of the cosimplicial object $C^{X_\bullet}$.

In the case where $X_\bullet$ is the simplicial set $\Delta^1_\bullet$, this process gives a ``resolution'' of the object $C$.
\begin{lemma}\label{cotensorwithDeltaoneinftycate}
Let $\mathcal{C}$ be a Cartesian monoidal $\infty$-category.  For an object $C\in \mathcal{C}$, let $\Delta\to \mathcal{C}$ be the cosimplicial object of $\mathcal{C}$ given by 
\[ \Delta\xto{(\Delta^1_\bullet)^\op} \Set^\op \xto {C^-}\mathcal{C}.\]
Then $C\simeq \lim_{\Delta} C^{\Delta^1_\bullet}$.
\end{lemma}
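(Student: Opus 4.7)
The plan is to verify the equivalence via the $\infty$-categorical Yoneda lemma, reducing the statement to the classical fact that $|\Delta^1| \simeq \ast$. I would begin by fixing an arbitrary test object $Y \in \mathcal{C}$ and noting that, since representable functors preserve limits,
$$\Map_{\mathcal{C}}\bigl(Y, \lim_{\Delta} C^{\Delta^1_\bullet}\bigr) \simeq \lim_{\Delta} \Map_{\mathcal{C}}(Y, C^{\Delta^1_\bullet}).$$
The defining property of the cotensor recalled just before the lemma gives $\Map_{\mathcal{C}}(Y, C^S) \simeq \Map_{\mathcal{S}}(S, \Map_{\mathcal{C}}(Y, C))$ for every set $S$, natural in $S$, where $\mathcal{S}$ denotes the $\infty$-category of spaces. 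Setting $Z := \Map_{\mathcal{C}}(Y, C)$ and applying this levelwise to the cosimplicial set $\Delta^1_\bullet$ yields an equivalence of cosimplicial spaces $\Map_{\mathcal{C}}(Y, C^{\Delta^1_\bullet}) \simeq \Map_{\mathcal{S}}(\Delta^1_\bullet, Z)$.

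Next I would invoke the classical identity
$$\lim_{[n] \in \Delta} \Map_{\mathcal{S}}(X_n, Z) \simeq \Map_{\mathcal{S}}(|X_\bullet|, Z),$$
valid for any simplicial set $X_\bullet$, which comes from expressing $|X_\bullet|$ as the colimit over $\Delta^{\op}$ of its discrete sets of $n$-simplices and using that mapping out of a colimit is a limit. Specializing to $X_\bullet = \Delta^1_\bullet$ and recalling $|\Delta^1| \simeq \ast$ gives $\lim_{\Delta} \Map_{\mathcal{S}}(\Delta^1_\bullet, Z) \simeq Z$. Concatenating the chain of equivalences produces $\Map_{\mathcal{C}}(Y, \lim_{\Delta} C^{\Delta^1_\bullet}) \simeq \Map_{\mathcal{C}}(Y, C)$ naturally in $Y$, and the Yoneda lemma then yields the asserted equivalence $C \simeq \lim_{\Delta} C^{\Delta^1_\bullet}$, concretely realized by the map induced from the collapse $\Delta^1_\bullet \to \Delta^0_\bullet$.

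The main technical obstacle is upgrading the levelwise Yoneda equivalence for the cotensor to a genuine equivalence of cosimplicial spaces, i.e., verifying naturality with respect to the simplicial structure maps of $\Delta^1_\bullet$. This reduces to the functoriality of the universal property defining $C^{(-)}$: because the cotensor is characterized up to equivalence by the stated mapping space identity, the cosimplicial structure on $C^{\Delta^1_\bullet}$ is identified with the one on $\Map_{\mathcal{S}}(\Delta^1_\bullet, Z)$ through $\Map_{\mathcal{C}}(Y, -)$, and the remaining steps are formal manipulations with limits.
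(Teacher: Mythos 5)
Your argument is correct in substance but takes a genuinely different route from the paper's. The paper's proof observes that $\Delta^1_\bullet=\Delta(-,[1])$ is representable and therefore extends to a split augmented simplicial set over the category $\Delta_\bot$ with ``extra degeneracies''; cotensoring produces a split coaugmented cosimplicial object in $\mathcal{C}$, and the general fact that such objects have their limit given by evaluation at the coaugmentation (\cite[Proposition 2.3.15]{RVelements}) identifies the limit with $C$ with no computation of mapping spaces. You instead test against an arbitrary $Y\in\mathcal{C}$, use the universal property of the cotensor to rewrite the cosimplicial mapping space as $\Map_{\mathcal{S}}(\Delta^1_\bullet,\Map_{\mathcal{C}}(Y,C))$, and conclude from the weak contractibility of $\lvert\Delta^1\rvert$. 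Each approach buys something: the extra-degeneracies argument gives existence of the limit for free in any $\infty$-category (no completeness needed) and shows the limit is preserved by arbitrary functors, which is quietly useful when the same resolution is redeployed in other categories as in \cref{doublecirclearefine}; your argument uses only that $\lvert\Delta^1\rvert\simeq\ast$, so it would apply verbatim to any weakly contractible simplicial set with finite levels, not just a representable one. Two points to tighten. First, your opening step ``representable functors preserve limits'' presupposes that $\lim_\Delta C^{\Delta^1_\bullet}$ exists, which is not automatic since $\mathcal{C}$ is only assumed Cartesian monoidal; you should instead show that the cone induced by the collapse $\Delta^1_\bullet\to\Delta^0_\bullet$ makes $\Map_{\mathcal{C}}(Y,C)\to\lim_\Delta\Map_{\mathcal{C}}(Y,C^{\Delta^1_\bullet})$ an equivalence for every $Y$ --- this is precisely the assertion that $C$ is the limit and settles existence at the same time. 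Second, the coherence issue you flag (promoting the levelwise identification $\Map_{\mathcal{C}}(Y,C^{X})\simeq\Map_{\mathcal{S}}(X,\Map_{\mathcal{C}}(Y,C))$ to an equivalence of cosimplicial spaces) is real and nontrivial given that the paper only constructs $C^{-}$ by lifting a functor valued in the homotopy category; the paper's proof avoids this entirely, which is part of why it is the shorter road here.
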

\begin{proof}
The simplicial set $\Delta^1_\bullet$ is the representable functor $\Delta(-,[1])$.  This extends to a split augmented simplicial object $\Delta_\bot^\op\to \Set$ with ``extra degeneracies.''  Here $\Delta_\bot$ is the category obtained from $\Delta$ by adding an ``extra degeneracy'' $\sigma^{-1}\colon [n+1]\to [n]$ for each $n$, including a coaugmentation $[0]\to [-1]$.  The inclusion functor $\Delta\to\Delta_\bot$ has a right adjoint \cite[Example B.5.2]{RVelements} and the fact that $\Delta^1_\bullet$ extends to a presheaf on $\Delta_\bot$ follows from the fact that the representing object $[1]$ is in the image of the right adjoint.

 We thus obtain a split coaugmented cosimplicial object
\[ \Delta_\bot \to \Set^\op \xto{C^-} \mathcal{C}\]
that restricts to $C^{\Delta^1_\bullet}$. By \cite[Proposition 2.3.15]{RVelements}, any cosimplicial object that admits a splitting and coaugmentation has a limit given by the coaugmentation---that is, by evaluation at $[-1]\in \Delta_\bot$.  In this case, by construction the coaugmentation is simply the 1-fold product of $C$, meaning the limit is $C$ itself. 
\end{proof}

Since the $\infty$-category $\cC(\Spec)$ is complete and cocomplete, the totalization of any of the cosimplicial objects $C^{X_\bullet}$ exists.  In discussing coalgebras, however, we frequently work at the cosimplicial level rather than passing all the way to the totalization.

\begin{defn}\label{defncosimplcoTHH}
Let $S^1_\bullet$ be the standard model of the simplicial circle as $\Delta^1_\bullet/\partial\Delta^1_\bullet$.  
Let $C$ be in $\cC(\Spec)$.  Then there is a cosimplicial cocommutative coalgebra spectrum $\coTHH^\bullet(C)$ defined by 
\[ \Delta\xto{(S^1_\bullet)^\op} \Set^\op\xto{C^-} \cC(\Spec). \]
\end{defn}

The forgetful functor $\cC(\Spec)\to \Spec$ allows us to view $\coTHH^\bullet(C)$ simply as a cosimplicial spectrum as well. Viewing $\coTHH^\bullet(C)$ as a cosimplicial cotensor will allow us to induce structure on $\coTHH^\bullet(C)$ via maps of the simplicial circle $S^1_{\bullet}$, as we did for $\coHH^\bullet(D)$ in \cref{sec:coHH}. 

\begin{defn}\label{defnspectralcoTHH}
Given a cocommutative coalgebra spectrum $C$, define the spectrum $\coTHH(C)$ to be the (homotopy) limit \emph{in spectra} of the cosimplicial spectrum constructed in \cref{defncosimplcoTHH}:
\[ \coTHH(C)=\lim_\Delta \coTHH^\bullet(C)\]
That is, $\coTHH(C)$ is the totalization in spectra of $\coTHH^\bullet(C)$.
\end{defn}

\begin{rem} In the case where $C$ is a coalgebra spectrum in one of the standard model categories of spectra, \cref{defncosimplcoTHH} reproduces the cosimplicial spectrum $\coTHH^\bullet(C)$ whose totalization defined $\coTHH(C)$ in \cite[Definition 2.2]{BGHSZ} and in \cite{hs.coTHH}.  Hence \cref{defnspectralcoTHH} agrees with the construction of $\coTHH(C)$ in \cite{BGHSZ} and \cite{hs.coTHH}.  

The essential point is that in coalgebra spectra, the comultiplication $C\to C\sma C$ is the universal diagonal map from an object to its two-fold Cartesian product---this is why the symmetric monoidal smash product on coalgebras is the Cartesian product.  Similarly, the empty product is the unit object $S$ for $\sma$ and the map to this terminal object is the counit on $C$.  One then checks that the face and degeneracy maps of $S^1_\bullet$ induce the coface and codegeneracy maps of \cite[Definition 2.2]{BGHSZ}.  See the discussion in \cite[\S 4]{BGHSZ} for more details on this identification.
\end{rem}

\begin{rem}
An alternate way of viewing $\coTHH(C)$ and the cyclic cobar complex $\coTHH^\bullet$ in an $\infty$-categorical framework is supplied by Bay\i nd\i r--P\'eroux \cite{bayindirperoux} who use it to analyze duality relating $\THH$ and $\coTHH$.
\end{rem}

\begin{rem}
In \cref{defnspectralcoTHH}, taking the limit in spectra---as opposed to in cocommutative coalgebra spectra---may seem unnatural, especially to those familiar with the dual case, where McClure, Schw\"{a}nzl, and Vogt \cite{MSV97} prove that for a commutative ring spectrum $A$ there is an equivalence of commutative ring spectra $\THH(A)\simeq A\otimes S^1$. 
In a simplicial model category of coalgebra spectra, the totalization of $\coTHH^\bullet(C)$ would produce the cotensor of $C$ with $S^1_\bullet$.
However, in contrast to the situation for geometric realization of (commutative) algebra spectra, totalization in cocommutative coalgebra spectra does not generally agree with totalization in spectra.  The key difference, of course, is that smash product of spectra commutes with geometric realization but not with totalization.   Hence $\Tot_{\cC(\Spec)}(\coTHH^\bullet(C))\not\simeq \coTHH(C)$.
 
Totalizing in spectra in \cref{defnspectralcoTHH} means that our discussion of $\coTHH$ in this paper agrees with that of \cite{BGHSZ} and extends the work of \cite{hps,hs.coTHH}.  In particular, the coB\"okstedt spectral sequence defined in \cite{BGHSZ}, and further discussed in \cref{sect:thespectralsequence,sect:computations} of this paper, arises from the Bousfield--Kan spectral sequence for totalization in spectra; the results of \cite{BGHSZ} would not apply to a totalization in cocommutative coalgebra spectra.

Furthermore, while the present work focuses on the case of cocommutative coalgebra spectra, the construction of $\coTHH$ generalizes that of $\coHH$, which can be applied to not-necessarily-cocommutative  coalgebras, as is laid out in \cite{BGHSZ}.  Given that totalization in cocommutative coalgebras, coassociative coalgebras, and underlying spectra need not agree, the uniform way to produce $\coTHH$ for all flavors of coalgebra is by totalizing in underlying spectra.
\end{rem}

\section{Coalgebra structure and free loop spaces}\label{sec:freeloop}

The definition of $\coTHH$ as a totalization in spectra, rather than in coalgebra spectra, means that many of the good formal properties enjoyed by $\THH$ do not necessarily carry over to this dual setting.  At heart, this issue arises from the inevitable failure of totalization to commute with smash product.  This means that even for a cocommutative coalgebra spectrum $C$, we cannot easily show that $\coTHH(C)$ is itself a coalgebra spectrum, for example.  However, for the important example of suspension spectra, $\coTHH$ is much better behaved.  In the case of a simply connected space $X$, $\coTHH(\Sigma^\infty_+X)$ is coalgebra spectrum, namely, as shown by Malkiewich \cite{malkiewich} and Hess--Shipley \cite{hs.coTHH},  $\coTHH(\Sigma^\infty_+X)\simeq \Sigma^\infty_+ \freeloops X$.   This identification, on which we elaborate in the following lemma, means that we can obtain significantly more algebraic structure on $\coTHH$ in the suspension spectrum case than in general.

\begin{lemma}\label{lem:comultfreeloops}
Let $X$ be a simply connected space.  Then $\coTHH(\Sigma^\infty_+X)$ has the structure of a coalgebra spectrum with comultiplication induced by the fold map on $S^1_{\bullet}\amalg S^1_{\bullet}$. 
\end{lemma}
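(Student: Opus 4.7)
The plan is to leverage the equivalence $\coTHH(\Sigma^\infty_+X) \simeq \Sigma^\infty_+ \freeloops X$ of Malkiewich and Hess--Shipley, and to show that the standard diagonal coalgebra structure on the suspension spectrum of $\freeloops X$ corresponds exactly to the structure induced at the cosimplicial level by the fold map on $S^1_\bullet \amalg S^1_\bullet$. Every suspension spectrum $\Sigma^\infty_+ Y$ is canonically a cocommutative coalgebra spectrum with comultiplication induced by the diagonal $Y \to Y \times Y$; applied to $Y = \freeloops X$ this is the target coalgebra structure on $\coTHH(\Sigma^\infty_+X)$.

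I would first work at the cosimplicial level. Because $\cC(\Spec)$ is Cartesian monoidal with Cartesian product given on the underlying spectra by the smash product, the cosimplicial cotensor construction sends disjoint unions of simplicial sets to smash products. Concretely, for any cocommutative coalgebra spectrum $C$ there is a levelwise equivalence
\[
C^{S^1_\bullet \amalg S^1_\bullet} \simeq C^{S^1_\bullet} \wedge C^{S^1_\bullet}
\]
of cosimplicial cocommutative coalgebra spectra, and so the fold map $\nabla\colon S^1_\bullet \amalg S^1_\bullet \to S^1_\bullet$ induces a map
\[
\coTHH^\bullet(C) \longrightarrow \coTHH^\bullet(C) \wedge \coTHH^\bullet(C)
\]
of cosimplicial coalgebra spectra.

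The main obstacle is that totalization in spectra does not commute with the smash product, so in general this cosimplicial comultiplication need not descend to a comultiplication on the totalization $\coTHH(C)$. For $C = \Sigma^\infty_+ X$, however, the difficulty is resolved by passing through spaces: the cosimplicial spectrum $\coTHH^\bullet(\Sigma^\infty_+X)$ is $\Sigma^\infty_+$ applied to the cosimplicial space $X^{S^1_\bullet}$, whose totalization is the free loop space $\freeloops X$. Under the simply connected hypothesis, the Malkiewich/Hess--Shipley equivalence identifies $\coTHH(\Sigma^\infty_+X)$ with $\Sigma^\infty_+\freeloops X$, so totalization in spectra agrees with $\Sigma^\infty_+$ of the totalization in spaces. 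Tracing through this identification, the cosimplicial comultiplication induced by $\nabla$ becomes $\Sigma^\infty_+$ of the diagonal $\freeloops X \to \freeloops X \times \freeloops X$ (which is itself induced by the fold on $S^1$), yielding the canonical suspension-spectrum coalgebra structure, as required.
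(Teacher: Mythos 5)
Your proposal is correct and follows essentially the same route as the paper: both circumvent the failure of totalization to commute with smash products by using Malkiewich's equivalence $\Sigma^\infty_+\Tot_{\Top}(X^{S^1_\bullet})\simeq \Tot_{\Spec}(\Sigma^\infty_+(X^{S^1_\bullet}))$ (applied also to $X^{S^1_\bullet\amalg S^1_\bullet}\cong (X\times X)^{S^1_\bullet}$) to identify the fold-induced map with $\Sigma^\infty_+$ of the diagonal on $\freeloops X$. The only thing you leave unsaid is the counit (induced by the collapse $X_+\to S^0$) and the verification of the coalgebra axioms at the cosimplicial level, which the paper supplies but which are routine given your setup.
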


\begin{proof}
Let $X$ be a simply connected space.  Consider the cosimplicial space $X^{S^1_\bullet \amalg S^1_\bullet}$.  By inspection,  we have an isomorphism of cosimplicial spaces
\[X^{S^1_\bullet\amalg S^1_\bullet}\cong X^{S^1_\bullet}\times X^{S^1_\bullet}.\]
Since totalization commutes with products, we find
\[\Tot_\Top(X^{S^1_\bullet\amalg S^1_\bullet})\cong \Tot_\Top(X^{S^1_\bullet})\times \Tot_\Top(X^{S^1_\bullet}).\]
The fold map $\nabla\colon S^1_\bullet\amalg S^1_\bullet\to S^1_\bullet$ induces a morphism of simplicial spaces $X^{S^1_\bullet}\to X^{S^1_\bullet \amalg S^1_\bullet}$, which, after totalization, induces the diagonal map 
\[ \Tot_\Top(X^{S^1_\bullet})\to  \Tot_\Top(X^{S^1_\bullet})\times \Tot_\Top(X^{S^1_\bullet}).\]
Since totalization $\Tot_\Top(X^{S^1_\bullet})$ can be identified with the free loop space $\freeloops X$ by \cite{CohenJones}, this map is the diagonal map $\freeloops X\to \freeloops X\times \freeloops X$.

Malkiewich \cite{malkiewich} shows that there is a natural map
\[ \Sigma^\infty_+(\Tot_{\Top}(X^{S^1_\bullet}))\to \Tot_{\Spec}(\Sigma^\infty_+(X^{S^1_\bullet}))\]
where on the right-hand side we have taken the suspension spectrum at each cosimplicial level of the cosimplicial space $X^{S^1_\bullet}$.  Note that at cosimplicial level $n$, we may identify the suspension spectrum $\Sigma^\infty_+(X^{\times n})$ with $(\Sigma^\infty_+ X)^{\sma n}$ and thus this right-hand cosimplicial spectrum is $\coTHH^\bullet(\Sigma_+^\infty X)$.  Malkiewich further shows that this map is an equivalence (after Reedy-fibrantly replacing on the right).  In fact, this map fits into a diagram
\[\xymatrix{
\Sigma^\infty_+ \Tot_{\Top}(X^{S^1_\bullet}) \ar[r]\ar[d]_\nabla & \Tot_{\Spec}(\Sigma^\infty_+(X^{S^1_\bullet}))\ar[d]^\nabla\\
\Sigma^\infty_+\Tot_{\Top}(X^{S^1_\bullet \amalg S^1_\bullet})\ar[r] & \Tot_{\Spec}(\Sigma^\infty_+(X^{S^1_\bullet \amalg S^1_\bullet})).
}
\] 
The lower horizontal map is also an equivalence, as we can see via the identification of $X^{S^1_\bullet\amalg S^1_\bullet}$ with $(X\times X)^{S^1_\bullet}$, to which Malkiewich's result applies.
Using the above identifications, we further obtain the equivalence
\begin{align*}
\Tot_{\Spec}(\Sigma^\infty_+(X^{S^1_\bullet\amalg S^1_\bullet}))&\simeq \Sigma^\infty_+\Tot_{\Top}(X^{S^1_\bullet \amalg S^1_\bullet})\\
&\simeq\Sigma^\infty_+(\Tot_{\Top}(X^{S^1_\bullet})\times \Tot_{\Top}(X^{S^1_\bullet}))\\
&\simeq\Sigma^\infty_+\Tot_{\Top}(X^{S^1_\bullet})\sma \Sigma^\infty_+\Tot_{\Top}(X^{S^1_\bullet})\\
&\simeq\Tot_{\Spec}(\Sigma^\infty_+ (X^{S^1_\bullet}))\sma \Tot_{\Spec}(\Sigma^\infty_+(X^{S^1_\bullet})).
\end{align*}
Thus the right vertical map in the diagram is a comultiplication of spectra 
\[ \triangle\colon \coTHH(\Sigma^\infty_+ X)\to \coTHH(\Sigma^\infty_+ X)\sma \coTHH(\Sigma^\infty_+ X)\]
 and in fact agrees with the canonical comultiplication we obtain on $\coTHH(\Sigma^\infty_+X)$  from the diagonal map after using the identification $\coTHH(\Sigma^\infty_+X)\simeq \Sigma^\infty_+\freeloops X$.

We additionally have a counit map $\coTHH(\Sigma^\infty_+X)\to \coTHH(\Sigma^\infty_+\ast) $ that arises from the collapse map $\pi\colon X_+\to \ast_+=S^0$; note that $\coTHH(\Sigma^\infty_+\ast)=\coTHH(S)\simeq S$ either from Malkiewich's result about free loop spaces or from the observation that $\coTHH^\bullet(S)$ is the constant cosimplicial spectrum at $S$.    The axioms for a coalgebra spectrum thus can be verified at the cosimplicial level. 

For example, we show counitality explicitly.  At the cosimplicial level, the composite
\[ X^{S^1_\bullet} \xto{\nabla} X^{S^1_\bullet \amalg S^1_\bullet} \simeq X^{S^1_\bullet}\times X^{S^1_\bullet}\xto{\pi\times \id} \ast^{S^1_\bullet}\times X^{S^1_\bullet}\simeq X^{S^1_\bullet}
\]
is the identity map. 
After passing to suspension spectra and totalizing, we obtain the left counitality condition.
\end{proof}
We now make two important observations following from the identifications above. 
\begin{rem}\label{usefulremarkaboutcothhsuspspec1}
In the case of the cosimplicial spectrum $(\Sigma^\infty_+X)^{S^1_\bullet}$, totalization commutes with the smash product 
\[ \Tot((\Sigma^\infty_+X)^{S^1_\bullet}\sma (\Sigma^\infty_+X)^{S^1_\bullet})\simeq \Tot((\Sigma^\infty_+X)^{S^1_\bullet})\sma \Tot((\Sigma^\infty_+X)^{S^1_\bullet}).\]
By the above, both sides are weakly equivalent to $\Sigma^\infty_+\freeloops X\sma \Sigma^\infty_+\freeloops X$.
\end{rem}
\begin{rem}\label{usefulremarkaboutcothhsuspspec2}
The totalization of $(\Sigma^\infty_+ X)^{S^1_\bullet}$ as a coalgebra spectrum (in the infinity categorical framework) agrees with the totalization as a spectrum.  For this, we observe that by the above lemma, the totalization in spectra is already a coalgebra spectrum, and its coalgebra structure maps come from the structure maps on the cosimplicial coalgebra spectrum $\Sigma^\infty_+ X^{S^1_\bullet}$.  Hence the totalization in spectra $\Tot(\Sigma^\infty_+ X^{S^1_\bullet})$ already satisfies the universal property of the totalization in coalgebra spectra and the natural map
\[ \Tot_{\cC}((\Sigma^\infty_+X)^{S^1_\bullet})\to \Tot_{\Spec}((\Sigma^\infty_+X)^{S^1_\bullet})\]
must be an equivalence.
\end{rem}

The result of \cref{lem:comultfreeloops} is analogous to the result for topological Hochschild homology that when $R$ is commutative, $\THH(R)$ is an algebra spectrum. Further, for commutative $R$, $\THH(R)$ is known to be a Hopf algebra over $R$ in the homotopy category \cite{ekmm, MSV97}. Below, we consider the analogous result for the topological coHochschild homology of suspension spectra.  Our proof relies on the identification of the coalgebra structure on $\coTHH(\Sigma^\infty_+ X)$ from \cref{lem:comultfreeloops} and in particular on the consequence of this structure noted in \cref{usefulremarkaboutcothhsuspspec2}.  This allows us to work in the context of cocommutative coalgebra spectra and we take advantage of additional formal properties available there.

\begin{prop}\label{cothhiscoalgoversuspspec}
For a simply connected space $X$, $\coTHH(\Sigma^\infty_+ X)$ is a $\Box_{\Sigma^\infty_+X}$-coalgebra; that is, $\coTHH(\Sigma^\infty_+ X)$ defines an object in the $\infty$-category \[\cC\big(\cC(\Spec)_{/\Sigma^\infty_+X}\big).\]
\end{prop}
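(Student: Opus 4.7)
The plan is to apply \cref{inftyboxprodstructurecoalg}, which asserts that every object of the over-$\infty$-category $\cC(\Spec)_{/\Sigma^\infty_+X}$ automatically acquires the structure of a cocommutative $\Box_{\Sigma^\infty_+X}$-coalgebra. The proof would therefore reduce to exhibiting $\coTHH(\Sigma^\infty_+X)$ as an object of $\cC(\Spec)_{/\Sigma^\infty_+X}$, that is, to producing a structure map $\coTHH(\Sigma^\infty_+X) \to \Sigma^\infty_+X$ in $\cC(\Spec)$.

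To build this structure map, I would start with the inclusion of a basepoint $\Delta^0_\bullet \hookrightarrow S^1_\bullet$ of pointed simplicial sets. Since $\coTHH^\bullet(\Sigma^\infty_+X) = (\Sigma^\infty_+X)^{S^1_\bullet}$ is defined as the cosimplicial cotensor in $\cC(\Spec)$, this inclusion induces a morphism of cosimplicial cocommutative coalgebra spectra
\[(\Sigma^\infty_+X)^{S^1_\bullet} \longrightarrow (\Sigma^\infty_+X)^{\Delta^0_\bullet},\]
whose target is the constant cosimplicial object at $\Sigma^\infty_+X$, since a Cartesian product indexed by a singleton is trivial. At cosimplicial level $n$, where the Cartesian product in $\cC(\Spec)$ is the smash product, this is the map $(\Sigma^\infty_+X)^{\wedge S^1_n}\to \Sigma^\infty_+X$ obtained by applying the counit of $\Sigma^\infty_+X$ on every non-basepoint factor. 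Totalization then yields a morphism $\coTHH(\Sigma^\infty_+X)\to \Sigma^\infty_+X$ of spectra, which geometrically is the suspension spectrum of the evaluation map $\freeloops X\to X$ at the basepoint of the circle.

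The remaining step is to promote this map to a morphism in $\cC(\Spec)$ rather than merely in $\Spec$. I would argue this follows from \cref{usefulremarkaboutcothhsuspspec2}: for suspension spectra, the totalization of $(\Sigma^\infty_+X)^{S^1_\bullet}$ in underlying spectra already coincides with its totalization in $\cC(\Spec)$, while the totalization of the constant cosimplicial object is tautologically $\Sigma^\infty_+X$ in $\cC(\Spec)$. By functoriality of totalization in both settings, the totalized map therefore lifts to $\cC(\Spec)$, producing the desired object of $\cC(\Spec)_{/\Sigma^\infty_+X}$, and hence, via \cref{inftyboxprodstructurecoalg}, the $\Box_{\Sigma^\infty_+X}$-coalgebra structure claimed.

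The main obstacle I anticipate is precisely this last lifting step, and it is also where the suspension spectrum hypothesis is indispensable. Totalization of cosimplicial coalgebra spectra does not in general commute with totalization of underlying spectra, so for an arbitrary cocommutative coalgebra spectrum $C$ there is no reason to expect $\coTHH(C)$ to inherit a $\Box_C$-coalgebra structure. The suspension spectrum case is distinguished by Malkiewich's equivalence $\coTHH(\Sigma^\infty_+X)\simeq \Sigma^\infty_+\freeloops X$ invoked in \cref{lem:comultfreeloops}, which is what ultimately allows the coalgebra structure to survive totalization and makes the entire argument go through.
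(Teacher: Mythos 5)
Your proposal is correct and follows essentially the same route as the paper: the basepoint inclusion $\ast\to S^1_\bullet$ induces the augmentation to the constant cosimplicial coalgebra spectrum, the identification of the totalization in spectra with the totalization in $\cC(\Spec)$ (from \cref{lem:comultfreeloops} and \cref{usefulremarkaboutcothhsuspspec2}) promotes the totalized map to a morphism of cocommutative coalgebra spectra, and \cref{inftyboxprodstructurecoalg} then supplies the $\Box_{\Sigma^\infty_+X}$-coalgebra structure. The only cosmetic difference is that the paper totalizes directly in $\cC(\Spec)$ and then observes the result agrees with $\coTHH(\Sigma^\infty_+X)$, whereas you totalize in spectra and then lift; these are the same argument.
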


\begin{proof}
Observe that the inclusion of the basepoint $\ast\to S^1_\bullet$ induces a map of cosimplicial coalgebra spectra
\[\coTHH^\bullet(\Sigma^\infty_+ X)\to \Sigma^\infty_+ X\]
where the target is the constant cosimplicial coalgebra spectrum.  Totalizing in  $\cC(\Spec)$ we obtain a map $\coTHH(\Sigma^\infty_+X)\to \Sigma^\infty_+ X$ and by \cref{lem:comultfreeloops}, this is a map of cocommutative coalgebra spectra.  Hence $\coTHH(\Sigma^\infty_+ X)$ is an object of $\cC(\Spec)_{/\Sigma^\infty_+X}$.  \cref{inftyboxprodstructurecoalg} shows that $\coTHH(\Sigma^\infty_+X)$ is then a cocommutative $\Box_{\Sigma^\infty_+X}$-coalgebra.
 \end{proof}

It is useful to be more explicit in identifying the comultiplication structure on $\coTHH(\Sigma^\infty_+ X)$. Unwinding the proof of \cref{inftyboxprodstructurecoalg}, the comultiplication over $\Sigma^\infty_+X$  on $\coTHH(\Sigma^\infty_+ X)$ of \cref{cothhiscoalgoversuspspec} is a  map of cocommutative coalgebra spectra
\[ \coTHH(\Sigma^\infty_+ X)\to \coTHH(\Sigma^\infty_+X)\Box_{\Sigma^\infty_+X} \coTHH(\Sigma^\infty_+X)\]
where the target is the monoidal product in $\cC(\Spec)_{/\Sigma^\infty_+ X}$, defined by pullback over $\Sigma^\infty_+X$.  Since $S^1_\bullet \vee S^1_\bullet$ is the pushout
\[\xymatrix{ \ast \ar[r]\ar[d]& S^1_\bullet\ar[d]\\
S^1_\bullet \ar[r]& S^1_\bullet \vee S^1_\bullet,}
\]
$(\Sigma^\infty_+ X)^{S^1_\bullet \vee S^1_\bullet}$ is the pullback in cosimplicial coalgebra spectra
\[ \xymatrix{ (\Sigma^\infty_+ X)^{S^1_\bullet\vee S^1_\bullet}\ar[r]\ar[d] & (\Sigma^\infty_+ X)^{S^1_\bullet}\ar[d]\\
(\Sigma^\infty_+ X)^{S^1_\bullet} \ar[r]&(\Sigma^\infty_+ X). }
\]
Totalization preserves pullbacks in $\infty$-categories, so we have a pullback diagram in cocommutative coalgebra spectra
\[ \xymatrix{ \Tot_\cC((\Sigma^\infty_+ X)^{S^1_\bullet\vee S^1_\bullet})\ar[r]\ar[d] & \Tot_\cC((\Sigma^\infty_+ X)^{S^1_\bullet})\ar[d]\\
\Tot_\cC((\Sigma^\infty_+ X)^{S^1_\bullet}) \ar[r]&\Tot_\cC(\Sigma^\infty_+ X). }
\]
The totalization in spectra of $(\Sigma^\infty_+ X)^{S^1_\bullet}$ and $(\Sigma^\infty_+ X)^\ast$ are already cocommutative coalgebras, so we identify this pullback as
\begin{equation}\label{htpypbdefiningcoTHHboxcoTHH}
\begin{gathered} \xymatrix{ \Tot_\cC((\Sigma^\infty_+ X)^{S^1_\bullet\vee S^1_\bullet})\ar[r]\ar[d] & \coTHH(\Sigma^\infty_+ X)\ar[d]\\
\coTHH(\Sigma^\infty_+ X) \ar[r]&\Sigma^\infty_+ X,}
\end{gathered}
\end{equation}
and thus we have made the identification
\[ \coTHH(\Sigma^\infty_+ X)\Box_{\Sigma^\infty_+X}\coTHH(\Sigma^\infty_+ X) =\Tot_{\cC{\Spec}}(\Sigma^\infty_+X)^{S^1_\bullet \vee S^1_\bullet}.\]
 The comultiplication map $\coTHH(\Sigma^\infty_+X)\to \coTHH(\Sigma^\infty_+ X)\Box_{\Sigma^\infty_+ X}\coTHH(\Sigma^\infty_+ X)$ is therefore the totalization in $\cC(\Spec)$ of the map of cosimplicial coalgebra spectra
\[ (\Sigma^\infty_+ X)^{S^1_\bullet} \to (\Sigma^\infty_+ X)^{S^1_\bullet \vee S^1_\bullet}\]
obtained by cotensoring with the fold map $S^1_\bullet \vee S^1_\bullet\to S^1_\bullet$ of simplicial sets.

The cocommutative coalgebra structure on $\coTHH(\Sigma^\infty_+X)$ is thus fairly formal in the sense that it does not use any special properties of $S^1_\bullet$. Showing that $\coTHH(\Sigma^\infty_+ X)$ admits multiplication-like structure does require specific properties of the circle.  In particular, as in \cref{sec:coHH}, this requires using alternate simplicial models of the circle.  We first show that $\coTHH(\Sigma^\infty_+ X)$ can also be constructed using a double circle model of $S^1$.

\begin{lemma}\label{doublecirclearefine} Let $\mathcal{C}$ be a Cartesian monoidal $\infty$-category. Recall the double circle  $dS^1_\bullet =(\Delta^1\amalg \Delta^1)\amalg_{(\partial\Delta^1\amalg\partial\Delta^1)}\partial\Delta^1$ defined in \cref{sec:coHH}, and let $\pi\colon dS^1_\bullet \to S^1_\bullet$ be the map of simplicial sets given by collapsing one of the 1-simplices.  For an object $C\in \mathcal{C}$, totalization in $\mathcal{C}$ induces an equivalence
\[ \lim_\Delta C^{S^1_\bullet} \to \lim_\Delta C^{dS^1_\bullet}.\]
\end{lemma}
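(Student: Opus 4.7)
The plan is to express both $S^1_\bullet$ and $dS^1_\bullet$ as pushouts of simplicial sets and then recognize the induced map on cotensors as arising from a map of pullback diagrams whose vertices are each equivalences.

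Concretely, I would first write
\[
dS^1_\bullet \cong \Delta^1_\bullet \amalg_{\partial\Delta^1_\bullet} \Delta^1_\bullet
\quad\text{and}\quad
S^1_\bullet \cong \Delta^1_\bullet \amalg_{\partial\Delta^1_\bullet} \Delta^0_\bullet,
\]
the first by the very definition of $dS^1_\bullet$, and the second as $\Delta^1_\bullet/\partial\Delta^1_\bullet$. The collapse map $\pi\colon dS^1_\bullet\to S^1_\bullet$ then arises as a map of pushout spans that is the identity on $\Delta^1_\bullet$ and on $\partial\Delta^1_\bullet$, and is the unique collapse $\Delta^1_\bullet\to \Delta^0_\bullet$ on the final leg.

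Next, I would transfer this to $\mathcal{C}$. At each cosimplicial level $[n]$, the objects $\Delta^1_n$, $\partial\Delta^1_n$, $\Delta^0_n$ are finite sets, and in a Cartesian monoidal $\infty$-category, cotensor with a finite set is a finite product. Hence cotensor sends pushouts of sets to pullbacks in $\mathcal{C}$ levelwise. Assembling these identifications over all $[n]$ and then totalizing, I obtain
\[
\lim_\Delta C^{dS^1_\bullet} \simeq C^{\Delta^1} \times_{C^{\partial\Delta^1}} C^{\Delta^1},
\qquad
\lim_\Delta C^{S^1_\bullet} \simeq C^{\Delta^1} \times_{C^{\partial\Delta^1}} C^{\Delta^0},
\]
using that totalization, as a limit, commutes with pullbacks. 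The map of cosimplicial objects induced by $\pi$ fits into a map of cospans in $\mathcal{C}$ whose first two legs are identities and whose third leg is the map $C^{\Delta^0}\to C^{\Delta^1}$ induced by the collapse $\Delta^1_\bullet \to \Delta^0_\bullet$.

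Finally, I would invoke \cref{cotensorwithDeltaoneinftycate}, which identifies $C\simeq \lim_\Delta C^{\Delta^1_\bullet}$ via the coaugmentation coming from $\Delta^1_\bullet \to \Delta^0_\bullet$; this shows that the third leg of the cospan map is an equivalence. A map of cospans in $\mathcal{C}$ that is an equivalence at every vertex induces an equivalence on pullbacks, yielding the desired equivalence $\lim_\Delta C^{S^1_\bullet} \xrightarrow{\sim} \lim_\Delta C^{dS^1_\bullet}$. The main technical point is in the middle step: verifying that cotensor genuinely interchanges pushouts of simplicial sets with pullbacks of cosimplicial objects in $\mathcal{C}$, and that totalization preserves the resulting pullbacks. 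This reduces to the facts that cotensor with finite sets is a finite product and that limits commute with limits, but some care is required to carry it out $\infty$-categorically.
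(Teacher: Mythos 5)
Your proposal is correct and follows essentially the same route as the paper: both present $dS^1_\bullet$ and $S^1_\bullet$ as pushouts of the spans $\Delta^1_\bullet \leftarrow \partial\Delta^1_\bullet \to \Delta^1_\bullet$ and $\Delta^1_\bullet \leftarrow \partial\Delta^1_\bullet \to \ast$, convert these to levelwise pullbacks of cosimplicial cotensors, commute the totalization past the pullback, and conclude via \cref{cotensorwithDeltaoneinftycate} that the only non-identity leg of the cospan map is an equivalence. The technical point you flag (cotensor turning levelwise pushouts of finite sets into pullbacks) is handled just as tersely in the paper's own argument.
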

\begin{proof}
The collapse map $dS^1_\bullet\to S^1_\bullet$ is the induced map on pushouts of the diagrams
\[\xymatrix{
\Delta^1_\bullet \ar@{=}[d] & \partial \Delta^1_\bullet \ar@{=}[d]\ar[r]\ar[l]& \Delta^1_\bullet \ar[d]\\
\Delta^1_\bullet &\partial \Delta^1_\bullet \ar[l] \ar[r]&\ast
}
\]
Hence the map $C^{S^1_\bullet}\to C^{dS^1_\bullet}$ of cosimplicial objects of $\mathcal{C}$ is the induced map on the pullbacks below
\[\xymatrix{
C^{\Delta^1_\bullet} \ar[r] & C^{\partial \Delta^1_\bullet} & C^{\Delta^1_\bullet} \ar[l]\\
C^{\Delta^1_\bullet}\ar@{=}[u] \ar[r]&C^{\partial \Delta^1_\bullet}   \ar@{=}[u] & C^\ast\ar[u]\ar[l]
}
\]
We are interested in the map $\lim_\Delta C^{S^1_\bullet} \to \lim_\Delta C^{dS^1_\bullet}.$ Since limits commute, as follows from \cite[Lemma 2.4.1]{RVelements}, it is equivalent to calculate the pullback of the diagram of totalizations in $\mathcal{C}$:
\[\xymatrix{
\lim_\Delta C^{\Delta^1_\bullet} \ar[r] & \lim_\Delta C^{\partial \Delta^1_\bullet} & \lim_\Delta C^{\Delta^1_\bullet} \ar[l]\\
\lim_\Delta C^{\Delta^1_\bullet}\ar@{=}[u] \ar[r]&\lim_\Delta C^{\partial \Delta^1_\bullet}   \ar@{=}[u] & \lim_\Delta C^\ast\ar[u]^{\sim}\ar[l]
}
\]
By \cref{cotensorwithDeltaoneinftycate}, the right vertical map is an equivalence, and thus the pullbacks of these equivalent diagrams are equivalent.
\end{proof}

\begin{prop}\label{prop:suspspectrum_boxcoalgspectrallevel}
Let $X$ be a simply connected space.  Then $\coTHH(\Sigma^\infty_+ X)$ is a Hopf monoid in the homotopy category of coalgebra spectra over $\Sigma^\infty_+X$: it is a bimonoid with an antipode map.  
\end{prop}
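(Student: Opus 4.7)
The plan is to mirror the algebraic story of \cref{sec:coHH} for $\coHH$, constructing multiplication, unit, and antipode on $\coTHH(\Sigma^\infty_+ X)$ via cotensors with appropriate simplicial self-maps of the circle, then verifying all bimonoid and antipode axioms in the homotopy category from commutative diagrams of simplicial sets. The comultiplication and counit over $\Sigma^\infty_+X$ are already in hand from \cref{cothhiscoalgoversuspspec} and the explicit identification of $\coTHH(\Sigma^\infty_+X)\Box_{\Sigma^\infty_+X}\coTHH(\Sigma^\infty_+X)$ via the pullback square \eqref{htpypbdefiningcoTHHboxcoTHH}.

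To construct the multiplication, I would use the simplicial pinch map $\psi\colon dS^1_\bullet \to S^1_\bullet \vee S^1_\bullet$ and the collapse map $\pi\colon dS^1_\bullet \to S^1_\bullet$ from \cref{doublecirclearefine}. Cotensoring with $\psi$ produces a map of cosimplicial cocommutative coalgebra spectra
\[(\Sigma^\infty_+ X)^{S^1_\bullet \vee S^1_\bullet} \to (\Sigma^\infty_+ X)^{dS^1_\bullet},\]
and totalizing in $\cC(\Spec)$ gives $\coTHH(\Sigma^\infty_+ X)\Box_{\Sigma^\infty_+X}\coTHH(\Sigma^\infty_+ X) \to \Tot_{\cC}((\Sigma^\infty_+X)^{dS^1_\bullet})$, where the source is identified using \eqref{htpypbdefiningcoTHHboxcoTHH}. \cref{doublecirclearefine} applied in $\cC(\Spec)$ provides an equivalence $\Tot_{\cC}((\Sigma^\infty_+X)^{dS^1_\bullet}) \simeq \coTHH(\Sigma^\infty_+ X)$ (with \cref{usefulremarkaboutcothhsuspspec2} ensuring this agrees with the totalization in spectra), yielding the desired multiplication $\mu$. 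The unit is obtained by cotensoring with the retraction $S^1_\bullet \to \ast$, producing $\eta\colon \Sigma^\infty_+ X \to \coTHH(\Sigma^\infty_+ X)$. For the antipode, I would cotensor with the simplicial orientation-reversing map $\chi\colon S^1_\bullet \to S^1_\bullet$ (again realized via $dS^1_\bullet$, by swapping the two $\Delta^1$ factors).

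The remaining content is verification, which proceeds by importing commutative diagrams of simplicial sets to the cosimplicial spectrum level and then totalizing. Associativity and unitality of $\mu$ follow from the same diagrams used in the proof of \cref{coHHbialg2}---here a triple-circle model of $S^1$ is needed to compose two pinches, just as in the $\coHH$ setting. Counitality and coassociativity of $\triangle$ over $\Sigma^\infty_+ X$ come from the fold and basepoint inclusion diagrams verified in \cref{lem:comultfreeloops,cothhiscoalgoversuspspec}. The crucial bimonoid compatibility diagram relating $\mu$ and $\triangle$ is the cotensor of the commutative square of simplicial sets appearing at the end of the proof of \cref{coHHbialg2}, comparing pinch-then-fold with fold-then-pinch through a twist. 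The antipode axiom reduces to the standard simplicial identity expressing that on $S^1_\bullet$ the composite of fold, $(\chi,\id)$, and the comultiplication agrees with the composite through the basepoint, which holds after one resolves $S^1$ via $dS^1_\bullet$.

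The main obstacle will be keeping the cosimplicial and totalization steps coherent, especially since Cartesian-monoidal totalization in $\cC(\Spec)$ and totalization in $\Spec$ only agree for the circle-cotensors of $\Sigma^\infty_+X$ by virtue of \cref{usefulremarkaboutcothhsuspspec1,usefulremarkaboutcothhsuspspec2}; applying these identifications at each stage (including the relevant smashes for bialgebra compatibility) is what allows the simplicial-diagram verifications to pass to the homotopy category of $\cC(\Spec)_{/\Sigma^\infty_+X}$. Working in the homotopy category, rather than at the full $\infty$-categorical level, is also what frees us from coherence data for the higher simplicial homotopies relating the pinches associated to the double and triple circle models.
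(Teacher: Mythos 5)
Your proposal follows essentially the same route as the paper: comultiplication from the fold map, multiplication from the pinch $\psi\colon dS^1_\bullet\to S^1_\bullet\vee S^1_\bullet$ inverted through the collapse equivalence of \cref{doublecirclearefine}, antipode from the flip on the double circle, and all axioms checked by cotensoring (homotopy) commutative diagrams of simplicial sets, using \cref{usefulremarkaboutcothhsuspspec2} to reconcile totalizations. The only under-specified point is the antipode compatibility, which in the paper requires the triple circle $tS^1_\bullet=\partial\Delta^2$ with the maps $\psi_1,\psi_2$ and a diagram that is only \emph{homotopy} commutative (following \cite{AR}), rather than a resolution by $dS^1_\bullet$ alone---but you anticipate the need for the triple-circle model elsewhere, so this is a matter of precision, not a gap.
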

Intuitively, this structure should be thought of as that of a $\Box_{\Sigma^\infty_+ X}$-Hopf algebra structure on $\coTHH(\Sigma^\infty_+ X)$ up to homotopy.

\begin{proof}
We must show that $\coTHH(\Sigma^\infty_+ X)$ has the structure of a comonoid and a monoid and define the antipode map relating these structures. The comonoid structure is that of \cref{cothhiscoalgoversuspspec}, with comultiplication
\[ \coTHH(\Sigma^\infty_+X)\to \coTHH(\Sigma^\infty_+X)\Box_{\Sigma^\infty_+ X} \coTHH(\Sigma^\infty_+ X)\]
identified as arising from the fold map $S^1_\bullet \vee S^1_\bullet \to S^1_\bullet$ of the simplicial circle and counit identified as arising from the inclusion of the basepoint $\ast\to S^1_\bullet$.  

We next define the monoid structure on $\coTHH(\Sigma^\infty_+X)$, over $\Sigma^\infty_+ X$.  Recall that there is a based simplicial pinch map
\[
\psi\colon dS^1_\bullet \to S^1_\bullet \vee S^1_\bullet, 
\]
where $dS^1_\bullet$ is the double circle as above.  
This induces maps of cosimplicial coalgebra spectra
\[\xymatrix{
(\Sigma^\infty_+ X)^{S^1_\bullet \vee S^1_\bullet} \ar[r]^{\psi}\ar[dr]& (\Sigma^\infty_+ X)^{dS^1_\bullet}\ar[d]\\
& (\Sigma^\infty_+X)^*
}
\]
After totalization in $\cC(\Spec)$ this yields a map over $\Sigma^\infty_+X$
\[
\psi\colon \coTHH(\Sigma^\infty_+ X)\Box_{\Sigma^\infty_+ X} \coTHH(\Sigma^\infty_+ X) \to \Tot_{\cC}((\Sigma^\infty_+ X)^{dS^1_\bullet}).
\]
The collapse map $\pi\colon dS^1_\bullet \to S^1_\bullet$ that takes the second $\Delta^1_\bullet$ to the basepoint induces a map
\[
\pi: \Tot_{\cC}(\Sigma^\infty_+ X)^{S^1_\bullet} \to \Tot_{\cC}(\Sigma^\infty_+ X)^{dS^1_\bullet}
\]
which is an equivalence in $\cC(\Spec)$ by \cref{doublecirclearefine} and thus in spectra.  
Thus, by \cref{usefulremarkaboutcothhsuspspec2}, the target of $\psi$ is weakly equivalent to $\coTHH(\Sigma^\infty_+ X)$, so we may view $\psi$ as the multiplication of a monoid structure.  The unit map $\Sigma^\infty_+ X\to \coTHH(\Sigma^\infty_+ X)$ is obtained similarly from the simplicial map $S^1_\bullet \to \ast$.  The monoid axioms arise from suitably commuting diagrams or homotopy-commuting diagrams of simplicial sets, as in the proof of \cref{coHHbialg2}. 

The antipode map $\chi\colon\coTHH(\Sigma^\infty_+X)\to \coTHH(\Sigma^\infty_+ X)$ is  defined using the double circle $dS^1_\bullet$.  The double circle enjoys  a simplicial flip map $\chi'\colon dS^1_\bullet \to dS^1_\bullet$.  Cotensoring with $\chi'$ and using the equivalence $\pi$ produces the zigzag 
\[\xymatrix{
& *=<7em,3ex>{ \Tot_{\cC}((\Sigma_+^\infty X)^{dS^1_\bullet})}\ar[]!<5.1em,0ex>;[rr]!<-5.1em,0ex>^{\chi'}& &*=<7em,3ex>{\Tot_{\cC}((\Sigma_+^\infty X)^{dS^1_\bullet})} \\
*=<0em,3ex>{\Tot_{\cC}( (\Sigma_+^\infty X)^{S^1_\bullet})}\ar[]!<0em,2ex>;[ur]!<-2em,-2ex>^-{\pi}&&&&*=<0em,3em>{\Tot_{\cC}(( \Sigma_+^\infty X)^{S^1_\bullet})}\ar[]!<-0em,2ex>;[ul]!<2em,-2ex>_-{\pi}
}
\]
and hence an antipode map $\chi\colon \coTHH(\Sigma^\infty_+X)\to \coTHH(\Sigma^\infty_+ X)$ in the homotopy category. 

To check the compatibility diagram involving the antipode, we also need a triple model for the circle. 
As in \cite{AR}, let $tS^1_{\bullet}$ denote $\partial\Delta^2$, with three non-degenerate 1-simplices as shown:\\
\centerline{\begin{tikzpicture}[>=stealth]
    \draw[ 
        decoration={markings, mark=at position 0.005 with {\arrow[thick]{<}},mark=at position 0.16 with {\fill circle (1.5pt) node[above]{\footnotesize $v_1$};}, mark=at position 0.325 with {\arrow[thick]{<}},mark=at position 0.5 with {\fill circle (1.5pt) node[left] {\footnotesize$v_0$};}, mark=at position 0.675 with {\arrow[thick]{>}}, mark=at position 0.85 with {\fill circle (1.5pt) node[below right] {\footnotesize $v_2$};},},
        postaction={decorate}
        ]
        (3,0) circle (0.6);
\end{tikzpicture}}
Let  
\[
\psi_1\colon tS^1_{\bullet} \to S^1_\bullet \vee dS^1_\bullet 
\] 
be the simplicial map that identifies the $v_0$ and $v_1$ in $\partial\Delta^2$ and takes the face opposite $v_0$ to the first $\Delta^1$ in $dS^1$. Let 
\[
\psi_2\colon tS^1_{\bullet} \to  dS^1_\bullet \vee S^1_\bullet
\] 
be the simplicial map that identifies the vertices $v_1$ and $v_2$ in  $\partial\Delta^2$ and takes the face opposite $v_2$ to the first $\Delta^1$ in $dS^1.$ Then the commutative diagram in \cref{defn:boxhopf} comes from cotensoring with the following homotopy commutative diagram of simplicial sets:
\[\xymatrix{&  S^1_\bullet \vee dS^1_\bullet \ar[dl]_{\nabla \circ (\id\vee \pi)}&& S^1_\bullet \vee dS^1_\bullet \ar[ll]_{\id\vee \chi'}\\
S^1_\bullet  && \ast_\bullet \ar[ll]_{\epsilon}&& tS^1_\bullet \ar[ll]_{\eta}\ar[ul]_{\psi_1}\ar[dl]^{\psi_2}\\
&dS^1_\bullet \vee S^1_\bullet \ar[ul]^{\nabla\circ (\pi\vee\id)}&& dS^1_\bullet \vee S^1_\bullet \ar[ll]_{\chi'\vee\id}
}
\]
This diagram appears in the proof of \cite[Theorem 3.9]{AR}, where the authors discuss the homotopy commutativity.  Note that the names of the maps $\epsilon$ and $\eta$ are reversed from their choice of names because these maps play opposite roles in the coalgebra and algebra cases.  An argument analogous to that of \cref{doublecirclearefine} shows that cotensoring $\Sigma^\infty_+X$ with $tS^1_\bullet$ produces a coalgebra spectrum that is again equivalent to $\coTHH(\Sigma^\infty_+X)$.
\end{proof}

Before looking at the implications of this result for the homology of free loop spaces, we prove the following lemma.  Recall that a map of coalgebras $A\to C$ allows us to view $A$ as a right or left $C$-comodule, similar to \cref{boxprodforcoalg}.

\begin{lemma}\label{prop-coflat}
Let $k$ be a field and let $A$, $B$ and $C$ be $Hk$-coalgebras. Let $X$ be the homotopy pullback in $Hk$-module spectra of the diagram 
\[
\xymatrix{ & B \ar[d] \\
A \ar[r] & C
}
\]
 If $\pi_*(A)$ or $\pi_*(B)$ is coflat as a comodule over $\pi_*(C)$,  then
\[\pi_*(X)\cong \pi_*(A)\square_{\pi_*(C)} \pi_*(B)\] 
\end{lemma}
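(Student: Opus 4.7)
The plan is to compute $\pi_*(X)$ using a $\cotor$-type spectral sequence built from a cobar resolution of $X$, and then to collapse this spectral sequence using the coflatness hypothesis.

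First, viewing $A$ as a right $C$-comodule and $B$ as a left $C$-comodule via the given coalgebra maps, I would resolve $X$ as the totalization in $Hk$-module spectra of the two-sided cobar cosimplicial spectrum with $n$th level $A\sma_{Hk} C^{\sma_{Hk} n}\sma_{Hk} B$ and with cofaces induced by the two coactions together with the comultiplication of $C$. This identification is the $\infty$-categorical analog of the classical fact that the derived cotensor product can be computed via the two-sided cobar construction, and is essentially forced on us by the $C$-comodule structures coming from the coalgebra maps into $C$.

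The Bousfield--Kan spectral sequence associated to this totalization then takes the form
\[
E_2^{s,t}\Rightarrow \pi_{t-s}(X).
\]
Because $k$ is a field, the K\"unneth isomorphism $\pi_*(Y\sma_{Hk} Z)\cong \pi_*(Y)\otimes_k\pi_*(Z)$ identifies the cosimplicial $k$-module $\pi_*\bigl(A\sma_{Hk} C^{\sma_{Hk}\bullet}\sma_{Hk} B\bigr)$ with the classical algebraic cobar cosimplicial $k$-module computing $\cotor_{\pi_*(C)}(\pi_*(A),\pi_*(B))$, so $E_2^{s,t}=\cotor^{s,t}_{\pi_*(C)}(\pi_*(A),\pi_*(B))$. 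Under the coflatness hypothesis on $\pi_*(A)$ (or $\pi_*(B)$) as a comodule over $\pi_*(C)$, the higher $\cotor$ terms vanish for $s>0$, so the spectral sequence collapses onto the $s=0$ column, which is exactly $\pi_*(A)\Box_{\pi_*(C)}\pi_*(B)$.

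The main obstacle is the first step: rigorously identifying $X$ with the cobar totalization in the $\infty$-category of $Hk$-module spectra, together with checking convergence of the resulting Bousfield--Kan spectral sequence. An alternative route that sidesteps this identification is to transport the entire problem to chain complexes of $k$-vector spaces via the Dold--Kan correspondence (together with P\'eroux's rigidification \cite{perouxrigid}), where $X$ corresponds to the chain-level cotensor $A^*\Box_{C^*}B^*$, and then to invoke the dual Eilenberg--Moore/K\"unneth spectral sequence from the proof of \cref{thrm:boxkunneth}. The coflatness of $H_*(A^*)\cong \pi_*(A)$ over $H_*(C^*)\cong \pi_*(C)$ then forces the higher $\cotor$ terms to vanish, collapsing the spectral sequence to the algebraic cotensor and delivering the desired identification.
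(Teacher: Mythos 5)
Your proposal follows essentially the same route as the paper: the paper also resolves via the two-sided cobar cosimplicial spectrum $\Omega^\bullet_{Hk}(A,C,B)$, runs the Bousfield--Kan spectral sequence, identifies the $E_2$-page with $\cotor_{\pi_*(C)}(\pi_*(A),\pi_*(B))$ by recognizing $\Omega^\bullet_k(\pi_*(C),\pi_*(C),\pi_*(B))$ as an injective resolution, and collapses it onto the $s=0$ column using coflatness, citing complete convergence to $\pi_*\Tot\Omega^\bullet_{Hk}(A,C,B)\cong\pi_*(X)$. The step you flag as the main obstacle---identifying the homotopy pullback $X$ with the cobar totalization---is exactly the point the paper also treats tersely, so your account is faithful to the argument as given.
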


\begin{proof}
Consider the cosimplicial cobar spectrum $\Omega^{\bullet}_{Hk}(A, C, B)$ with $n$th cosimplicial level 
\[A \sm_{Hk}  C^{\sm_{Hk} (n)} \sm_{Hk} B,\]   
as discussed, for example, in \cite[\S 3.3]{hs.coTHH}.  Applying homotopy to this cobar spectrum produces a graded cobar complex; that is, 
\[\pi_*(\Omega^{\bullet}_{Hk}(A,  C,  B)) \iso \Omega^{\bullet}_{k}(\pi_*(A), \pi_*(C), \pi_*(B))\] with $n$th cosimplicial level
\[\pi_*(A)\otimes_k \pi_*(C)^{\otimes_k n} \otimes_k \pi_*(B).\]

Consider the Bousfield--Kan spectral sequence associated to the cosimplicial spectrum $\Omega^{\bullet}_{Hk}( A, C,  B).$   To calculate the $E_2$-page, note that if we replace $A$ by $C$, then $\Omega^{\bullet}_{k}(\pi_*(C), \pi_*(C), \pi_*(B))$ produces an injective resolution of $\pi_*(B)$  and applying the functor  
\[\pi_*(A) \square_{\pi_*(C)} -\] recovers $\Omega^{\bullet}_{k}(\pi_*(A), \pi_*(C), \pi_*(B)).$   Hence the $E_2$ page is given by 
\[\cotor_{\pi_*(C)}(\pi_*(A),  \pi_*(B)).\]
 If $\pi_*(A)$ or $\pi_*(B)$ is coflat  over  $\pi_*(C)$, then the spectral sequence collapses with \[\pi_*(A)\square_{\pi_*(C)} \pi_*(B)\]  in the first column and zero everywhere else.  This is the right-hand side above, so the proposition follows by showing that the spectral sequence converges to the left hand side. Since the spectral sequence collapses under the coflatness conditions, it converges completely to $\pi_* \Tot \Omega^{\bullet}_{Hk}(A, C, B) \cong \pi_*(X)$ by \cite[IX.5.4]{BK}. 
\end{proof}

\cref{prop:suspspectrum_boxcoalgspectrallevel} then yields the following algebraic structure on the homology of free loop spaces. 
\begin{cor}
For $X$ a simply connected space and $k$ a field, if $H_*(\freeloops X;k)$ is coflat as a comodule over $H_*(X;k)$, then $H_*(\freeloops X;k)$ is a $\Box_{H_*(X;k)}$-Hopf algebra. 
\end{cor}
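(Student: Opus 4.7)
The plan is to transport the Hopf monoid structure on $\coTHH(\Sigma^\infty_+X)$ established in \cref{prop:suspspectrum_boxcoalgspectrallevel} through the functor $\pi_*(Hk \sma -)$, and to identify homotopy of the iterated $\Box$-products with algebraic cotensor products via \cref{prop-coflat}.

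First, I would smash the entire Hopf monoid structure from \cref{prop:suspspectrum_boxcoalgspectrallevel} with $Hk$. Since $Hk$ is a commutative ring spectrum and smashing with $Hk$ is a symmetric monoidal functor from spectra to $Hk$-modules, the cocommutative coalgebra spectrum $\coTHH(\Sigma^\infty_+X)$ becomes a cocommutative $Hk$-coalgebra $Hk \sma \coTHH(\Sigma^\infty_+X)$ over $Hk \sma \Sigma^\infty_+ X$. The comultiplication, counit, multiplication, unit, and antipode maps of \cref{prop:suspspectrum_boxcoalgspectrallevel}, all of which are morphisms (or zigzags of equivalences and morphisms) in the homotopy category of cocommutative coalgebra spectra over $\Sigma^\infty_+ X$, carry over to the analogous structure in the homotopy category of $Hk$-coalgebra spectra over $Hk \sma \Sigma^\infty_+X$. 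Using the identification $\coTHH(\Sigma^\infty_+X) \simeq \Sigma^\infty_+\freeloops X$, applying $\pi_*$ then produces $H_*(\freeloops X; k)$ together with candidate multiplication, comultiplication, unit, counit, and antipode maps, all of which are maps of $H_*(X;k)$-comodules.

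The crucial step is identifying the homotopy of the source of multiplication and target of comultiplication. In the homotopy category,
\[
\coTHH(\Sigma^\infty_+X) \Box_{\Sigma^\infty_+X} \coTHH(\Sigma^\infty_+X)
\]
is defined as the homotopy pullback in coalgebra spectra over $\Sigma^\infty_+X$, as in diagram \eqref{htpypbdefiningcoTHHboxcoTHH}. After smashing with $Hk$, this becomes a homotopy pullback of $Hk$-module spectra of the form considered in \cref{prop-coflat}, with $A = B = Hk \sma \coTHH(\Sigma^\infty_+X)$ and $C = Hk \sma \Sigma^\infty_+X$. The coflatness hypothesis on $H_*(\freeloops X; k)$ over $H_*(X; k)$ is exactly what is needed to invoke \cref{prop-coflat}, which then yields the identification
\[
\pi_*\bigl( Hk \sma \bigl(\coTHH(\Sigma^\infty_+X) \Box_{\Sigma^\infty_+X} \coTHH(\Sigma^\infty_+X)\bigr)\bigr) \cong H_*(\freeloops X; k) \Box_{H_*(X;k)} H_*(\freeloops X; k).
\]
Under this identification, the structure maps from the previous step become the desired $\Box_{H_*(X;k)}$-algebra, coalgebra, and antipode maps on $H_*(\freeloops X; k)$.

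Finally, I would verify that the axioms of \cref{def:boxbialg,defn:boxhopf} hold. Each such axiom is encoded by a diagram that commutes in the homotopy category of coalgebra spectra over $\Sigma^\infty_+X$, by \cref{prop:suspspectrum_boxcoalgspectrallevel}. Applying $\pi_*(Hk \sma -)$ and using the identification above for each occurrence of a $\Box$-product, these diagrams yield the corresponding commutative diagrams of $H_*(X;k)$-comodules, confirming that $H_*(\freeloops X; k)$ is a $\Box_{H_*(X;k)}$-Hopf algebra. The main obstacle is ensuring that the coflatness hypothesis suffices not only for the binary $\Box$-products appearing directly in the structure maps but also for the triple products appearing in associativity and the antipode axiom; this reduces to iterated application of \cref{prop-coflat}, which goes through because coflatness of $H_*(\freeloops X; k)$ over $H_*(X;k)$ is preserved under $\Box_{H_*(X;k)}$ with itself.
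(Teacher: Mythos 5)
Your overall strategy is the same as the paper's: apply $\pi_*(Hk\sma -)$ to the Hopf monoid structure of \cref{prop:suspspectrum_boxcoalgspectrallevel} and use \cref{prop-coflat} together with the coflatness hypothesis to identify the homotopy of the spectrum-level $\Box$-products with the algebraic cotensor products. However, there is a gap at the one step that carries all the content. You assert that after smashing Diagram (\ref{htpypbdefiningcoTHHboxcoTHH}) with $Hk$ one obtains a homotopy pullback of $Hk$-module spectra ``of the form considered in \cref{prop-coflat},'' but you give no justification, and the assertion is not automatic: $Hk\sma-$ preserves colimits, not limits, and the failure of the smash product to commute with totalizations and homotopy limits is precisely the recurring obstruction throughout this paper (see the discussion following \cref{defnspectralcoTHH} and at the start of \cref{sec:freeloop}). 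So this is exactly the point that cannot be waved through. Without it, the identification
\[
\pi_*\bigl(Hk\sma\bigl(\coTHH(\Sigma^\infty_+X)\Box_{\Sigma^\infty_+X}\coTHH(\Sigma^\infty_+X)\bigr)\bigr)\cong H_*(\freeloops X;k)\Box_{H_*(X;k)}H_*(\freeloops X;k)
\]
is unsupported.

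The missing idea is stability. Because the relevant $\infty$-categories are stable, the homotopy pullback square (\ref{htpypbdefiningcoTHHboxcoTHH}) is also a homotopy pushout square; smashing with $Hk$ preserves homotopy pushouts; and by stability of $Hk$-module spectra the resulting square is again a homotopy pullback, which is what places you in the setting of \cref{prop-coflat}. This two-step pullback-to-pushout-to-pullback argument is the entire substance of the paper's proof beyond formal transport of structure. Your closing remark about triple products and iterated coflatness is a sensible observation (and goes slightly beyond what the paper spells out), but it does not repair the central gap.
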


\begin{proof} Recall that for a simply connected space $X$, $\coTHH(\Sigma_+^{\infty}X) \simeq \Sigma_+^{\infty}\freeloops X$. The corollary follows from \cref{prop:suspspectrum_boxcoalgspectrallevel} by applying homology once we identify 
\[H_*\big(\coTHH(\Sigma^\infty_+X)\Box_{\Sigma^\infty_+ X}\coTHH(\Sigma^\infty_+X);k\big) \]
with 
\[
H_*(\coTHH(\Sigma^\infty_*X);k)\Box_{H_*(X;k)}H_*(\coTHH(\Sigma^\infty_+X);k).\]
This identification follows from the stability of the $\infty$-category of coalgebra spectra.  Stability implies that the homotopy pullback square in Diagram (\ref{htpypbdefiningcoTHHboxcoTHH}) is also a homotopy pushout square.  After smashing with $Hk$, we still have a homotopy pushout and, by stability, also a homotopy pullback.  Passing to homotopy groups and applying \cref{prop-coflat} above then gives the identification.

We thus obtain a comultiplication map
\[ H_*(\coTHH(\Sigma^\infty_+X);k)\to H_*(\coTHH(\Sigma^\infty_+X);k)\Box_{H_*(X;k)}H_*(\coTHH(\Sigma^\infty_+ X);k)\] by applying homology to the comultiplication map produced in \cref{prop:suspspectrum_boxcoalgspectrallevel}, and likewise for the remaining structure maps of a $\Box_{H_*(X;k)}$-Hopf algebra.
\end{proof}

\begin{rem}
One can show that the Hopf structure of \cref{prop:suspspectrum_boxcoalgspectrallevel} arises from suspending space level structure, in the following sense. As mentioned at the beginning of this section, Hess and Shipley \cite[Appendix A]{hs.coTHH} prove that for $X$ simply connected, $\coTHH(\Sigma^\infty_+ X)\simeq \Sigma^\infty_+(\freeloops X)$.  The main tool is a convergence result due to Bousfield and the argument of \cite[Appendix A]{hs.coTHH} applies directly to show that for $X$ simply connected,
\[ \Tot_\Spec( \Sigma^\infty_+(\Map(S^1_\bullet\vee S^1_\bullet, X)))\simeq \Sigma^\infty_+(\Tot_{\Top}\Map(S^1_\bullet\vee S^1_\bullet,X)).\]
There is a standard identification of the right hand side as $\Sigma^\infty_+ (\freeloops X\times_X\freeloops X)$, and the fiberwise diagonal and concatenation product of string topology give a Hopf-monoid structure for $\freeloops X$ as a space over $X$.  Since this structure arises from the same types of fold and diagonal maps on $S^1\vee S^1$ as  appear in the structure from \cref{prop:suspspectrum_boxcoalgspectrallevel}, we may identify the two structures provided that
\[\Tot_\Spec( \Sigma^\infty_+(\Map(S^1_\bullet\vee S^1_\bullet, X)))\simeq \Sigma^\infty_+\freeloops X\Box_{\Sigma^\infty_+ X} \Sigma^\infty_+ \freeloops X.\]
Because $\Sigma^\infty_+$ is a product-preserving functor from unbased spaces to cocommutative coalgebra spectra, we observe that $\Sigma^\infty_+(\Map(S^1_\bullet\vee S^1_\bullet, X))$ is the underlying cosimplicial spectrum of the cosimplical coalgebra spectrum $(\Sigma^\infty_+ X)^{S^1_\bullet\vee S^1_\bullet}$.  The Hess--Shipley argument shows that the totalization in spectra is a suspension spectrum, and hence already a coalgebra spectrum, and therefore
\[\Tot_\Spec(\Sigma^\infty_+(\Map(S^1_\bullet\vee S^1_\bullet, X)))\simeq \Tot_\cC( (\Sigma^\infty_+ X)^{S^1_\bullet\vee S^1_\bullet}).\]
Then the identification of Diagram (\ref{htpypbdefiningcoTHHboxcoTHH}) shows that the right hand side above is precisely $\Sigma^\infty_+\freeloops X\Box_{\Sigma^\infty_+X}\Sigma^\infty_+ \freeloops X$.  This gives a more ``geometric'' description of this Hopf structure, but we require suitable cosimplicial descriptions in order to use this structure in the spectral sequence results of the next section.
\end{rem}

\section{Hopf Structure in the coB\"okstedt Spectral Sequence}\label{sect:thespectralsequence}

An essential tool for computing topological Hochschild homology is the B\"okstedt spectral sequence of \cite{Bo2}. In \cite{BGHSZ}, we showed that there is an analogous coB\"okstedt spectral sequence for computing topological coHochschild homology.  In this section, we show that this spectral sequence has additional algebraic structure, which we exploit in \cref{sect:computations} to make free loop space computations.  We first recall the structure and convergence results about the coB\"okstedt spectral sequence from \cite{BGHSZ}.

\begin{thm}[\cite{BGHSZ}]\label{thm:SpecSeq} Let $k$ be a field. Let $C$ be a coalgebra spectrum.  The Bousfield--Kan spectral sequence for the cosimplicial spectrum $\coTHH^\bullet(C)$ gives a \emph{coB\"okstedt spectral sequence}  for calculating $H_{t-s}(\coTHH(C);k)$ with  $E_2$-page 
\[ E_2^{s, t}=\coHH^{k}_{s,t}(H_*(C;k))\]
given by the classical coHochschild homology of $H_*(C;k)$.
\end{thm}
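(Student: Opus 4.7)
The plan is to obtain the spectral sequence as an instance of the Bousfield--Kan spectral sequence associated to the cosimplicial spectrum $\coTHH^\bullet(C)$ whose totalization, by \cref{defnspectralcoTHH}, is $\coTHH(C)$. Concretely, I would smash $\coTHH^\bullet(C)$ levelwise with $Hk$ to obtain a cosimplicial $Hk$-module spectrum; applying the Bousfield--Kan spectral sequence for the homotopy limit of this cosimplicial object yields a conditionally convergent spectral sequence abutting to $\pi_{t-s}(\Tot(\coTHH^\bullet(C)\sma Hk))$, which we identify with $H_{t-s}(\coTHH(C);k)$ under mild hypotheses (and in any case is the ``target'' of the spectral sequence; more precise convergence statements for suspension spectra are discussed separately).

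Next I would compute the $E_1$ page. At cosimplicial level $n$ we have $\coTHH^n(C)=C^{\sma n+1}$, so
\[ E_1^{n,t}=\pi_t(C^{\sma n+1}\sma Hk)=H_t(C^{\sma n+1};k).\]
Because $k$ is a field, a repeated application of the K\"unneth isomorphism identifies this with $(H_*(C;k))^{\otimes n+1}$ in total degree $t$. So the $E_1$ page is precisely the graded $k$-module $H_*(C;k)^{\otimes \bullet +1}$ underlying the cosimplicial $k$-module $\coHH^\bullet(H_*(C;k))$ of \cref{defn:coHH}.

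Then I would identify the $d_1$-differential with the coHochschild coboundary. The cofaces and codegeneracies of $\coTHH^\bullet(C)$ are induced by the comultiplication $C\to C\sma C$ and counit $C\to \sphere$ (together with the appropriate twist); by naturality of K\"unneth over a field, the maps they induce on mod-$k$ homology are exactly the cofaces and codegeneracies of \cref{defn:coHH} applied to the coalgebra $H_*(C;k)$. The standard description of $d_1$ in the Bousfield--Kan spectral sequence as the alternating sum of the coface maps then matches the differential of the cochain complex $C^*(H_*(C;k))$ from \cref{def:coHH1}, so
\[ E_2^{s,t}=H^s(C^*(H_*(C;k)))_t=\coHH^k_{s,t}(H_*(C;k)),\]
as required.

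The main subtle point is not the identification of the $E_2$ page itself but the fact that the Bousfield--Kan spectral sequence is really the one associated to $\Tot$ in spectra (as in \cref{defnspectralcoTHH}), rather than in coalgebra spectra; I would invoke the discussion after \cref{defnspectralcoTHH} to be explicit about this. Since the full result is stated and proved in \cite{BGHSZ}, the argument here would be essentially a recollection, with the steps above making clear that the $E_2$ page agrees with the classical coHochschild homology of the coalgebra $H_*(C;k)$.
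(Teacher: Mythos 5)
Your proposal is correct and follows essentially the same route as the cited source: the paper itself only recalls this theorem from \cite{BGHSZ}, where it is proved exactly as you describe, by smashing $\coTHH^\bullet(C)$ levelwise with $Hk$, identifying the $E_1$-page via the K\"unneth isomorphism over the field $k$ with the (co)chain complex underlying $\coHH^\bullet(H_*(C;k))$, and matching $d_1$ with the alternating sum of cofaces. The only cosmetic caveat is that the Bousfield--Kan $E_1$-term is conventionally the \emph{normalized} cochain complex $N^s\pi_t$, but since normalization does not change cohomology this does not affect the identification of $E_2$.
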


We also have the following convergence results.  

\begin{prop}[\cite{BGHSZ}]\label{prop:bkconvergence}
If for each $s$ there is an $r$ such that $E_r^{s, s+i} = E_{\infty}^{s, s+i}$, then the coB\"okstedt spectral sequence for $\coTHH(C)$ converges completely to \[\pi_* \Tot (\coTHH^\bullet(C) \wedge Hk).  \]
\end{prop}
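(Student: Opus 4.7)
The plan is to recognize this statement as a direct application of the standard complete-convergence criterion for the Bousfield--Kan spectral sequence of a cosimplicial spectrum, as set out in \cite[IX.5.4]{BK}, which has already been invoked in the coflat case during the proof of \cref{prop-coflat}. The work consists mostly in matching the hypothesis here to the Mittag--Leffler condition needed to apply that criterion.

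First I would recall from \cref{thm:SpecSeq} that the coB\"okstedt spectral sequence is, by construction, the Bousfield--Kan spectral sequence associated to the cosimplicial spectrum $\coTHH^\bullet(C)\sma Hk$. The abutment of this spectral sequence is controlled by the tower of partial totalizations $\{\pi_*\Tot^n(\coTHH^\bullet(C)\sma Hk)\}_{n\geq 0}$, and the spectral sequence always conditionally converges to the inverse limit of this tower. To upgrade conditional convergence to complete convergence to $\pi_*\Tot(\coTHH^\bullet(C)\sma Hk)$, one needs the derived inverse limit $\lim^1$ of the tower to vanish, equivalently the Mittag--Leffler condition on the tower.

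Next I would apply the standard sufficient condition from \cite[IX.5.4]{BK}: if for each fixed total degree the spectral sequence stabilizes at a finite page, then Mittag--Leffler holds for the tower of partial totalizations and complete convergence to $\pi_*\Tot$ follows. The filtration on $\pi_i\Tot(\coTHH^\bullet(C)\sma Hk)$ in the Bousfield--Kan spectral sequence is indexed along the diagonal $\{E^{s,s+i}\}_{s\geq 0}$, and so the condition of stabilization in each total degree is exactly the condition stated in the proposition: for each $s$, there exists $r$ such that $E_r^{s,s+i}=E_\infty^{s,s+i}$.

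The main obstacle (and really the only thing that needs to be checked with care) is the translation between indexing conventions: one must verify that the grading here, with $E_2^{s,t}=\coHH_{s,t}(H_*(C;k))$ and abutment in degree $t-s$, lines up with the grading in \cite[IX]{BK} so that diagonal stabilization genuinely implies Mittag--Leffler on the relevant tower. Once this bookkeeping is in place, the conclusion is immediate from \cite[IX.5.4]{BK}: the hypothesis gives Mittag--Leffler, hence $\lim^1$ vanishes, hence the spectral sequence converges completely to $\pi_*\Tot(\coTHH^\bullet(C)\sma Hk)$.
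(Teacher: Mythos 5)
Your proposal is correct and is essentially the argument behind this result: the paper imports the proposition from \cite{BGHSZ} without reproving it, and the underlying justification there is exactly the complete-convergence criterion of \cite[IX.5.4]{BK} applied to the Bousfield--Kan spectral sequence of the tower of partial totalizations of $\coTHH^\bullet(C)\wedge Hk$, with the degreewise stabilization hypothesis supplying the Mittag--Leffler/$\lim^1$-vanishing condition. The same citation is used in this paper's proof of \cref{prop-coflat} for an identical convergence step, so your route matches the authors' own reasoning.
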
 

\begin{rem}
In \cite{BGHSZ}, the authors use the simplicial model category structure on spectra to define the homotopy limits/totalizations and homotopy types used in defining $\coTHH(C)$.  In this paper, we choose instead to use the $\infty$-categorical framework for these constructions, as is reflected in the omission of the (co)fibrancy conditions that appeared in the statements of these results in \cite{BGHSZ}.
\end{rem}

Note that the general construction of a map of the form $\Hom(X,Y)\sma Z \to \Hom(X,Y\sma Z)$ yields a natural map
\[P\colon \Tot(\coTHH^\bullet(C)) \wedge Hk \to \Tot (\coTHH^\bullet(C) \wedge Hk).\]
From \cite{BGHSZ}, if this map $P$ is an isomorphism in homotopy, and the conditions on $E_r^{s, s+i}$ in \cref{prop:bkconvergence} hold, then the coB\"okstedt spectral sequence for $\coTHH(C)$ converges completely to $H_{*}(\coTHH(C);k)$.  

Further, in \cite{BGHSZ} it is shown that the coB\"okstedt spectral sequence of \cref{thm:SpecSeq} is a spectral sequence of coalgebras. Hence if $C$ is a connected cocommutative coalgebra, for each $r>1$ there is a comultiplication
\[
\psi\colon E_r^{**} \rightarrow E_r^{**} \otimes_{k} E_r^{**},
\]
and the differentials $d_r$ respect the comultiplication. 

Based on algebraic structures in the classical B\"okstedt spectral sequence, one might expect to have additional algebraic structure in the coB\"okstedt spectral sequence. In particular, in \cite{AR} Angeltveit and Rognes show that, under a flatness assumption,  the B\"okstedt spectral sequence
\[
E^2_{*,*} = \HH_*(H_*(R; \mathbb{F}_p)) \Rightarrow H_*(\THH(R); \mathbb{F}_p)
\]
is a spectral sequence of Hopf algebras over $H_*(R;\mathbb{F}_p)$. In this section we consider what the analogous algebraic structure is on the coB\"okstedt spectral sequence. Under coflatness conditions, we prove that the coB\"okstedt spectral sequence is a spectral sequence of $\boxhopf{H_*(C;k)}$s, in the sense of the following definitions. 

\begin{defn}
Let $D$ be a cocommutative coalgebra over a field $k$. A \emph{differential bigraded $\Box_D$-algebra $(E^{*,*}, d)$} is a bigraded $D$-bicomodule $E^{*,*}$, a map of $D$-bicomodules 
\[
d\colon \bigoplus_{q-p=n} E^{p,q} \to \bigoplus_{s-r=n-1} E^{r, s},
\] and a $\Box_D$-multiplication structure
\[
\mu\colon E^{s,t} \Box_D E^{u,v} \to E^{s+u, t+v} 
\]
with a unit $\eta\colon D \to E^{*,*}$, such that the usual associativity and unitality diagrams commute.  The differential is compatible with the product, in the sense that $d$ must satisfy the Leibniz rule:
\[
d \circ \mu = \mu \circ (d \Box_D \id + (-1)^{s+t} \id \Box_D d).
\]
\end{defn}

\begin{defn}
Let $D$ be a cocommutative coalgebra over a field $k$. We say that a spectral sequence $\{E_r, d_r\}$ is a \emph{spectral sequence of $\Box_D$-algebras} if every $D$-comodule $E_r^{s,t}$ is coflat, and for every $r\geq 1$, $(E_r^{*,*}, d_r)$ is a differential bigraded $\Box_D$-algebra, with multiplication $\mu_r$, and if the multiplication $\mu_{r+1}$ is the composite
\begin{multline*}
\mu_{r+1}\colon E_{r+1} \Box_D E_{r+1} \cong H_*(E_r; k) \Box_D H_*(E_r; k) \\\cong H_*(E_r \Box_D E_r; k) \xto{H_*(\mu_r)}  H_*(E_r;k) \cong E_{r+1}.
\end{multline*}
Here the isomorphism $H_*(E_r;k) \Box_D H_*(E_r;k) \cong H_*(E_r \Box_D E_r;k)$ is the K\"unneth isomorphism for $\Box_D$, as in \cref{thrm:boxkunneth}. This uses the hypothesis that $E_r^{s,t}$ is coflat.
\end{defn}

We now also define the notion of a spectral sequence of $\Box_D$-coalgebras. 
\begin{defn}\label{defn:diffbigradedboxcoalg}
Let $D$ be a cocommutative coalgebra over a field $k$. A \emph{differential bigraded $\Box_D$-coalgebra $(E^{*,*}, d)$} is a bigraded $D$-bicomodule $E^{*,*}$, a map of $D$-bicomodules
\[
d\colon \bigoplus_{q-p=n} E^{p,q} \to \bigoplus_{s-r=n-1} E^{r, s},
\] and  a $\Box_D$-comultiplication structure
\[
\triangle\colon E^{s,t}  \to \bigoplus_{\substack{u+w=s\\v+x=t}}E^{u, v} \Box_D E^{w,x} 
\]
with a counit $\epsilon\colon E^{*,*}\to D$, such that the usual coassociativity and counitality diagrams commute. The differential is compatible with the comultiplication, in the sense that $d$ must satisfy the coLeibniz rule:
\[
\triangle \circ d = \big(d\Box \id + (-1)^{u+v}\id \Box d\big) \circ \triangle.
\]
\end{defn}

\begin{defn}\label{defn:boxcoalgss}
Let $D$ be a cocommutative coalgebra over a field $k$. We say that a spectral sequence $\{E_r, d_r\}$ is a \emph{spectral sequence of $\Box_D$-coalgebras} if for every $r\geq 1$, $(E_r^{*,*}, d_r)$ is a differential bigraded $\Box_D$-coalgebra, with comultiplication $\triangle_r$, and if the comultiplication $\triangle_{r+1}$ is the composite
\begin{multline*}
\triangle_{r+1}\colon E_{r+1} \cong H_*(E_r;k) \xto{H_*(\triangle_r)}  H_*(E_r \Box_D E_r;k) \\\xto{\ \phi\ }  H_*(E_r;k) \Box_D H_*(E_r;k) \cong  E_{r+1} \Box_D E_{r+1}.
\end{multline*}
Here the map $\phi$ is the map from the homology of the cotensor to the cotensor of the homologies, as in \cref{thrm:boxkunneth}. 
\end{defn}

We now characterize a $\Box$-Hopf algebra structure on a spectral sequence. 
\begin{defn}\label{defn:diffbigradedboxhopf}
Let $D$ be a cocommutative coalgebra over a field $k$. A \emph{differential bigraded $\Box_D$-Hopf algebra} is a pair $(E^{*,*}, d)$ with the structure of both a differential bigraded $\Box_D$-algebra and a differential bigraded $\Box_D$-coalgebra. 
The multiplication, unit, comultiplication and counit must be compatible, as in \cref{def:boxbialg}. Further, a differential bigraded $\Box_D$-Hopf algebra must have differential $D$-bicomodule maps 
\[
\chi\colon E^{s,t} \to E^{s,t}
\]
satisfying the commutative diagram of \cref{defn:boxhopf}.  \end{defn}

\begin{defn}\label{defn:boxHopfss}
Let $D$ be a cocommutative coalgebra over a field $k$. We say that a spectral sequence $\{E_r, d_r\}$ is a \emph{spectral sequence of $\Box_D$-Hopf algebras}  if for every $r\geq 1$, $(E_r^{*,*}, d_r)$ is a differential bigraded $\Box_D$-Hopf algebra with multiplication $\mu_r$ and comultiplication $\triangle_r$, such that $\{E_r, d_r\}$ is a spectral sequence of $\Box_D$-coalgebras and a spectral sequence of $\Box_D$-algebras using this comultiplication and multiplication. Further, the map 
\[
\chi_{r+1}\colon E_{r+1}^{s,t} \to E_{r+1}^{s,t} 
\]
must be the induced map 
\[
\chi_{r+1} = H_*(\chi_r)\colon H_*(E_r^{s,t};k) \to H_*(E_r^{s,t};k).  
\]
\end{defn}

Having established these definitions, we now consider the algebraic structure on the coB\"okstedt spectral sequence. We first prove that it is a spectral sequence of $\Box$-coalgebras.
\begin{thrm}\label{boxcoalg} Let $C$ be a connected cocommutative coalgebra spectrum, and let $k$ be a field. Then the coB\"okstedt spectral sequence is a spectral sequence of $\Box_{H_*(C;k)}$-coalgebras. \label{sscoalg}
\end{thrm}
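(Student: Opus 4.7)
The plan is to construct the $\Box_{H_*(C;k)}$-coalgebra structure on each page of the coB\"okstedt spectral sequence by transporting structure from a map of cosimplicial spectra induced by the simplicial fold map, exactly mirroring the construction of the $\Box_D$-coalgebra structure on $\coHH_*(D)$ in \cref{sec:coHH}. Specifically, the fold map $\nabla\colon S^1_\bullet \vee S^1_\bullet \to S^1_\bullet$ induces, via the cotensor construction in $\cC(\Spec)$, a morphism of cosimplicial cocommutative coalgebra spectra $\widetilde{\triangle}\colon \coTHH^\bullet(C) \to C^{S^1_\bullet \vee S^1_\bullet}$. Since $S^1_\bullet \vee S^1_\bullet$ is a pushout under $\ast$ and cotensoring into $\cC(\Spec)$ converts this pushout into a pullback levelwise (using \cref{inftyboxprodstructurecoalg}), we identify the target with $\coTHH^\bullet(C) \Box_C \coTHH^\bullet(C)$ cosimplicially over $C$. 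In parallel, the inclusion $\ast \to S^1_\bullet$ of the basepoint gives $\widetilde{\epsilon}\colon \coTHH^\bullet(C)\to C$, which will supply the counit.

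Smashing with $Hk$ and passing to Bousfield--Kan spectral sequences, $\widetilde{\triangle}$ yields a map of spectral sequences from the coB\"okstedt SS for $C$ to the BK spectral sequence for $(\coTHH^\bullet(C) \Box_C \coTHH^\bullet(C)) \wedge Hk$. At the $E_2$-page, the cosimplicial Eilenberg--Zilber theorem (\cref{prop:EZ}) combined with the K\"unneth map over $\Box$ (\cref{thrm:boxkunneth}) produces a comparison
\[
\pi_*\Tot\bigl((\coTHH^\bullet(C) \Box_C \coTHH^\bullet(C)) \wedge Hk\bigr) \longrightarrow \coHH_*(H_*(C;k)) \Box_{H_*(C;k)} \coHH_*(H_*(C;k)),
\]
and composing with $\widetilde{\triangle}$ produces the $E_2$-comultiplication $\triangle_2$. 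Because both this comultiplication and the one on $\coHH_*(H_*(C;k))$ from \cref{prop:coHHcoalg} are induced from the \emph{same} cosimplicial map arising from $\nabla$, they agree; this provides the base case identification with the known $\Box_{H_*(C;k)}$-coalgebra structure on coHochschild homology. The same procedure at each later page $E_r$ produces $\triangle_r\colon E_r^{*,*}\to E_r^{*,*}\Box_{H_*(C;k)}E_r^{*,*}$ and the counit $\epsilon_r$ induced by $\widetilde{\epsilon}$.

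The axioms of \cref{defn:boxcoalgss} will then be verified by tracing through the cosimplicial origin of these structure maps: coassociativity and counitality of $\triangle_r$ reduce to the corresponding simplicial identities for $\nabla$ and the basepoint inclusion (those same diagrams of simplicial sets used in \cref{coHHbialg2}); the coLeibniz rule holds because BK spectral sequence differentials are natural with respect to maps of cosimplicial spectra, so $\widetilde{\triangle}$ intertwines $d_r$ on $E_r$ with the tensor-differential on the target, which we transport across the K\"unneth map; and the inductive compatibility that $\triangle_{r+1}$ is induced by $\triangle_r$ is automatic, since the entire structure is the image of a single cosimplicial map under the BK spectral sequence functor.

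The main obstacle I expect is the careful identification at the spectral sequence level of the map from the BK spectral sequence of $(\coTHH^\bullet(C) \Box_C \coTHH^\bullet(C)) \wedge Hk$ into the cotensor of two copies of the coB\"okstedt spectral sequence. The cosimplicial Eilenberg--Zilber and K\"unneth-over-$\Box$ tools of \cref{sec:coHH} give the ingredients at the cochain and homology level, but one must track these through the BK filtration page by page and verify that the resulting comparison is compatible with the $\Box$-product on each $E_r$. This is the technical step that converts cosimplicial structure into spectral-sequence structure, and where connectedness of $C$ is used to ensure the relevant convergence and identification of the $E_2$-page.
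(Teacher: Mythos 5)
Your overall strategy coincides with the paper's: induce the comultiplication from the fold map $\nabla\colon S^1_\bullet\vee S^1_\bullet\to S^1_\bullet$ at the cosimplicial level, pass to Bousfield--Kan spectral sequences, and use the Eilenberg--Zilber and K\"unneth-over-$\Box$ machinery of \cref{sec:coHH} to land in $E_r^{*,*}\Box_{H_*(C;k)}E_r^{*,*}$; the counit and the coLeibniz argument are also as in the paper. However, the step you flag as ``the main obstacle'' --- identifying the Bousfield--Kan spectral sequence of the wedge-cotensor with the $\Box$-product of two copies of the coB\"okstedt spectral sequence --- is genuinely the heart of the proof, and your proposal does not supply it. Moreover, the way you set it up makes it harder than necessary: you propose to form $\coTHH^\bullet(C)\Box_C\coTHH^\bullet(C)$ as a levelwise pullback of coalgebra \emph{spectra} and then analyze its spectral sequence. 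Since smash product and homology do not commute with the relevant limits of spectra, there is no direct handle on the $E_1$-page of such a pullback, and \cref{prop:EZ} and \cref{thrm:boxkunneth} are statements about cosimplicial $D$-bicomodules and cochain complexes, not about spectra, so they cannot be applied where you invoke them.

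The paper's resolution avoids ever forming a $\Box$-product of spectra. One writes $S^1_\bullet\vee S^1_\bullet$ as the coequalizer of the two maps $S^1_\bullet\amalg\ast\amalg S^1_\bullet\rightrightarrows S^1_\bullet\amalg S^1_\bullet$, so that $C^{S^1_\bullet\vee S^1_\bullet}$ maps to $C^{S^1_\bullet\amalg S^1_\bullet}$ equalizing the two induced maps to $C^{S^1_\bullet\amalg\ast\amalg S^1_\bullet}$. The disjoint-union cotensors are levelwise \emph{smash powers}, and Bousfield--Kan's work on pairings (together with the shuffle map, quasi-inverse to the $\AW$ map used in \cite{BGHSZ}) identifies their spectral sequences with $E_r^{*,*}\otimes_k E_r^{*,*}$ and $E_r^{*,*}\otimes_k H_*(C;k)\otimes_k E_r^{*,*}$ from the $E_2$-page on. Because the map out of the spectral sequence $D_r^{*,*}$ of the wedge-cotensor equalizes the two parallel maps, the universal property of the equalizer produces the comparison $D_r^{*,*}\to E_r^{*,*}\Box_{H_*(C;k)}E_r^{*,*}$, everything now taking place in the algebraic category where the $\Box$-K\"unneth map of \cref{thrm:boxkunneth} applies. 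Note also that the comultiplication so obtained is only a map \emph{into} the $\Box$-product --- no coflatness is needed for the coalgebra statement, which is why \cref{sscoalg} has no hypotheses beyond connectivity, in contrast to the Hopf algebra statement in \cref{thrm:algebrass}. Your proposal would be complete once this equalizer presentation and the Bousfield--Kan pairing identification are inserted at the step you identified.
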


\begin{proof}
By definition, the coB\"okstedt spectral sequence for $C$ is the Bousfield--Kan spectral sequence for the cosimplicial object $\coTHH^\bullet(C) \sma Hk$, which we will denote  $X^\bullet$. 

We consider the following commutative diagram of simplicial sets
\[\xymatrix{ S^1_\bullet &S^1_\bullet \vee S^1_\bullet\ar[l]_{\nabla}\\
&S^1_\bullet \amalg S^1_\bullet\ar[u]\ar[ul]^{\nabla'} }
\]
where $\nabla$ and $\nabla'$ are both simplicial fold maps. 
Note that we may identify $S^1_\bullet \vee S^1_\bullet$ with the coequalizer of the two maps $S^1_\bullet \amalg \ast\amalg S^1_\bullet\rightrightarrows S^1_\bullet \amalg S^1_\bullet$
that send the point $\ast$ to the basepoint in either copy of $S^1_\bullet$.

On cosimplicial cotensors, and after smashing with $Hk$, the diagram above yields a diagram of cosimplicial spectra
\[\xymatrix{C^{S^1_\bullet}\sma Hk \ar[r]\ar[dr]& C^{S^1_\bullet \vee S^1_\bullet}\sma Hk\ar[d]\\
&C^{S^1_\bullet \amalg S^1_\bullet}\sma Hk.}
\]
Furthermore the right vertical map here equalizes the two induced maps
\[ C^{S^1_\bullet \amalg S^1_\bullet}\sma Hk\rightrightarrows C^{S^1_\bullet\amalg \ast\amalg S^1_\bullet}\sma Hk.\]

These maps of cosimplicial spectra induce maps of spectral sequences on the corresponding Bousfield--Kan spectral sequences:  let $E_r^{*,*}$ denote the Bousfield--Kan spectral sequence for $X^\bullet$ (that is, the coB\"okstedt spectral sequence), let $D_r^{*,*}$ denote the spectral sequence for $C^{S^1_\bullet \vee S^1_\bullet}\sma Hk$, let $'\!D_r^{*,*}$ denote the spectral sequence for $C^{S^1_\bullet \amalg S^1_\bullet}\sma Hk$ and let $''\!D_r^{*,*}$ denote the spectral sequence for $C^{S^1_\bullet \amalg \ast \amalg S^1_\bullet}\sma Hk$.   In this notation, we have maps of spectral sequences
\[ \xymatrix{ E_r^{*,*} \ar[r]\ar[dr]& D_r^{*,*}\ar[d]\\
& '\!D_r^{*,*}\ar@<-.5ex>[d]\ar@<.5ex>[d]\\
&''\!D_r^{*,*}
}\]
where the map $D_r^{*,*}\to{} '\!D_r^{*,*}$ equalizes the two lower vertical maps.

It is proved in \cite{BGHSZ} that the Alexander--Whitney map identifies the spectral sequence $'\!D_r^{*,*}$ with the tensor product spectral sequence $E_r^{*,*}\otimes_k E_r^{*,*}$ from the $E_2$-page on, using Bousfield and Kan's work on pairings in Bousfield--Kan spectral sequences \cite{BKquadrant,BKpairings}.   In fact, the identifications of Bousfield and Kan  produce natural maps of spectral sequences
\[ '\!D_r^{*,*} \to E_r^{*,*}\otimes_k E_r^{*,*}\text{\quad and\quad } ''\!D_r^{*,*}\to E_r^{*,*}\otimes H_*(C) \otimes E_r^{*,*}.\]
On the 1-pages, the maps in this direction are given by the shuffle map: this is the quasi-inverse of the map $\AW\colon E_1^{*,*}\otimes_k E_1^{*,*} \to {}'\!D_1^{*,*}$ used in \cite{BGHSZ}.

Thus, on the $1$-pages, we have an induced map 
\[ \xymatrix{ D_1^{*,*}\ar[d]\ar@{-->}[r]& E_1^{*,*}\Box_{H_*(C;k)} E_1^{*,*} \ar[d] \\
'\!D_1^{*,*}\ar@<-.5ex>[d]\ar@<.5ex>[d]\ar[r]^-{\sh}& E_1^{*,*}\otimes_k E_1^{*,*}\ar@<-.5ex>[d]\ar@<.5ex>[d]\\
''\!D_1^{*,*}\ar[r]^-{\sh}& E_1^{*,*}\otimes_k H_*(C;k) \otimes_k E_1^{*,*}}
\]
where $\sh$ denotes the shuffle map of the Eilenberg--Zilber theorem.   Hence the composite of the spectral sequence map $E_1^{*,*}\to D_1^{*,*}$ with the dashed arrow above induces a $\Box_{H_*(C;k)}$-comultiplication structure on $E_1^{*,*}$ which we denote by $\triangle_1$.   The comultiplication $\triangle_1$ satisfies the coLeibniz rule because the composite
\[E_1^{*,*}\to '\!\!D_1^{*,*}\xto{\sh} E_1^{*,*}\otimes_k E_1^{*,*}\]
satisfies the coLeibniz rule and the differential $d_r\Box 1 \pm 1\Box d_r$ on $E_r^{*,*}\Box_{H_*(C;k)}E_r^{*,*}$ is the restriction of the differential $d_r\otimes 1\pm 1\otimes d_r$ to the elements of $E_r^{*,*}\otimes_k E_r^{*,*}$ that are equalized.

The comultiplication on the $E_2$-page is induced similarly.  By the usual calculation of the $1$-page of a Bousfield--Kan spectral sequence, we make the identifications 
\begin{align*}
'\!D_1^{s,*}&=N^s\left(H_*(C;k)^{\otimes_k \bullet}\otimes_k H_*(C;k)^{\otimes_k \bullet}\right)\\
''\!D_1^{s,*}&=N^s\left(H_*(C;k)^{\otimes_k \bullet} \otimes_k H_*(C;k) \otimes_k H_*(C;k)^{\otimes_k \bullet}\right).
\end{align*}
Using these identifications, and the fact that normalization commutes with equalizers, we see that the $D_1$-page is
\[D_1^{s,*}=N^s\big(H_*(C;k)^{\otimes_k \bullet}\Box_{H_*(C;k)}H_*(C;k)^{\otimes_k\bullet}\big).\]
The horizontal shuffle maps $\sh$ from the Eilenberg--Zilber map above induce isomorphisms on homology, so we have a diagram of maps of bigraded $k$-modules
\[ \xymatrix{ D_2^{*,*}\ar[d]\ar@{-->}^-{\phi_2}[r]& E_2^{*,*}\Box_{H_*(C;k)} E_2^{*,*} \ar[d] \\
'\!D_2^{*,*}\ar@<-.5ex>[d]\ar@<.5ex>[d]\ar[r]^-{\cong}& E_2^{*,*}\otimes_k E_2^{*,*}\ar@<-.5ex>[d]\ar@<.5ex>[d]\\
''\!D_2^{*,*}\ar[r]^-{\cong}& E_2^{*,*}\otimes_k H_*(C;k) \otimes_k E_2^{*,*}}
\]
The induced dotted map is the map in the $\Box_{H_*(C;k)}$-K\"unneth Theorem, \cref{thrm:boxkunneth}.  We may thus define the comultiplication $\triangle_2$ on $E_2$ to be the composite of the map $E_2^{*,*}\to D_2^{*,*}$ and this induced dotted map; this clearly satisfies the condition of \cref{defn:boxcoalgss}. Again, since the middle map is a map of spectral sequences and the differential on $E_2^{*,*}\Box_{H_*(C;k)}E^{*,*}_2$ is restricted from that on $E_2^{*,*}\otimes_k E_2^{*,*}$, this comultiplication also satisfies the coLeibniz rule.

Because the middle and lower horizontal maps above are maps of spectral sequences, repeated application of the K\"unneth theorem for $k$-modules gives a similar diagram on the $r$-pages for $r\geq 2$; the K\"unneth theorem for $\Box_{H_*(C;k)}$-comodules then induces the desired comultiplications $\triangle_r$ at each level.
\end{proof}

\begin{rem} In the proof of \cref{sscoalg}, we use Bousfield and Kan's shuffle pairing $'\!D_1^{*,*}\to E_1^{*,*}\otimes_k E^{*,*}_1$, which---as remarked---is not the comparison map $\AW$ between these spectral sequences used in defining the coalgebra structure in \cite{BGHSZ}. Instead, the map $\AW$ is the quasi-inverse to Bousfield and Kan's pairing, which is constructed from the K\"unneth isomorphism  and shuffle/Alexander--Whitney maps.  This requires working over a field. Since the comparison map in \cref{sscoalg} and the one of \cite{BGHSZ} are quasi-inverses at the $E_1$-page, we conclude that the $\Box_{H_*(C;k)}$-coalgebra structure on $E_r^{*,*}$ of \cref{sscoalg} is the restriction of the $k$-coalgebra structure produced in \cite{BGHSZ}.
\end{rem}

Simplicial maps of the circle also induce the product structure on the coB\"okstedt spectral sequence. As in \cref{sec:coHH}, we must use a double circle model of $S^1_\bullet$ to get a pinch map that is indeed simplicial. Recall the double circle model $dS^1_{\bullet}$ defined earlier:
\[
dS^1_\bullet =(\Delta^1\amalg \Delta^1)\amalg_{(\partial\Delta^1\amalg\partial\Delta^1)}\partial\Delta^1.  
\]  
There is a simplicial pinch map
 \[
 \psi\colon dS^1_{\bullet} \to S^1_{\bullet} \vee S^1_{\bullet}
 \]
that collapses $\partial\Delta^1$ to a point. For $D$ a $k$-coalgebra, let $d\coHH^\bullet(D)$ denote the cosimplicial $k$-coalgebra $D^{dS^1_\bullet}$.  To parallel \cref{def:coHH1}, we let $d\coHH_*(D)$ denote the homology of the chain complex (of $k$-modules) $C^*(d\coHH^\bullet(D))$ associated to $d\coHH^\bullet(D)$ under the Dold--Kan correspondence.  We will need the following lemma comparing coHochschild homology defined with the standard simplicial model of the circle to that defined with the double circle model. Recall that $\pi\colon dS^1_{\bullet} \to S^1_{\bullet}$ is the collapse map that takes the second $\Delta^1$ in $dS^1_{\bullet}$ to the basepoint. This lemma is the purely algebraic version of \cref{doublecirclearefine}.

\begin{lemma}\label{lemma:pi}
 Let $D$ be a cocommutative (graded) $k$-coalgebra.  The map $\pi\colon dS^1_\bullet \to S^1_\bullet$ induces an isomorphism of (bi-)graded abelian groups. 
\[\pi\colon \coHH_*(D)\to d\coHH_*(D)\]
\end{lemma}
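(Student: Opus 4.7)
The plan is to adapt the argument of \cref{doublecirclearefine} to the cochain-complex setting. Observe that $S^1_\bullet = \Delta^1_\bullet \cup_{\partial\Delta^1_\bullet} \ast$ and $dS^1_\bullet = \Delta^1_\bullet \cup_{\partial\Delta^1_\bullet} \Delta^1_\bullet$, and that $\pi$ is induced by a map of these pushout diagrams whose only non-identity component is the collapse $\Delta^1_\bullet \to \ast$. Applying the contravariant cotensor functor $D^{(-)}$ converts pushouts into pullbacks in cosimplicial cocommutative coalgebras, and---since the forgetful functor to cosimplicial $k$-modules preserves limits---in cosimplicial $k$-modules, which translate via the Dold--Kan correspondence to pullback squares of normalized cochain complexes.

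At cosimplicial level $n$, the leg $D^{\Delta^1_n} = D^{\otimes(n+2)} \to D^{\partial\Delta^1_n} = D^{\otimes 2}$ is the projection onto the two boundary Cartesian factors, obtained by applying the counit to the interior factors; this is surjective because $D$ is coaugmented. Consequently each pullback square fits into a levelwise Mayer--Vietoris short exact sequence of cochain complexes, producing a long exact sequence on cohomology. The key input is \cref{cotensorwithDeltaoneinftycate}, which (via the extra degeneracy argument) implies that $N^*(D^{\Delta^1_\bullet})$ has cohomology $D$ concentrated in degree $0$. Similarly $N^*(D^{\partial\Delta^1_\bullet})$ has cohomology $D \otimes D$ concentrated in degree $0$ (since $\partial\Delta^1_\bullet$ is the constant simplicial set on two points), and $N^*(D^\ast) = D$ in degree $0$; the coaugmentation $N^*(D^\ast) \to N^*(D^{\Delta^1_\bullet})$ induced by $\Delta^1_\bullet \to \ast$ is therefore a quasi-isomorphism.

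Applying the five lemma to the resulting map of long exact sequences---whose vertical maps are the identity on two legs of the cospan and the above quasi-isomorphism on the third---yields the required isomorphism $\pi_*\colon \coHH_*(D) \xrightarrow{\cong} d\coHH_*(D)$. The main technical subtlety lies in correctly identifying the Mayer--Vietoris connecting maps, which determine the cohomology of $\coHH_*$ in low degrees; degreewise surjectivity is routine from the coaugmentation hypothesis on $D$.
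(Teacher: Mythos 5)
Your argument breaks down at the claim that ``the forgetful functor to cosimplicial $k$-modules preserves limits.'' It does not: limits of cocommutative coalgebras are not computed on underlying $k$-modules. As \cref{boxprodforcoalg} records, the pullback of $B_1\to C\leftarrow B_2$ in cocommutative $k$-coalgebras is the cotensor product $B_1\Box_C B_2$, i.e.\ the equalizer of $B_1\otimes B_2\rightrightarrows B_1\otimes C\otimes B_2$, which is a very different object from the $k$-module fiber product $B_1\times_C B_2$. Concretely, at cosimplicial level $n$ the pullback describing $D^{dS^1_n}$ is $D^{\otimes(n+2)}\Box_{D^{\otimes 2}}D^{\otimes(n+2)}\cong D^{\otimes(2n+2)}$ (as it must be, since $dS^1_\bullet$ has $2n+2$ nondegenerate-and-degenerate $n$-simplices), whereas the $k$-module fiber product of the two counit projections $D^{\otimes(n+2)}\to D^{\otimes 2}$ has underlying dimension $2\dim D^{\otimes(n+2)}-\dim D^{\otimes 2}$. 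Since the pullback square is not a fiber product of cochain complexes, there is no Mayer--Vietoris short exact sequence $0\to P\to A\oplus B\to C\to 0$, no associated long exact sequence, and hence nothing to which to apply the five lemma. The surjectivity of the counit projections, which you offer as the hypothesis for Mayer--Vietoris, is beside the point.

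What is actually needed is the homological algebra of the cotensor product. The paper's proof identifies $\pi$ at the cosimplicial level with
\[
D^{\otimes\bullet+2}\Box_{D\otimes D}D\longrightarrow D^{\otimes\bullet+2}\Box_{D\otimes D}D^{\otimes\bullet+2},
\]
applies the Eilenberg--Zilber theorem for $D\otimes D$-comodules (\cref{prop:EZ}) to pass to $C^*(D^{\otimes\bullet+2})\Box_{D\otimes D}C^*(D)\to C^*(D^{\otimes\bullet+2})\Box_{D\otimes D}C^*(D^{\otimes\bullet+2})$, and then uses that $C^*(D^{\otimes\bullet+2})$ is an \emph{injective} resolution of $D$ as a $D\otimes D$-bicomodule (the terms are extended comodules), so that cotensoring the augmentation quasi-isomorphism $D\to C^*(D^{\otimes\bullet+2})$ against an injective complex preserves quasi-isomorphisms, by the comparison result of Eilenberg--Moore. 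Your observation that $N^*(D^{\Delta^1_\bullet})\simeq D$ via extra degeneracies is correct and is the analogue of \cref{cotensorwithDeltaoneinftycate}, but on its own it only controls one corner of the cospan; without the injectivity/Cotor-vanishing input, knowing the cohomology of the three corners does not determine the cohomology of the cotensor equalizer. You would need to replace your Mayer--Vietoris step with this resolution argument (or with the dual K\"unneth spectral sequence of \cref{thrm:boxkunneth} together with the coflatness of the terms $D^{\otimes n+2}$ over $D\otimes D$) for the proof to go through.
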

\begin{proof}
Following the proof of \cref{doublecirclearefine}, we may take cotensors with the defining pushout diagrams of simplicial sets to model $\pi$ as the following map on pullbacks in cocommutative $k$-coalgebras:
\[\xymatrix{
D^{\Delta^1_\bullet} \ar[r] & D^{\partial \Delta^1_\bullet} & D^{\Delta^1_\bullet} \ar[l]\\
D^{\Delta^1_\bullet}\ar@{=}[u] \ar[r]&D^{\partial \Delta^1_\bullet}   \ar@{=}[u] & D^\ast\ar[u]\ar[l]
}
\]
 As in \cref{boxprodforcoalg}, pullback in the category of cocommutative $k$-coalgebras agrees with the definition of $\Box$-product.  The coalgebra $D^{\partial \Delta^1_\bullet}$ is $D\otimes D$; this allows us to identify $\pi$ at the cosimplicial level with the map
\[ D^{\otimes \bullet+2} \Box_{D\otimes D} D \to D^{\otimes \bullet +2}\Box_{D\otimes D}D^{\otimes \bullet +2}.\]
The map $\pi\colon \coHH_*(D)\to d\coHH_*(D)$ is the map on homology induced by the associated map of cochain complexes
\[ C^*(D^{\otimes \bullet+2} \Box_{D\otimes D} D) \to C^*(D^{\otimes \bullet +2}\Box_{D\otimes D}D^{\otimes \bullet +2}).\]
Observe that for each $n$, $D^{\otimes n+2}$ is a $D\otimes D$-comodule and by the Eilenberg--Zilber theorem for $D\otimes D$-comodules, this map is quasi-isomorphic to the map
\[ C^*(D^{\otimes \bullet+2})\Box_{D\otimes D} C^*(D) \to C^*(D^{\otimes \bullet+2})\Box_{D\otimes D} C^*(D^{\otimes \bullet+2}).
\]
Now $C^*(D^{\otimes \bullet +2})$  is an injective resolution of $D$ as a $D\otimes D$-bicomodule;  the map $D\to C^*(D^{\Delta^1_\bullet})$ is the augmentation of this resolution.   By \cite[Proposition 4.1]{EilenbergMoore66} (and also see the discussion at the beginning of \S5 there), this map induces an isomorphism on homology as desired.
\end{proof}

\begin{rem}
Observe that the double circle $dS^1$ used in this section is a different version of the double circle than the model $d'S^1$ used in \cref{sec:coHH}. In particular, the orientation of one of the 1-simplices is reversed. In this section we use the model $dS^1$ so that the flip map will be simplicial. We choose to use the model $d'S^1$ in \cref{sec:coHH} because it simplifies the chain-level formulas. This follows the choices made in \cite{AR} in the dual case. 
\end{rem}

\begin{thrm}\label{thrm:algebrass} Let $C$ be a connected cocommutative coalgebra spectrum.  If for $r\geq 2$, each $E_r^{*,*}(C)$ is coflat over $H_*(C;k)$,  then the coB\"okstedt spectral sequence is a spectral sequence of $\Box_{H_*(C;k)}$-Hopf algebras.
 \end{thrm}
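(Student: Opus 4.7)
The plan is to enhance the $\Box_{H_*(C;k)}$-coalgebra structure of \cref{boxcoalg} with a compatible multiplication and antipode, following the simplicial template used for $\coHH$ in \cref{coHHbialg2} and for the suspension spectrum case in \cref{prop:suspspectrum_boxcoalgspectrallevel}. The multiplication will be induced by the double circle $dS^1_\bullet$ together with its simplicial pinch map $\psi\colon dS^1_\bullet \to S^1_\bullet\vee S^1_\bullet$ and collapse map $\pi\colon dS^1_\bullet \to S^1_\bullet$.

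The first step is to cotensor $C$ with the zigzag
\[\xymatrix@R=1.2pc{& dS^1_\bullet\ar[dl]_{\pi}\ar[dr]^{\psi}&\\ S^1_\bullet && S^1_\bullet\vee S^1_\bullet}\]
and smash with $Hk$ to obtain a zigzag of cosimplicial spectra. Passing to Bousfield--Kan spectral sequences and applying the shuffle pairings of Bousfield--Kan exactly as in the proof of \cref{boxcoalg}, one identifies the spectral sequence for $C^{S^1_\bullet\vee S^1_\bullet}\sma Hk$ from the $E_2$-page onward with $E_r^{*,*}\Box_{H_*(C;k)}E_r^{*,*}$, using the equalizer description of the $\Box$-product together with the $\Box_D$-K\"unneth theorem \cref{thrm:boxkunneth}; this is where the coflatness hypothesis on each $E_r^{*,*}$ is essential. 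Simultaneously, the spectral sequence map induced by $\pi$ is an isomorphism from the $E_2$-page onward by a spectral sequence refinement of \cref{lemma:pi} (the algebraic analog of \cref{doublecirclearefine}). Composing the $\psi$-map with the inverse of the $\pi$-map produces the multiplication
\[\mu_r\colon E_r^{*,*}\Box_{H_*(C;k)}E_r^{*,*}\to E_r^{*,*},\]
and the Leibniz rule follows because the differentials on the $\Box$-product spectral sequence are restricted from those on the corresponding tensor-product spectral sequence, for which the Leibniz rule is known.

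Step two is to verify the bialgebra compatibility axioms of \cref{def:boxbialg} and define the antipode. All compatibility axioms reduce to commutative or up-to-homotopy commutative diagrams of simplicial sets built from $\nabla$, $\psi$, $\pi$, basepoint inclusion, and basepoint retraction, exactly as in the proof of \cref{coHHbialg2}. Since cotensoring with $C$ and smashing with $Hk$ are functorial operations, each such simplicial diagram produces the corresponding diagram on every $r$-page of the spectral sequence. The antipode $\chi_r\colon E_r^{*,*}\to E_r^{*,*}$ is induced by the simplicial flip map on $dS^1_\bullet$, and the defining diagram of \cref{defn:boxhopf} follows from the homotopy-commutative simplicial diagram involving the triple circle $tS^1_\bullet$ displayed at the end of the proof of \cref{prop:suspspectrum_boxcoalgspectrallevel}.

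The main obstacle is the careful propagation from $E_r$ to $E_{r+1}$: one must check that $\mu_{r+1}$ really is obtained from $H_*(\mu_r)$ via the $\Box_{H_*(C;k)}$-K\"unneth map, as required by \cref{defn:boxHopfss}, rather than merely being some multiplication compatible with the existing structure. This identification holds because the Bousfield--Kan pairings are natural across all pages and because the coflatness hypothesis at every level upgrades the $\Box_D$-K\"unneth comparison map of \cref{thrm:boxkunneth} to an honest isomorphism, so the induced multiplication on $E_{r+1}^{*,*}$ is unambiguous. A related subtlety is that one must interchange the Eilenberg--Zilber shuffle equivalence with the equalizer defining the $\Box$-product, which uses that normalization commutes with equalizers, exactly as in the proof of \cref{boxcoalg}.
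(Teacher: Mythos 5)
Your proposal is correct and follows essentially the same route as the paper: the multiplication is built from the zigzag $S^1_\bullet \xleftarrow{\pi} dS^1_\bullet \xrightarrow{\psi} S^1_\bullet\vee S^1_\bullet$, the identification of the spectral sequence for $C^{S^1_\bullet\vee S^1_\bullet}\sma Hk$ with $E_r^{*,*}\Box_{H_*(C;k)}E_r^{*,*}$ is carried out by an inductive application of the $\Box_D$-K\"unneth theorem using coflatness, $\pi$ is inverted from the $E_2$-page on via the algebraic double-circle lemma, and the antipode and Hopf compatibilities come from the flip map and the triple circle. The paper organizes the K\"unneth step as an explicit induction showing the comparison map $\phi_r$ is an isomorphism for all $r\geq 2$, which is exactly the ``propagation'' point you flag at the end, so no gap remains.
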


\begin{proof}
The comultiplication is given by \cref{sscoalg} above.  To obtain the multiplication, we use the double circle simplicial model of $S^1$ denoted $dS^1_{\bullet}$. Let $dE_r^{*,*}$ denote the Bousfield-Kan spectral sequence for the cosimplicial object $C^{dS^1_{\bullet}} \sma Hk$. There are natural maps 
\[
E_r^{*,*}\square_{H_*(C;k)} E_r^{*,*} \xleftarrow{\ \phi_r\ } D_r^{*,*} \xto{\ \psi\ } dE_r^{*,*} \xleftarrow{\ \pi\ } E_r^{*,*}.\]
The map $\phi_r$ is the comparison map defined  in the proof of \cref{sscoalg} above. The map $\psi$ is induced by the simplicial pinch map $\psi\colon dS^1_{\bullet} \to S^1_{\bullet}\vee S^1_\bullet,$ and the map $\pi$ is induced by the weak equivalence $dS^1_{\bullet} \to S^1_{\bullet}$ given by collapsing. By \cref{lemma:pi} the map $\pi$ is an isomorphism when $r\geq 2$. 

We use induction to show that the map $\phi_r$ is an isomorphism for $r\geq2$.  We first establish the isomorphism when $r=2$. As in \cref{sscoalg} above, the $D_1$-page can be identified as 
 \[D_1^{s,*}=N^s\left(H_*(C;k)^{\otimes_k \bullet}\Box_{H_*(C;k)}H_*(C;k)^{\otimes_k\bullet}\right) \cong E_1^{s,*} \square_{H_*(C;k)} E_1^{s,*},\]
because the normalization of a cosimplicial object commutes with equalizers, as both are given at each level by limit constructions.

On the 1-page the map $\phi_1$ above is the shuffle map
\[
E_1^{s,*} \square_{H_*(C;k)} E_1^{s,*} \to [E_1^{*,*} \square_{H_*(C;k)} E_1^{*,*}]_{s,*}.
\]
which by \cref{prop:EZ} induces an isomorphism on homology:
\[
H_*(D_1^{*,*};k) \to H_*( E_1^{*,*}\square_{H_*(C;k)} E_1^{*,*};k).
\] 
The left hand side is $D_2^{*,*}$. To calculate the right hand side, we can replace $E_1^{*,*}$, the normalized chain complex $N^*\left(H_*(C;k)^{\otimes_k \bullet}\right)$,  by the quasi-isomorphic unnormalized complex $H_*(C;k)^{\otimes *}$, which is coflat over $H_*(C;k)$.  Since $E_2^{*,*} = H_*(E_1^{*,*};k)$ is coflat over $H_*(C;k)$ by hypothesis, \cref{thrm:boxkunneth} implies the right hand side is 
\[H_*( E_1^{*,*}; k) \square_{H_*(C;k)} H_*(E_1^{*,*};k),\]
and hence $\phi_2=H_*(\phi_1)$ is an isomorphism
\[
D_2^{*,*} \to E_2^{*,*}\square_{H_*(C;k)} E_2^{*,*}.
\]
 
Now assume that $\phi_r$ is an isomorphism for some fixed $r\geq 2$. The isomorphism
\[D_r^{*,*} \xto{\ \phi_r\ }E_r^{*,*}\square_{H_*(C;k)} E_r^{*,*}\]
induces an isomorphism on homology.
By hypothesis, $E_{r+1}^{*,*}$ is coflat over $H_*(C;k)$, so \cref{thrm:boxkunneth} applies and the map $\phi_{r+1} = H_*(\phi_r)$ is an isomorphism
\[D_{r+1}^{*,*} \xto{\ \cong\ }  E_{r+1}^{*,*}\square_{H_*(C;k)} E_{r+1}^{*,*}.
\]
Thus by induction the map $\phi_r$ is an isomorphism for all $r\geq 2.$ 

The multiplication 
\[E_r^{*,*}\square_{H_*(C;k)} E_r^{*,*} \xto{\ \mu_r\ }  E_r^{*,*}\]
is then given by $\mu_r = \pi^{-1}\psi\phi_r^{-1}.$
Since the maps $\phi_r$ constructed in the proof of \cref{sscoalg} satisfy the coLeibniz rule and the remaining maps used in constructing $\mu_r$ are maps of spectral sequences, $\mu_r$ satisfies the Leibniz rule.

Finally, we define the antipode map $\chi_r\colon E_r^{*,*}\to E_r^{*,*}$ to be the map of spectral sequences 
\[ \chi\colon E_r^{*,*}\xto{\ \pi\ } dE_r^{*,*} \xto{\ \chi'\ } dE_r^{*,*}\xto{\ \pi^\inv\ } E_r^{*,*}.\]
The middle map $\chi'$ is the map of spectral sequences  induced by the flip map on the double circle and the proof that this satisfies the required conditions for an antipode is analogous to the proof of \cref{prop:suspspectrum_boxcoalgspectrallevel}.  Again, we use that $\pi$ is an isomorphism for $r\geq 2$.
\end{proof}

\section{Computational results}\label{sect:computations}

As discussed in \cref{sec:freeloop}, when $X$ is a simply connected space, $\coTHH(\Sigma_+^{\infty} X)$ can be identified with the suspension spectrum of the free loop space on $X$,
\[
\Sigma^\infty_+ \freeloops X \xrightarrow{\simeq}  \coTHH(\Sigma_+^{\infty} X).
\]
The coB\"okstedt spectral sequence thus provides a method for computing the homology of the free loop space \cite[Corollary 4.5]{BGHSZ}. 

\begin{prop}[\cite{BGHSZ}]\label{cor.conv.2}
{Let $X$ be a simply connected space.  If for each $s$ there is an $r$ such that $E_r^{s, s+i} = E_{\infty}^{s, s+i}$, the coB\"okstedt spectral sequence arising from the coalgebra $\Sigma_+^{\infty} X$ converges completely to 
\[H_{*}(\coTHH(\Sigma_+^{\infty} X);k) \cong H_*(\freeloops X; k).\]
}
\end{prop}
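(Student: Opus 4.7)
The plan is to derive this statement by combining Proposition \ref{prop:bkconvergence} with the Malkiewich--Hess--Shipley equivalence $\coTHH(\Sigma_+^\infty X) \simeq \Sigma_+^\infty \freeloops X$ already invoked in Lemma \ref{lem:comultfreeloops}. The hypothesis on $E_r^{s,s+i}$ is exactly what Proposition \ref{prop:bkconvergence} requires, and it yields complete convergence of the coB\"okstedt spectral sequence to $\pi_*\Tot(\coTHH^\bullet(\Sigma_+^\infty X)\wedge Hk)$. The task then reduces to identifying this target with $H_*(\coTHH(\Sigma_+^\infty X); k)$; applying $H_*(-;k)$ to the Malkiewich equivalence immediately upgrades this to the isomorphism $H_*(\coTHH(\Sigma_+^\infty X); k) \cong H_*(\freeloops X; k)$ appearing in the statement.

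As flagged in the remark preceding the proposition, the identification of the target of convergence amounts to showing that the natural assembly map
\[P\colon \Tot(\coTHH^\bullet(\Sigma_+^\infty X)) \wedge Hk \to \Tot(\coTHH^\bullet(\Sigma_+^\infty X) \wedge Hk)\]
is an isomorphism on homotopy groups. I would establish this by exploiting that $\coTHH^\bullet(\Sigma_+^\infty X) = \Sigma_+^\infty X^{S^1_\bullet}$ is a cosimplicial suspension spectrum. By Malkiewich's theorem (invoked in the proof of Lemma \ref{lem:comultfreeloops}), after Reedy-fibrant replacement, totalization in spectra of a cosimplicial suspension spectrum of a simply connected cosimplicial space commutes with $\Sigma_+^\infty$. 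Hence the domain of $P$ is equivalent to $\Sigma_+^\infty\freeloops X \wedge Hk$, with homotopy groups $H_*(\freeloops X; k)$.

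For the codomain of $P$, I would identify the cosimplicial $Hk$-module spectrum $\Sigma_+^\infty X^{S^1_\bullet}\wedge Hk$ as computing the homology of the cosimplicial space $X^{S^1_\bullet}$. Its Bousfield--Kan spectral sequence is the homological Eilenberg--Moore spectral sequence for the pullback diagram
\[\xymatrix{\freeloops X \ar[r]\ar[d] & X\ar[d]^-{\triangle} \\ X \ar[r]^-{\triangle} & X\times X,}\]
which converges to $H_*(\freeloops X; k)$ precisely because $X$ is simply connected. A naturality check that $P$ induces a comparison between these two computations of $H_*(\freeloops X; k)$---compatible with both Bousfield--Kan spectral sequences at the Reedy-fibrant level---then confirms that $\pi_*(P)$ is an isomorphism.

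The main obstacle is the last step: verifying that the map $P$ genuinely realizes the identification between the two sides as the same Eilenberg--Moore-style computation of $H_*(\freeloops X; k)$, rather than merely producing two objects with abstractly isomorphic homotopy groups. Simple connectedness of $X$ is essential both for Malkiewich's totalization-commutes-with-$\Sigma_+^\infty$ equivalence on the domain side and for convergence of the Eilenberg--Moore spectral sequence on the codomain side; without it, either totalization could fail to compute the relevant object and the argument collapses.
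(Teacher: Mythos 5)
Your proposal is correct and follows exactly the route the paper indicates: the paper states this result as a recollection of \cite[Corollary 4.5]{BGHSZ}, and the remark preceding \cref{prop:bkconvergence} already reduces it to showing that the assembly map $P\colon \Tot(\coTHH^\bullet(\Sigma^\infty_+X))\wedge Hk \to \Tot(\coTHH^\bullet(\Sigma^\infty_+X)\wedge Hk)$ is a $\pi_*$-isomorphism, which you verify via Malkiewich's equivalence on the source and strong convergence of the homology Eilenberg--Moore spectral sequence (for simply connected $X$) on the target. The one step you flag as an obstacle---that $P$ intertwines the two identifications---follows from the observation that both Malkiewich's map and $P$ are instances of the canonical interchange map, so their composite is the canonical Eilenberg--Moore comparison map $Hk\wedge\Sigma^\infty_+\freeloops X\to \Tot(Hk\wedge\Sigma^\infty_+X^{S^1_\bullet})$.
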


In \cite{BGHSZ} the authors use the coB\"okstedt spectral sequence to compute the homology of free loop spaces $H_*(\freeloops X; k)$ for  instance when $X$ is $\mathbb{C}P^{\infty}, BU(n), BSU(n), BSp(n)$, or products of these. These calculations use the $k$-coalgebra structure on the coB\"okstedt spectral sequence.

In this section we use the $\Box$-Hopf algebra structure on the coB\"okstedt spectral sequence that we produced in \cref{sect:thespectralsequence} to carry out further computations of the homology of free loop spaces.  In particular, we consider spaces with exterior cohomology.  The (co)homology of free loop spaces of simply connected spaces with mod $p$ exterior cohomology has been considered, for instance, in \cite{KY97}, \cite{Kuribayashi11}, \cite{Smith84}, and \cite{KMN14}. Our approach yields new results, as consequences of the following general collapse result for the coB\"okstedt spectral sequence, which we prove later in this section.

\begin{thm}\label{exterior}
Let $k$ be a field of characteristic $p$ and let $C$ be a cocommutative coalgebra spectrum whose homology coalgebra is 
\[
H_*(C;k) = \Lambda_k(y_{i_1}, y_{i_2}, \dots, y_{i_n}).
\]
Here the $y_{i_j}$ are cogenerators in odd degrees, $|y_{i_j}| = i_j,$ and  $i_{j+1} \geq i_j\geq 3$. Then if  $\frac{i_n -2+ \sum_{j=1}^n i_j}{i_1-1}<p$, the coB\"okstedt spectral sequence for $\coTHH(C)$ collapses at $E_2$, and
\[
E_2 \cong E_{\infty} \cong \Lambda_{k}(y_{i_1}, y_{i_2}, \dots, y_{i_n}) \otimes k[w_{i_1}, w_{i_2}, \dots, w_{i_n}],
\] 
with $y_{i_j}$ in bidegree $(0, i_j)$ and $w_{i_j}$ in bidegree $(1, i_j)$. 
\end{thm}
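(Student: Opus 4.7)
The plan is to first identify the $E_2$-page via a K\"unneth argument and then use the $\Box_{H_*(C;k)}$-Hopf algebra structure from \cref{thrm:algebrass} to rule out every differential. Since $H_*(C;k)\cong\bigotimes_j\Lambda_k(y_{i_j})$ as $k$-coalgebras, iterating \cref{thrm:boxkunneth} and applying the single-generator computation \cref{prop:exteriorbialg} yields
\[
E_2 \;\cong\; \coHH_*(H_*(C;k)) \;\cong\; \Lambda_k(y_{i_1},\ldots,y_{i_n}) \otimes k[\sigma y_{i_1},\ldots,\sigma y_{i_n}],
\]
with $w_{i_j}:=\sigma y_{i_j}$ in bidegree $(1,i_j)$. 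This $E_2$-page is coflat over $\Lambda_k(y)$, being a tensor product of $\Lambda_k(y)$ with a polynomial algebra; subsequent pages will inherit coflatness from the collapse proved below, so \cref{thrm:algebrass} equips the spectral sequence with the structure of a spectral sequence of $\Box_{\Lambda_k(y)}$-Hopf algebras.

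Because $E_2$ is multiplicatively generated by the $y_{i_j}$ and $\sigma y_{i_j}$, the Leibniz rule reduces collapse to showing that each generator is a permanent cycle. The $y_{i_j}$ lie in the $0$-column of $E_2$, and the counit map $\coTHH(C)\to C$ induced by $\ast\to S^1_\bullet$ yields a map from the coB\"okstedt spectral sequence to the collapsed spectral sequence for $H_*(C;k)$ that is the identity on $E_2^{0,*}$. Hence $d_r(y_{i_j})=0$ for all $r$.

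The heart of the argument is to show $d_r(\sigma y_{i_j})=0$, which I would prove by induction on $r$, assuming $E_r\cong E_2$. By \cref{prop:primitive} with $C=\Lambda_k(y)$ and $D=k[\sigma y_{i_1},\ldots,\sigma y_{i_n}]$, each $\sigma y_{i_j}$ is $\Box_{\Lambda_k(y)}$-primitive; the coLeibniz rule of \cref{defn:diffbigradedboxcoalg}, together with the fact that the unit is a permanent cycle, then forces $d_r(\sigma y_{i_j})$ to be primitive as well. By \cref{prop:primitive} the primitives of $E_2$ form $\Lambda_k(y)\otimes P\bigl(k[\sigma y_{i_1},\ldots,\sigma y_{i_n}]\bigr)$, and in characteristic $p$ the Frobenius identity forces the primitives of the polynomial coalgebra to be the $k$-span of the $(\sigma y_{i_k})^{p^s}$. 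So $d_r(\sigma y_{i_j})$ is a linear combination of monomials $y^a\cdot(\sigma y_{i_k})^{p^s}$ in bidegree $(r+1,i_j+r-1)$, matching bidegrees only when $p^s=r+1$ and $|a|=i_j-2-(i_k-1)p^s$, with $0\leq|a|\leq\sum_\ell i_\ell$.

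Since $r\geq 2$ gives $p^s\geq p$, the hypothesis $(i_1-1)p > i_n-2+\sum_\ell i_\ell$ produces
\[
(i_k-1)p^s \;\geq\; (i_1-1)p \;>\; i_n-2+{\textstyle\sum_\ell}\, i_\ell \;\geq\; i_j-2+|a|,
\]
contradicting the equality $(i_k-1)p^s+|a|=i_j-2$ needed for a nonzero primitive target. Therefore $d_r(\sigma y_{i_j})=0$, every multiplicative generator is a permanent cycle, and $E_\infty\cong E_2$. The hardest step is the primitive identification: combining \cref{prop:primitive} with the characteristic-$p$ analysis of the polynomial Hopf algebra is what cuts the potential targets down to the tractable list $y^a(\sigma y_{i_k})^{p^s}$, after which the numerical hypothesis closes the argument uniformly over all $j$, $k$, and $s$.
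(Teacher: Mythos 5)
There is a genuine gap at the reduction step ``the Leibniz rule reduces collapse to showing that each generator is a permanent cycle.'' The only multiplicative structure available on the coB\"okstedt spectral sequence is the $\Box_{H_*(C;k)}$-algebra structure of \cref{thrm:algebrass}, not a $k$-algebra structure, and with respect to $\Box_{\Lambda_k(y)}$ the page $E_2\cong\Lambda_k(y_{i_1},\dots,y_{i_n})\otimes k[w_{i_1},\dots,w_{i_n}]$ is \emph{not} generated by the classes $y_{i_j}$ and $w_{i_j}=\sigma y_{i_j}$. By \cref{prop:indecomposable} the $\Box$-indecomposables are \emph{all} classes $y^a\otimes w_{i_j}$ with $y^a$ an arbitrary exterior monomial: the $\Box$-product of \cref{coHHbialg} only multiplies the polynomial factors while the exterior factor is carried along by the comodule structure, so $y^a\otimes w_{i_j}$ is not a product of elements of the augmentation ideal. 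Nor does the fact that $d_r$ is a $\Lambda_k(y)$-comodule map force $d_r(y^a\otimes w_{i_j})=0$ once $d_r(1\otimes w_{i_j})=0$; a comodule map out of the cofree comodule $\Lambda_k(y)\otimes k[w]$ is determined by freely chosen values on each $y^a\otimes w^b$ after projecting off the exterior factor. Consequently you must also rule out differentials supported on $y^a\otimes w_{i_j}$ for every monomial $y^a$, where the internal degree of the source can be as large as $i_n+\sum_\ell i_\ell$ --- and this is precisely where the full hypothesis $\frac{i_n-2+\sum_j i_j}{i_1-1}<p$ is needed. Your bidegree count treats only the case $y^a=1$ and in fact uses only the much weaker inequality $(i_1-1)p>i_n-2$, which is a sign that the hard cases have been skipped. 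The paper's proof is organized around exactly this point: via \cref{prop:differentials} the first nonzero differential in lowest total degree has source a $\Box$-indecomposable $y_{i_{j_1}}\dotsm y_{i_{j_m}}\otimes w_{i_j}$ and target a primitive, and the bidegree count is run with the full source degree $i_{j_1}+\dots+i_{j_m}+i_j$.

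A secondary, repairable issue: your argument that $d_r(y_{i_j})=0$ uses the counit map $\coTHH(C)\to C$, but a map of spectral sequences \emph{into} one concentrated in column $0$ gives no constraint on differentials leaving column $0$, since $d_r$ on column $0$ lands in column $r\geq 2$ where the comparison map is zero. The correct input is the unit map going the other way (induced by $S^1_\bullet\to\ast$), which exhibits column $0$ as the image of a spectral sequence with no differentials. Your identification of the possible targets as $\Box$-primitives via \cref{prop:primitive} is valid (the paper further restricts to $k$-coalgebra primitives, but the numerical argument does not require that), so once the sources $y^a\otimes w_{i_j}$ are included and the bidegree count is redone with the full hypothesis, the argument closes.
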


Letting $C= \Sigma_+^{\infty}X$, for $X$ a simply connected space with exterior cohomology, \cref{exterior} yields results on the homology of free loop spaces. We first consider spaces $X$ whose cohomology is exterior on two generators. This case has been considered previously in work of Kuribayashi and Yamaguchi \cite{KY97}, and we compare our results to that previous work in \cref{RemarkKY}. The following result follows directly from \cref{exterior}.
\begin{thm}\label{exteriorloop2}
Let $k$ be a field of characteristic $p$ and let $X$ be a simply connected space whose cohomology is exterior on two generators in odd degrees,
\[
H^*(X; k) \cong  \Lambda_k(x_{i_1}, x_{i_2}),
\]
$|x_{i_j}|=i_j$, and  $i_1 \leq i_{2} \leq \frac{p-1}{2} i_1 - \frac{p-1}{2}$. Then the homology of the free loop space on $X$ is given as a graded $k$-module by
\[
H_*(\freeloops X; k) \cong \Lambda_{k}(y_{i_1}, y_{i_2}) \otimes k[w_{i_1}, w_{i_2}],
\]
where $|y_{i_j}| = i_j$, and $|w_{i_j}| = i_j - 1$. 
\end{thm}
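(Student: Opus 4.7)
The plan is to apply \cref{exterior} directly to the case $C = \Sigma^\infty_+ X$ and use the convergence result of \cref{cor.conv.2} to pass from the $E_\infty$ page to the homology of $\freeloops X$.

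First I would identify the relevant coalgebra structure on $H_*(\Sigma^\infty_+ X; k) = H_*(X;k)$. Because $X$ is a space of finite type with $H^*(X;k)\cong \Lambda_k(x_{i_1},x_{i_2})$ on odd-degree generators, the homology $H_*(X;k)$ is the $k$-linear dual, and the coalgebra structure dual to the cup product is exterior on two cogenerators $y_{i_1}, y_{i_2}$ with $|y_{i_j}|=i_j$. Thus $C=\Sigma^\infty_+X$ satisfies the hypothesis of \cref{exterior} with $n=2$.

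Next I would verify that the numerical hypothesis on the degrees matches. For $n=2$, the condition $\frac{i_n-2+\sum_{j=1}^n i_j}{i_1-1}<p$ in \cref{exterior} specializes to $2i_2+i_1-2 < p(i_1-1)$, or equivalently $i_2 < \tfrac{p-1}{2}(i_1-1) + \tfrac{1}{2}$. Since $p$ is odd and $i_1$ is odd, $\tfrac{p-1}{2}(i_1-1)$ is an integer, so for integer $i_2$ this is the same as $i_2 \leq \tfrac{p-1}{2}(i_1-1) = \tfrac{p-1}{2}i_1 - \tfrac{p-1}{2}$, which is exactly the hypothesis of \cref{exteriorloop2}. (The case $p=2$ is excluded since then the upper bound forces $i_2 < i_1$.)

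Applying \cref{exterior} therefore yields a collapse at $E_2$ with
\[
E_\infty^{*,*} \cong \Lambda_k(y_{i_1},y_{i_2})\otimes k[w_{i_1},w_{i_2}],
\]
where $y_{i_j}$ has bidegree $(0,i_j)$ and $w_{i_j}$ has bidegree $(1,i_j)$. Collapse at a finite page automatically verifies the stationary hypothesis on $E_r^{s,s+i}$ of \cref{cor.conv.2}, so (using the identification $\coTHH(\Sigma^\infty_+ X)\simeq \Sigma^\infty_+\freeloops X$ from \cref{sec:freeloop} together with the usual argument that the natural map $P$ is an equivalence for suspension spectra of simply connected spaces, as in \cite{BGHSZ}) the spectral sequence converges completely to $H_*(\freeloops X;k)$. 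Reading off total degrees $t-s$ on the $E_\infty$ page gives the stated $k$-module isomorphism with $|y_{i_j}|=i_j$ and $|w_{i_j}|=i_j-1$. There is no real obstacle here; the only bookkeeping is the equivalence of the two forms of the numerical bound, and the statement is purely at the level of graded $k$-modules so no lifting of the multiplicative structure through the filtration is needed.
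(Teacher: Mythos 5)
Your proposal is correct and follows essentially the same route as the paper, which derives \cref{exteriorloop2} directly from the collapse result \cref{exterior} applied to $C=\Sigma^\infty_+X$ together with the convergence statement \cref{cor.conv.2}. Your careful check that the bound $\frac{2i_2+i_1-2}{i_1-1}<p$ is equivalent (for odd $p$ and integer degrees) to $i_2\leq\frac{p-1}{2}i_1-\frac{p-1}{2}$ is exactly the bookkeeping the paper leaves implicit, and the hypothesis $i_1\geq 3$ of \cref{exterior} holds automatically since $X$ is simply connected and the $i_j$ are odd.
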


\begin{rem}\label{RemarkKY}
Kuribayashi and Yamaguchi \cite{KY97} compute using different methods the cohomology of free loop spaces $H^*(\freeloops X;\mathbb{Z}/p)$ for $X$ simply connected with mod $p$ cohomology isomorphic to $\Lambda(x_{i_1}, x_{i_2})$, where $i_1 \leq i_2 \leq 2i_1 - 2$, and $p>3$. Dualizing their result yields $H_*(\freeloops X; \mathbb{Z}/p) \cong \Lambda(y_{i_1}, y_{i_2}) \otimes k[w_{i_1}, w_{i_2}]$ with the generators $y_i$ and $w_i$ in the degrees indicated in the theorem above. When $p=5$ our statement applies in the same range of degrees as the Kuribayashi and Yamaguchi result. For $p>5$, though, our statement applies in a much broader range than the statement in \cite{KY97}, and provides an extension of their result. Note, however, that in the current work we compute $H_*(\freeloops X;k)$ as a $k$-module. In  \cite{KY97}  they compute the algebra structure on $H^*(\freeloops X; \mathbb{Z}/p)$, in addition to the module structure. We expect that the isomorphism in \cref{exteriorloop2} holds as an isomorphism of coalgebras, but this will be addressed in subsequent work.   \end{rem}

As noted above, our work greatly expands the range in which we can understand the homology of the free loops on a space whose mod $p$ cohomology is exterior on two generators.  We next consider spaces whose cohomology is exterior on more than two generators. When $p=2$, this was studied in work of Smith \cite{Smith84}. For $p>2$, while the homology of free loop spaces for spaces with exterior cohomology with one or two generators had been studied in past work, previous techniques did not easily extend beyond the case of two generators. One advantage of the new approach presented here is that it yields results in much greater generality. Indeed, using our new approach we are able to prove the following general result for spaces with exterior cohomology with $n$ generators.
\begin{thm}\label{exteriorloop}
Let $k$ be a field of characteristic $p$ and let $X$ be a simply connected space whose cohomology is exterior on a finite number of generators
\[
H^*(X; k) \cong  \Lambda_k(x_{i_1}, x_{i_2}, \dots, x_{i_n}),
\]
where the $x_{i_j}$ are generators in odd degrees, $|x_{i_j}|=i_j$, and  $i_{j+1} \geq i_j$. Then when $\frac{i_n + \sum_{j=1}^n i_j}{i_1-1} \leq p$, the homology of the free loop space on $X$ is given as a graded $k$-module by
\[
H_*(\freeloops X; k) \cong \Lambda_{k}(y_{i_1}, y_{i_2}, \dots, y_{i_n}) \otimes k[w_{i_1}, w_{i_2}, \dots, w_{i_n}],
\]
where $|y_{i_j}| = i_j$, and $|w_{i_j}| = i_j - 1$. 
\end{thm}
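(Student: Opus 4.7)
The plan is to obtain \cref{exteriorloop} as a direct application of \cref{exterior} to the suspension spectrum $C=\Sigma^\infty_+ X$, combined with the convergence result \cref{cor.conv.2}. First I would identify the homology coalgebra of $\Sigma^\infty_+ X$. Since $H^*(X;k)$ is a finite-dimensional exterior algebra on odd-degree generators, it is self-dual as a Hopf algebra over the field $k$, so
\[
H_*(\Sigma^\infty_+ X;k) \cong H_*(X;k) \cong \Lambda_k(y_{i_1}, y_{i_2}, \ldots, y_{i_n})
\]
as cocommutative $k$-coalgebras, with $|y_{i_j}|=i_j$. Because $X$ is simply connected, $H^1(X;k)=0$, so every odd generator has degree $i_j \geq 3$. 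Thus $\Sigma^\infty_+ X$ is a cocommutative coalgebra spectrum whose homology fits the setup of \cref{exterior}.

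Next I would verify the numerical hypothesis of \cref{exterior}. Since $i_1 \geq 3$, we have $\frac{2}{i_1-1} > 0$, and our assumption $\frac{i_n+\sum_{j=1}^n i_j}{i_1-1}\leq p$ gives
\[
\frac{i_n-2+\sum_{j=1}^n i_j}{i_1-1} = \frac{i_n+\sum_{j=1}^n i_j}{i_1-1} - \frac{2}{i_1-1} \leq p - \frac{2}{i_1-1} < p,
\]
which is the strict inequality required. Invoking \cref{exterior}, the coB\"okstedt spectral sequence for $\coTHH(\Sigma^\infty_+ X)$ collapses at $E_2$, and
\[
E_\infty \cong E_2 \cong \Lambda_k(y_{i_1}, \ldots, y_{i_n}) \otimes k[w_{i_1}, \ldots, w_{i_n}],
\]
with $y_{i_j}$ in bidegree $(0,i_j)$ and $w_{i_j}$ in bidegree $(1,i_j)$, hence contributing to total degrees $i_j$ and $i_j-1$ respectively, as required.

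Finally I would deduce the statement about $H_*(\freeloops X;k)$. Collapse at $E_2$ trivially implies that for every $(s,i)$ there exists $r$ (indeed $r=2$) with $E_r^{s,s+i}=E_\infty^{s,s+i}$, so the hypothesis of \cref{cor.conv.2} is satisfied, and the spectral sequence converges completely to $H_*(\coTHH(\Sigma^\infty_+ X);k) \cong H_*(\freeloops X;k)$ via the Malkiewich--Hess--Shipley equivalence $\coTHH(\Sigma^\infty_+ X)\simeq \Sigma^\infty_+ \freeloops X$. Since we only claim an isomorphism of graded $k$-modules, no extension problem arises: the associated graded has the same total dimension in each degree as $H_*(\freeloops X;k)$, yielding the claimed isomorphism.

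The genuine technical content of \cref{exteriorloop} is therefore entirely packaged inside \cref{exterior}, whose role is to rule out nontrivial differentials $d_r$ for $r\geq 2$ and to exclude hidden extensions among the exterior and polynomial generators. The numerical hypothesis $\frac{i_n-2+\sum i_j}{i_1-1}<p$ is precisely what guarantees this, by ensuring the total internal degrees of potential targets of differentials on the polynomial classes $w_{i_j}$ stay below the degree where a $p$-th power could first interact (analogous to the role of the corresponding bound in the Angeltveit--Rognes analysis of the classical B\"okstedt spectral sequence). Beyond invoking \cref{exterior}, the present argument is a hypothesis check and a convergence statement.
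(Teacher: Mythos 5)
Your proposal is correct and matches the paper's (implicit) argument: the paper derives \cref{exteriorloop} directly from \cref{exterior} applied to $C=\Sigma^\infty_+X$ together with the convergence statement of \cref{cor.conv.2}, exactly as you do. Your added details---the self-duality identification $H_*(X;k)\cong\Lambda_k(y_{i_1},\dots,y_{i_n})$ as coalgebras, the observation that simple connectivity forces $i_j\geq 3$, and the check that $\frac{i_n+\sum i_j}{i_1-1}\leq p$ implies the strict bound $\frac{i_n-2+\sum i_j}{i_1-1}<p$---are all sound and simply make explicit what the paper leaves to the reader.
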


\begin{ex}
For appropriate choices of $p$, \cref{exteriorloop} recovers calculations of the homology of free loop spaces of $SU(n), Sp(n), G_2, F_4, E_6, E_7,$ and $E_8$. 
\end{ex}
In the case where the cohomology of $X$ is exterior on one generator in odd degree, our techniques yield a stronger result. We consider this case in \cref{exterior1} later in this section.

The approach to proving the above results is to use the coB\"okstedt spectral sequence for $\coTHH(\Sigma_+^{\infty}X)$ and exploit the additional algebraic structure developed in \cref{sect:thespectralsequence} for the coB\"okstedt spectral sequence. The following proposition will be very useful. 

\begin{prop}\label{prop:differentials}
Let $C$ be a cocommutative coalgebra spectrum such that $H_*(C;k)$ is connected and $\coHH(H_*(C;k))$ is coflat over $H_*(C;k)$. Then the $E_2$-term of the coB\"okstedt spectral sequence for $\coTHH(C)$, 
\[
E_2^{*,*}(C) = \coHH_*(H_*(C; k)),
\]
is a $\Box_{H_*(C; k)}$-bialgebra, and the shortest non-zero differential $d_r^{s,t}$ in lowest total degree $s+t$, maps from a $\Box_{H_*(C;k)}$-algebra indecomposable to a $\Box_{H_*(C;k)}$-coalgebra primitive. 
\end{prop}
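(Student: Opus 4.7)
The plan has two stages: first establish the $\Box_{H_*(C;k)}$-bialgebra structure on $E_2$, and then extract the indecomposable/primitive claim via a standard Hopf-algebraic argument using the Leibniz and coLeibniz rules. For the first stage I would apply \cref{coHHbialg2} with $D = H_*(C;k)$; the hypothesis provides the coflatness of $\coHH_*(H_*(C;k))$ over $H_*(C;k)$ required for that proposition, and it endows $E_2^{*,*} = \coHH_*(H_*(C;k))$ with the desired $\Box_D$-bialgebra structure.

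For the differential statement, let $d_r$ be the shortest nonzero differential and let $x \in E_r^{s,t}$ be an element of minimal total degree $s+t$ with $d_r(x) \neq 0$. Since $d_2,\dots,d_{r-1}$ all vanish, $E_j^{*,*} = E_2^{*,*}$ as $D$-comodules for $2 \leq j \leq r$, so the coflatness of $E_2$ propagates to each $E_j$ in this range. Consequently \cref{thrm:algebrass} applies through the $r$-th page and $d_r$ satisfies both the Leibniz and coLeibniz rules with respect to the $\Box_D$-multiplication and $\Box_D$-comultiplication on $E_r$.

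For the indecomposable claim, I would argue by contradiction. Suppose $x$ lies in the image of $\mu\colon IE_r \Box_D IE_r \to IE_r$, so that $x$ may be written as a sum of terms $\mu(a_i \Box b_i)$ with $a_i, b_i \in IE_r$ of strictly smaller total degree than $x$. The minimality of $s+t$ forces $d_r(a_i) = d_r(b_i) = 0$, and the Leibniz rule then gives $d_r(x) = 0$, a contradiction. Hence $x$ represents a nonzero class in $QE_r$.

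For the primitive claim, set $y = d_r(x)$; since $d_r$ strictly raises filtration, $y \in IE_r$. Using the lemma following \cref{def:primitive}, write $\Delta(x) = x \Box 1 + 1 \Box x + \sum_i x_i' \Box x_i''$ with $x_i', x_i'' \in IE_r$ of strictly smaller total degree than $x$. Applying the coLeibniz rule to $\Delta(x)$, the $x \Box 1$ and $1 \Box x$ summands contribute $y \Box 1$ and $1 \Box y$ respectively (the relevant signs reduce to $+1$ because $1$ has bidegree zero), while each $x_i' \Box x_i''$ summand vanishes by the minimality hypothesis on $x$. Thus $\Delta(y) = y \Box 1 + 1 \Box y$, so $y$ is $\Box_D$-primitive. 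The only delicate point is applying \cref{thrm:algebrass} under the weaker assumption that only $E_2$ is coflat; this is handled by the observation that $E_r = E_2$ before the first nonzero differential, after which the rest of the argument is standard Hopf-algebraic bookkeeping transplanted to the $\Box_D$ setting.
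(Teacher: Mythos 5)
Your proposal is correct and follows essentially the same route as the paper: obtain the $\Box_{H_*(C;k)}$-bialgebra structure on $E_2$ from the coHochschild structure results, then use the Leibniz rule plus minimality of total degree to rule out a decomposable source, and the coLeibniz rule plus minimality to force the target to be primitive. Your extra remark that coflatness propagates to $E_j$ for $2\leq j\leq r$ because $E_j=E_2$ before the first nonzero differential (so that the Hopf-structure theorem applies through page $r$) is a point the paper leaves implicit, but it does not change the argument.
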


\begin{proof}
As seen in \cref{coHHbialg}, $\coHH_*(H_*(C;k))$ is a $\boxbialg{H_*(C;k)}$.  Suppose $d_1 = d_2 = \dots= d_{r-1} = 0.$ We then consider what happens on $E_r^{*,*}(C)$. Note that $E_r^{*,*}(C) = E_2^{*,*}(C)$ is still a $\boxbialg{H_*(C)}$, and the differential $d_r$ satisfies both the Leibniz rule:
\[
d_r \circ \mu = \mu \circ (d_r \Box id \pm id \Box d_r), 
\]
and the coLeibniz rule:
\[
\triangle \circ d_r = (d_r \Box id \pm id \Box d_r) \circ \triangle.
\]
Here $\Box$ denotes $\Box_{H_*(C;k)}$. Suppose $xy$ is decomposable and $d_r(xy) \neq 0$. Then by the Leibniz formula one of $d_r(x)$ or $d_r(y)$ is nonzero, so there is a class in lower total degree with a nonzero differential.  Now consider any $w \in E_r^{*,*}(C) $. We can write the comultiplication on $w$ as
\[
\triangle(w) = w \Box 1 + 1 \Box w + {\textstyle\sum_i} w_i^{(1)} \Box w_i^{(2)}.
\]
Suppose $d_r(w)$ is not primitive. Then the comultiplication on $d_r(w)$ can be written
\[
\triangle(d_r(w)) = d_rw \Box 1 + 1 \Box d_rw + {\textstyle\sum_i} (d_rw)_i^{(1)} \Box (d_rw)_i^{(2)},
\]
where at least one of the terms $(d_rw)_i^{(1)} \Box (d_rw)_i^{(2)}$ is nonzero. By the coLeibniz rule it then follows that at least one of the terms $d_r(w_i^{(1)})$ or $d_r(w_i^{(2)})$ must be nonzero. Therefore there is a class in lower total degree with a nonzero differential. 
\end{proof}

We now have the pieces in place to prove \cref{exterior}.

\begin{proof}[Proof of \cref{exterior}]
We consider the coB\"okstedt spectral sequence computing $H_*(\coTHH(C);k)$. This spectral sequence has $E_2$-term
\[
E_2 \cong \coHH_{*}(H_*(C;k)) \cong \coHH_*(\Lambda_k(y_{i_1}, y_{i_2}, \dots, y_{i_n})).
\]
This coHochschild homology was computed in \cite{BGHSZ}, Proposition 5.1, so we have
\[
E_2 \cong \coHH_{*}(H_*(C;k)) \cong \Lambda_{k}(y_{i_1}, y_{i_2}, \dots, y_{i_n}) \otimes k[w_{i_1}, w_{i_2}, \dots, w_{i_n}],
\] 
where $w_j$ is in degree $(1, j)$. This coalgebra is coflat over the exterior algebra $\Lambda_{k}(y_{i_1}, y_{i_2}, \dots, y_{i_n})\cong H_*(C;k)$, so \cref{prop:differentials} applies, and the shortest non-zero differential, from lowest total degree, maps from a $\Box_{H_*(C;k)}$-indecomposable to a $\Box_{H_*(C;k)}$-primitive. The $\Box_{H_*(C;k)}$-coalgebra structure that \cref{coHHbialg} gives on the $E_2$-page $H_*(C;k) \otimes k[w_{i_1}, w_{i_2}, \dots, w_{i_n}]$ agrees with the one given in \cref{prop:primitive}, and hence \cref{prop:primitive} 
describes the $\Box_{H_*(C;k)}$-primitive elements in $\coHH_{*}(H_*(C;k))$.  However, from \cite{BGHSZ}, we know that the coB\"okstedt spectral sequence is also a spectral sequence of $k$-coalgebras. It follows that the shortest non-zero differential, in lowest total degree, maps to a $k$-coalgebra primitive. We can see from \cref{prop:primitive} that there are fewer primitive elements of $\coHH_*(H_*(C;k))$ viewed as a $k$-coalgebra than viewed as a $\Box_{H_*(C;k)}$-coalgebra. Let $p$ denote the characteristic of $k$. Then the primitive elements of $\coHH_*(H_*(C;k))$ as a $k$-coalgebra are the elements of the form $y_j \otimes 1$ and $1 \otimes w_j^{p^m}$. The elements $y_j \otimes 1$ are in bidegree $(0, j)$ and hence cannot be hit by a differential. Similarly, the elements $1 \otimes w_j$ are in bidegree $(1, j)$ and cannot be hit by a differential.  So the first non-zero differential, if one exists, has to hit some $1 \otimes w_j^{p^m}$, for $m\geq 1$.

We argue that for large enough values of $p$, these classes cannot be hit by a differential from a $\Box_{H_*(C;k)}$-indecomposable. In other words, if the characteristic of $k$ is large enough, this spectral sequence will collapse. In \cref{prop:indecomposable} below we establish that the $\Box_{H_*(C;k)}$-indecomposable elements of $\coHH_*(H_*(C;k))$ are the elements of the form $x \otimes w_i$, where $x\in \Lambda_{k}(y_{i_1}, y_{i_2}, \dots, y_{i_n})$. Consider an indecomposable element $y_{i_{j_1}}\!\!\dotsm y_{i_{j_m}}\otimes w_{i_j}$ and suppose $d_r(y_{i_{j_1}}\!\!\dotsm y_{i_{j_m}}\otimes w_{i_j} )$ is $1 \otimes w_{i_a}^{p^b}$. Note that the bidegree of $d_r(y_{i_{j_1}}\!\!\dotsm y_{i_{j_m}}\otimes w_{i_j} )$ is $(1+r, i_{j_1} + \dots+ i_{j_m} + i_j +r-1)$.  By comparing bidegrees, 
\[
1+r = p^b \text{\qquad and \qquad}
i_{j_1} + \dots+ i_{j_m} + i_j +r-1 = i_ap^b.
\]
So in particular,
\[
i_{j_1} + \dots+ i_{j_m} + i_j -2 =  (i_a-1)p^b.
\]
Since $i_a \neq 1$ it follows that 
\[
p^b = \frac{i_{j_1} + \dots +i_{j_m} + i_j -2}{(i_a-1)}.
\]
Therefore 
\[
p \leq p^b = \frac{i_{j_1} + \dots +i_{j_m} + i_j -2}{(i_a-1)} \leq \frac{i_n -2+ \sum_{j=1}^n i_j}{i_1-1}.
\]
\end{proof}

To complete the proof of \cref{exterior} we prove the following lemma, which identifies the $\Box_{H_*(C;k)}$- indecomposable elements.

\begin{prop}\label{prop:indecomposable}
For a field $k$, the indecomposable elements of the $\Box_{\Lambda_{k}(y_1, y_2, \ldots y_n)}$-algebra 
\[\coHH_*(\Lambda_{k}(y_1, y_2, \ldots y_n)) \cong \Lambda_{k}(y_1, y_2, \dots, y_n) \otimes k[w_1, w_2, \dots, w_n]
\] are of the form $x \otimes w_i$ for any $x\in \Lambda_{k}(y_1, y_2, \dots, y_n)$. 
\end{prop}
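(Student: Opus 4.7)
The plan is to compute the $\Box_D$-multiplication on $\coHH_*(D) \cong D \otimes k[w_1, \ldots, w_n]$ (with $D = \Lambda_k(y_1, \ldots, y_n)$) explicitly, and then read off the indecomposables directly from \cref{def:indecomposable}. Writing $V = k[w_1, \ldots, w_n]$ with trivial $D$-coaction, the $D$-bicomodule structure on $\coHH_*(D) = D \otimes V$ is carried entirely by the $D$-factor via the comultiplication of $D$. Using the standard fact $D \Box_D D \cong D$ (since $D$ is the unit of $\Box_D$), I would first exhibit an isomorphism
\[ \Phi\colon D \otimes V \otimes V \xto{\cong} \coHH_*(D) \Box_D \coHH_*(D), \qquad d \otimes v_1 \otimes v_2 \mapsto {\textstyle\sum} (d^{(1)} \otimes v_1) \otimes (d^{(2)} \otimes v_2), \]
in Sweedler notation $\triangle(d) = \sum d^{(1)} \otimes d^{(2)}$; the equalizer condition holds by coassociativity, and an inverse is recovered using the counit.

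Under $\Phi$, the $\Box_D$-multiplication $\mu$ from \cref{coHHbialg} should satisfy
\[ \mu \circ \Phi(d \otimes v_1 \otimes v_2) = d \otimes (v_1 \cdot v_2), \]
where $v_1 \cdot v_2$ denotes polynomial multiplication in $k[w_1, \ldots, w_n]$. To verify this, I would combine the cochain-level product formula from the proof of \cref{coHHbialg} (which applies $\epsilon$ to the leading factor of the second summand and concatenates the remaining cochains) with the counit identity $\sum \epsilon(d^{(2)}) d^{(1)} = d$. The single-generator case is exactly \cref{prop:exteriorbialg}; the general multi-generator case then follows either by directly extending via the shuffle product on the normalized complex $N^*(\coHH^\bullet(D))$, or more cleanly by using a Künneth-style decomposition $\coHH_*(D) \cong \bigotimes_j \coHH_*(\Lambda_k(y_j))$ as $\Box_D$-algebras (noting that $\coHH_*$ sends tensor products of coalgebras to tensor products of $\Box$-algebras).

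With these in place, the computation of indecomposables is immediate. The counit $\epsilon\colon \coHH_*(D) \to D$ projects onto the $0$th cosimplicial level, so the augmentation ideal is $IA = D \otimes V_+$, where $V_+ \subset k[w_1, \ldots, w_n]$ is the subspace of strictly positive polynomial degree. Under $\Phi$, the subspace $IA \Box_D IA$ corresponds to $D \otimes V_+ \otimes V_+$, so the explicit formula for $\mu$ gives
\[ \mu(IA \Box_D IA) = D \otimes (V_+ \cdot V_+) = D \otimes \mathrm{span}_k\{w^\gamma : |\gamma| \geq 2\}. \]
Hence $QA = IA / \mu(IA \Box_D IA) \cong D \otimes \mathrm{span}_k\{w_1, \ldots, w_n\}$, whose $k$-basis consists precisely of the claimed elements $x \otimes w_i$. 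The main obstacle is the second step, pinning down the explicit formula for $\mu$ in the multi-generator setting, since shuffle signs from the odd-degree generators must be tracked carefully; the cleanest route is the Künneth reduction to the single-generator case of \cref{prop:exteriorbialg}, where the needed formula $\mu((\sigma y)^q \otimes (\sigma y)^s) = (\sigma y)^{q+s}$ is already established.
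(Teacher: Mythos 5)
Your proposal is correct and follows essentially the same route as the paper: both identify $I\Box_D I$ with $D\otimes V_+\otimes V_+$ via the comultiplication-and-twist isomorphism, recognize the $\Box_D$-multiplication as $\id\otimes\mu_D$ acting only on the polynomial factor, and read off the cokernel as $D\otimes\mathrm{span}_k\{w_1,\dots,w_n\}$. The extra care you propose (verifying the formula for $\mu$ via the counit identity, or reducing to the one-generator case of \cref{prop:exteriorbialg}) is just a more explicit version of what the paper asserts follows from the definition in \cref{coHHbialg}.
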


\begin{proof}
Let $C$ denote the coalgebra $\Lambda_{k}(y_1, y_2, \ldots, y_n)$.  Let $D$ denote the coalgebra $k[w_1, w_2, \ldots ,w_n]$. Recall from \cref{coHHbialg} the $\Box_{C}$-algebra structure on $\coHH_*(C) \cong C \otimes D$. In order to identify the indecomposable elements in this $\Box_{C}$-algebra, we first need to identify the augmentation ideal. The augmentation 
\[
\epsilon\colon C \otimes D \to C
\]
is given by the composite
\[
C \otimes D \xto{\id \otimes \epsilon_D}  C \otimes k \xto{\ \cong\ }  C,
\]
where $\epsilon_D$ is the counit for $D$. This is the map $\epsilon_D\colon k[w_1, w_2, \dots, w_n] \to k$ that sends all of the $w_i$ to zero. Let $\overline{D}$ denote the kernel of $\epsilon_D$. Since we are working over a field $k$, the augmentation ideal $I(C\otimes D)$ is then given by $I(C\otimes D) = C \otimes \overline{D}$. The indecomposable elements $Q(C\otimes D)$ are determined by the exact sequence
\[
I(C\otimes D) \Box_C I(C\otimes D) \xto{\ \mu\ }  I(C\otimes D) \longrightarrow Q(C\otimes D) \longrightarrow 0.
\]
Here we view the map $\mu$ as a composite 
\[
I(C\otimes D) \Box_C I(C\otimes D) \xhookrightarrow{\quad} 
  (C\otimes D) \Box_C (C\otimes D) \xto{\mu_{C\otimes D}}  I(C\otimes D), 
\]
where $\mu_{C\otimes D}$ is the product as in \cref{coHHbialg}. We rewrite the exact sequence as 
\[
(C \otimes \overline{D}) \Box_C (C \otimes \overline{D}) \xto{\ \mu\ } C \otimes \overline{D} \longrightarrow Q(C\otimes D) \longrightarrow  0.
\]
Note that 
\[(C \otimes \overline{D}) \Box_C (C \otimes \overline{D})  \cong C \otimes \overline{D} \otimes \overline{D}.
\]
This isomorphism is induced by the comultiplication followed by a twist:
\[C\otimes \overline{D}\otimes \overline{D}\xto{\triangle\otimes 1\otimes 1} C\otimes C\otimes \overline{D}\otimes \overline{D}\xto{\id \otimes \tau\otimes \id} C \otimes \overline{D} \otimes C\otimes \overline{D}.\]
The box product $(C\otimes \overline{D})\Box_C(C\otimes \overline{D})$ is identified with the image of this composite inside the 4-fold tensor product.   It follows from the definition of the $\Box_C$-multiplication in \cref{coHHbialg} that under this isomorphism the multiplication $\mu$ is given by 
\[\mu\colon C \otimes \overline{D} \otimes \overline{D} \xto{\id \otimes \mu_D} C \otimes \overline{D},
\]
where $\mu_D$ denotes the multiplication on the classes $w_i^j$ described in \cref{prop:exteriorbialg}. The cokernel of this map is then given by
\[ Q(C\otimes D)=C\otimes \coker(\mu_D),\] 
that is, all elements of the form $x\otimes w_i$.
\end{proof}

In the case where the cohomology is exterior on only one generator, we have the stronger result that the spectral sequence always collapses, regardless of the characteristic of $k$. This result was also proven in \cite{KY97} using a different approach. 
\begin{thm}\label{exterior1}
Let $X$ be a simply connected space whose cohomology is exterior on one generator in odd degree
\[
H^*(X; k) \cong  \Lambda_k(x).
\]
Then the homology of the free loop space on $X$ is given as a graded $k$-module by
\[
H_*(\freeloops X; k) \cong \Lambda_{k}(y) \otimes k[w],
\]
where $|y| = |x|$, and $|w| = |x| - 1$. 
\end{thm}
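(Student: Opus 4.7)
The plan is to set up the coB\"okstedt spectral sequence for $C=\Sigma^\infty_+X$ and show that it collapses at $E_2$ in every characteristic, improving on \cref{exterior} by removing any upper bound on $p$ in the single-generator case.

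First, I would compute the $E_2$-page. Dualizing $H^*(X;k)\cong \Lambda_k(x)$ gives $H_*(X;k)\cong\Lambda_k(y)$ as $k$-coalgebras with $|y|=|x|=:i$, and \cref{prop:exteriorbialg} identifies
\[
E_2^{*,*} \cong \coHH_*(\Lambda_k(y)) \cong \Lambda_k(y)\otimes k[w]
\]
as $\Box_{\Lambda_k(y)}$-bialgebras, with $y$ in bidegree $(0,i)$ and $w=\sigma y$ in bidegree $(1,i)$. Since $\Lambda_k(y)\otimes k[w]$ is coinduced over $\Lambda_k(y)$, it is coflat, so \cref{prop:differentials} applies and tells me that the shortest nontrivial differential in lowest total degree must carry a $\Box_{\Lambda_k(y)}$-algebra indecomposable to a $\Box_{\Lambda_k(y)}$-coalgebra primitive. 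Exactly as in the proof of \cref{exterior}, I would combine this with the $k$-coalgebra structure of the spectral sequence from \cite{BGHSZ} to insist additionally that the target be a $k$-coalgebra primitive.

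Next, I would identify the relevant source and target classes. By \cref{prop:indecomposable}, the $\Box_{\Lambda_k(y)}$-indecomposables are spanned by $w$ and $yw$, in bidegrees $(1,i)$ and $(1,2i)$; the standard primitive decomposition for tensor products of connected graded coalgebras over a field produces the $k$-coalgebra primitives of $\Lambda_k(y)\otimes k[w]$ as the span of $y$ in bidegree $(0,i)$ and $1\otimes w^{p^m}$ in bidegree $(p^m,ip^m)$ for $m\geq 0$. The classes $y$ and $w$ in filtrations $0$ and $1$ cannot be hit by any $d_r$ with $r\geq 2$, so only the targets $1\otimes w^{p^m}$ with $m\geq 1$ remain. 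Matching bidegrees against the two possible sources gives $p^m(i-1)=i-2$ (source $w$) and $p^m=2$ with $r=1$ (source $yw$). The first has no positive integer solution for $i\geq 3$, and the second is excluded because $r\geq 2$. Hence there are no nontrivial differentials, and the spectral sequence collapses at $E_2$.

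Finally, collapse trivially supplies the hypothesis of \cref{cor.conv.2}, producing a filtration on $H_*(\coTHH(\Sigma^\infty_+X);k)\cong H_*(\freeloops X;k)$ whose associated graded is $\Lambda_k(y)\otimes k[w]$; reading bidegrees gives $|y|=i$ and $|w|=i-1$, yielding the stated graded $k$-module isomorphism. The main obstacle is not conceptual but arithmetical: the bidegree bookkeeping in the third paragraph must rule out every prime $p$ uniformly, in contrast to the multi-generator case of \cref{exterior}, where the characteristic bound was genuinely needed to close out the argument.
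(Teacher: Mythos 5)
Your proposal is correct and follows essentially the same route as the paper: the paper's proof of \cref{exterior1} simply specializes the bidegree arithmetic from the proof of \cref{exterior} (obtaining $p^b \le \tfrac{2i-2}{i-1}=2$ and $1+r=p^b$, hence $r=1<2$), which is exactly your case analysis on the two indecomposable sources $w$ and $yw$. The only difference is that you spell out the primitive/indecomposable identification and the degree bookkeeping explicitly rather than citing it from the multi-generator argument.
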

\begin{proof} This follows directly from the proof of \cref{exterior}. In the case of one exterior generator in degree $i$, the formulas in the proof of \cref{exterior} show that in order for a nonzero differential $d_r$, $r\geq 2$, to exist in the spectral sequence, the following formulas need to be satisfied:  
\[
p^b \leq \frac{2i - 2}{i-1} = 2 \hspace{1cm} \textup{and} \hspace {1cm} 1+r = p^b.
\]
These formulas imply that $r<2$, so there is no such differential and the spectral sequence collapses. 
\end{proof}

\begin{rem}
\cref{exterior1} computes in particular the homology of free loop spaces of spheres, $H_*(\mathcal{L}S^n; k)$, for $n>1,$ odd. The homology of free loop spaces of spheres was computed classically by Ziller \cite{Ziller77}, using Morse theory, and we recover some of those results here. 
\end{rem}
\begin{rem}
A result using similar proof techniques to \cref{exterior1} is proven by Klanderman in \cite{Klanderman20}. Klanderman uses the relative coB\"okstedt spectral sequence to compute $\pi_*(\coTHH^{Hk}(C))$ for $C$ a cocommutative $Hk$-coalgebra spectrum with $\pi_*(C) \cong \Lambda_k(y)$, for $|y|>1$, odd.  
\end{rem}

\bibliography{coTHH}
\bibliographystyle{plain}

\end{document}